\numberwithin{equation}{section}
\newcommand{\N}{\mathbb N}
\newcommand{\Z}{\mathbb Z}
\newcommand{\Q}{\mathbb Q}
\newcommand{\R}{\mathbb R}
\def\E{\mathbb E}
\def\P{\mathbb P}
\newtheorem{thm}{Theorem}[section]
\newtheorem{lem}[thm]{Lemma}
\newtheorem{cor}[thm]{Corollary}
\newtheorem{prop}[thm]{Proposition}
\theoremstyle{definition}
\def\smallnegint{\mathop{\int\mkern-13mu
        \raise.5ex\hbox{${\scriptscriptstyle\diagup}$}}\nolimits}
\def\ds{\displaystyle}
\def\ep{\varepsilon}
\def\ssetminus{\,\raise.4ex\hbox{$\scriptstyle\setminus$}\,}
\def \lg{\langle}
\def \rg{\rangle}
\newcommand{\be}{\begin{equation}}
\newcommand{\ee}{\end{equation}}
\def\QED{\begin{flushright}$\Box$\end{flushright}}
\begin{document}
\title{Homogenization and enhancement of the $G-$equation in random environments}
\author[Pierre Cardaliaguet and Panagiotis E. Souganidis]
{Pierre Cardaliaguet \and Panagiotis E. Souganidis}

\address{Ceremade, Universit\'e Paris-Dauphine,
Place du Maréchal de Lattre de Tassigny, 75775 Paris cedex 16 - France}
\email{cardaliaguet@ceremade.dauphine.fr }
\address{Department of Mathematics, University of Chicago, Chicago, Illinois 60637, USA}
\email{souganidis@math.uchicago.edu}
\thanks{Souganidis was partially supported by the National Science
Foundation. Cardaliaguet was partially supported by the ANR (Agence Nationale de la Recherche) KAMFAIBLE project  (BLAN07-3-187245).
}
\dedicatory{Version: \today}

\begin{abstract}
We study the homogenization of a  $G$-equation which is advected by a divergence free stationary vector field in a general ergodic random environment. We prove that the averaged equation
is an anisotropic deterministic G-equation and we give necessary and sufficient conditions in order to have enhancement. Since the problem is not assumed to be coercive it is not possible to have uniform bounds for the solutions. In addition, as we show, the associated minimal (first passage) time function does not satisfy, in general, the uniform integrability condition which is necessary to apply the sub-additive ergodic theorem.
We overcome these obstacles by (i) establishing a new reachability (controllability) estimate for the minimal function
and (ii) constructing, for each direction and almost surely, a random sequence which has both a long time averaged limit (due to the sub-additive
ergodic theorem) and  stays (in the same sense) asymptotically close to the minimal time.

\end{abstract}

\maketitle      

\section{Introduction}

We study the homogenization limit (averaged behavior), as $\ep\to0$, of the solution to the so-called $G-$equation
\be\label{eq:uep}
\left\{\begin{array}{l}
u_t^\ep=|Du^\ep|+\lg V(\frac{x}{\ep},\omega),Du^\ep \rg  \quad {\rm in } \quad \R^N\times (0,T),\\[2mm]

u^\epsilon = u_0 \quad \text { on } \quad \R^N\times\{0\}\ ,
\end{array}\right.\ee
set in a general stationary ergodic environment. We work in the context of viscosity solutions. Here 
$u_0$ is continuous functions and $V(y,\omega)$ is a random process as it depends on $\omega$,
an element of an underlying probability space $(\Omega, \mathcal F,\P)$. Notation and precise hypotheses are given in the following section. Here we mention that the vector field  $V$ has divergence zero and is stationary. We also remark that the results presented here can be generalized, at the cost of additional technicalities, to more sophisticated $G$-equations where $|\cdot|$ is replaced by $|(a_{i,j}(x/\ep,\omega)p_ip_j|^{1/2}$, with the matrix $(a_{i,j})$ uniformly elliptic and stationary.

The $G$-equation is a level set pde widely used as a model in ``turbulent'' combustion and flame propagation. It is related to typical discrete percolation problems but we do not expand on this here.
The level sets of the solution to \eqref{eq:uep} are supposed to be flame fronts moving in the normal direction $n$ with (normal) velocity $v_n=1- \langle V(\frac{x}{\ep},\omega),n \rangle$.
The expectation is that, in a self-averaging (stationary ergodic) environment, the oscillatory fronts will converge, as $\ep\to0$, to an averaged one moving with normal velocity $v_n={\overline H}(n)$ for an (averaged) $\overline H$ determined by the problem.
A natural question, which we answer completely here,
is  whether the presence of the advection leads to an enhancement of the front velocity, i.e., whether the averaged fronts move with normal velocity strictly larger than one.

The homogenization of the $G$-equation in random environments has been an open problem for some time. The difficulty is that, since it is not assumed that $V<1$, the equation may not be coercive. As a result there no, uniform in $\ep$, apriori (integrable) estimates controlling either the oscillations of the  solution or the growth of the associated minimal (``first passage'') time function, i.e., the ``shortest time'' it takes to connect two points using the underlying dynamics. Actually we provide a concrete example of a random environment and vector field for which such bounds are not available. The lack of such bounds
puts the study of the averaging properties of the $G$ equation outside the scope of the sub-additive ergodic theorem, which is one of the
main tool for the homogenization of Hamilton-Jacobi equations in random
media. We note that a similar problem arises in the context of identifying asymptotic shapes in the percolation theory. Overcoming it requires additional assumptions except, as far as we know, only in couple cases (see \cite{Ke} for a detailed explanation and \cite{KeS}).

To overcome this major obstacle we devise a novel strategy consisting of two steps. The first is a new reachabililty (controllability) estimate for the minimal time. The second is the construction of a random sequence, which is independent of the original probability space, along which the minimal time function has an almost sure long time asymptotic limit while it stays (in the same sense) close to the minimal time in a fixed  direction.

The first main result of this paper (Theorem~\ref{theo:hom1} stated in Section~\ref{sec:preliminaries}) is the identification of a deterministic, positively homogeneous of degree one and convex effective Hamiltonian $\overline H$, satisfying $| \overline H(p)| \geq |p|+\lg \E[V],p\rg$ for all $p\in\R^N$, combined with the assertion that, as $\ep \to 0$, the solutions $u^\ep$ of \eqref{eq:uep} converge, locally uniformly in $(x,t)$ and almost surely in $\omega$, to a deterministic function $\overline u$, the unique solution of the initial-value problem
\begin{equation} \label{HJ-eff}
\left\{ \begin{aligned}
& \overline u_t  =\overline H(D \overline u)  & \mbox{in} & \ \R^N \times (0,\infty), \\
& \overline u = u_0 & \mbox{on} & \ \R^N \times \{ 0 \}.
\end{aligned} \right.
\end{equation}

The second main result (Theorem~\ref{theo:hen1} also stated in Section~\ref{sec:preliminaries}) is the enhancement property of $\overline  H$ which asserts that the only way to have no enhancement in the direction of $p\in\R^N$, i.e., to have $\overline H(p)=|p|+\lg \E[V],p\rg$, is for the vector field $V(y,\omega)$ to be orthogonal to $p$ for all $y$ and almost surely in $\omega$. In other words the front moves faster in any direction that ``feels'' the advection.

Although there has been considerable interest recently in the homogenization of Hamilton-Jacobi, ``viscous'' Hamilton-Jacobi and uniformly elliptic second order pde in stationary ergodic environments, the analysis of the averaging behavior of \eqref{eq:uep} under general conditions has been an open problem until now. The main reason is that all previous works concerning  Hamilton-Jacobi and ``viscous'' Hamilton-Jacobi equations in random environments require that the problem is coercive in $p$, which, of course, is not the case for \eqref{eq:uep} if $\|V\| \geq 1$. It is worth noting that there are very few results for the homogenization of noncoercive problems (see Alvarez and Ishii \cite{AI01},
 Bardi and Ferrone \cite{BF}, Barles \cite{GBa}, Cardaliaguet \cite{Ca} and Imbert and Monneau \cite{IM}) all dealing with the periodic problems and none with the particular structure considered here.

The homogenization of the $G$-equation in (spatio-temporal) periodic environments, under ``a small divergence''-type assumption on $V$, as well as the enhancement properties were obtained recently by Cardaliaguet, Nolen and Souganidis in \cite{cns} --- a special case was also studied independently by Xin and Yu in \cite{XY}. The need for some additional conditions on $V$, besides boundedeness and Lipschitz continuity, to have averaging was illustrated by a specific example in \cite{cns}. The lack of coercivity is dealt with in \cite{cns} using the isoperimetric inequality which yields estimates that allow
to control uniformly the oscillations of the ``right quantities'' (see Section~\ref{sec:review} for a complete explanation).

The random setting is considerably more complicated due to the lack of compactness of the probability space. Nolen and Novikov \cite{NolenNovikov} studied \eqref{eq:uep} only in $\R^2$ and under the additional hypothesis that $V$ is the gradient of a stream function satisfying some integrability condition, a fact which, in general, is not true for stationary ergodic fields. Here we prove a general result in $\R^N$ without any restrictions on $V$.  To overcome the difficulties it is necessary to introduce new ideas and arguments which we explain later in the paper.

For completeness we refer to some of the recent work for the homogenization of stationary ergodic (degenerate) elliptic pde. While the linear case was settled long ago by Papanicolaou and Varadhan \cite{PV1,PV2} and Kozlov \cite{Ko}, and general variational problems were studied by Dal Maso and Modica \cite{DD,DD1} (see also Zhikov, Kozlov, and   Ole\u{\i}nik \cite{ZKO}), it was only relatively recently that nonlinear problems were considered (in bounded environments). Results for stochastic homogenization of Hamilton-Jacobi equations were first obtained by Souganidis \cite{S91} (see also Rezakhanlou and Tarver \cite{RT}), and for viscous Hamilton-Jacobi equations by Lions and Souganidis \cite{LS,LS03} and Kosygina, Rezakhanlou, and Varadhan \cite{KRV06}. The homogenization of these equations in spatio-temporal media was studied by Kosygina and Varadhan \cite{KV} and Schwab \cite{Sch}. More recently Armstrong and Souganidis \cite{AS} considered unbounded environments satisfying general mixing assumptions. We also mention the works of Caffarelli, Souganidis and Wang \cite{CSW} on the stochastic homogenization of uniformly elliptic equations of second-order, Caffarelli and Souganidis \cite{CS} who obtained a rate of convergence for the latter in strongly mixing environments, and Schwab~\cite{Sch1} on the homogenization of nonlocal equations. A new proof of the results of \cite{KRV06,LS03,RT,S91}, which yields convergence in probability but does not rely on formulae, was found by Lions and Souganidis \cite{LS} and was extended in \cite{AS} to almost sure. These arguments do not apply, however, to the problem  here due the lack of coercivity.

The paper is organized as follows. In the next section (Section~\ref{sec:preliminaries}), we review the notation, introduce the assumptions, and state the precise results as well as an observation about the ergodic properties of the controlled flow associated with $V$. In Section~\ref{sec:review} we recall the approach taken in \cite{cns}, \cite{XY} and \cite{NolenNovikov}, identify the additional difficulties, outline the new strategy that is needed to study the problem and provide an example of an environment and vector field showing that, in general, the integrability estimates needed to employ the sub-additive ergodic theorem are not available. In Section~\ref{sec:controllability} we present a new controllability  estimate while the homogenization is proved in Section~\ref{sec:genN}. The enhancement is studied in Section~\ref{sec:enhancement}. In the Appendix we present a simpler proof for the homogenization in $\R^2$ taking advantage of the special geometry.

\section{Preliminaries, assumptions and results} \label{sec:preliminaries}

We briefly review the basic notation used in the paper, state the precise assumptions and results and conclude with an observation  about the ergodic properties of the (controlled) flow associated to $V$, which is used several times in the paper.

\subsection{Notation and conventions}

The symbols $C$ and $c$ denote positive constants, which may vary from line to line and, unless otherwise indicated, do not depend on $\omega$. We work in the $N$-dimensional Euclidean space $\R^N$ with $N \geq 1$ and we write $\R_+=(0,\infty)$ and $\overline {\R_+}=[0,\infty)$. The sets of rational numbers and positive integers are respectively $\Q$ and $\N$.
For $x,y \in \R^N$ we denote by $|x|$ and $\lg x,y\rg $ the Euclidean norm of $x$ and the inner product of $x,y$ respectively, while $|x|_{\infty}=max_i|x_i|$.
$B(x,r)$ is the closed ball in $\R^N$ centered at $x$ and of radius $r$. We simply set $B=B(0,1)$ and $B_r=B(0,r)$ for $r\geq 0$. We also use the notation $Q_r=[-\frac{r}{2},\frac{r}{2}]^N$.
If $S_1, S_2$ are Borel measurable subsets of $\R^N$, we note by $|S_1|$ and $S_1\Delta S_2$ the Lebesgue measure of $S_1$ and
the symmetric difference between $S_1$ and $S_2$ respectively. If $f:\R^N\times \R\to \R$, we write $Df$ and $f_t$ for the space and time derivatives. For any set $A$ we write ${\bf 1}_A$ for its indicator function.
${\mathcal C}(\R^N), {\mathcal C^{1,1}}(\R^N)$ and ${\mathcal C}^1_c(\R^N)$ are respectively the spaces of bounded continuous, bounded continuously differentiable Lipschitz continuous and compactly supported continuously differentiable functions on $\R^N$. The norms of  ${\mathcal C}(\R^N)$ and ${\mathcal C^{1,1}}(\R^N)$ are  $\|\cdot\|$ and $\|\cdot\|_{{\mathcal C}^{1,1}}$. When we say that a family of functions defined on a subset $U$ of $\R^k$ converges in ${\mathcal C}(U)$, we mean that the family converges locally uniformly in $U$. If $V:\R^N\to\R$, we write $V=(V_1,...,V_N)$ and $\hat V=(V_2,...,V_N)$, and, for $u:\R^N\to \R$, $Du=(u_{x_1},...,u_{x_N})=(\partial_1u,...,\partial_Nu)$ and $\hat Du=( \partial_2u,...,\partial_Nu)$.


We emphasize that, throughout this paper, we work in the context of viscosity solutions and all differential inequalities involving functions not known to be smooth are assumed to be satisfied in the viscosity sense.  When we refer to ``standard viscosity solution theory" in support of a claim, the details can always be found in \cite{cil}.

Some additional notation and terminology will be recorded in the next subsection after we introduce the probabilistic setting.

\subsection{Assumptions and results}

The random environment is described by a probability space $(\Omega, \mathcal F, \P)$. A particular ``medium" is an element $\omega \in\Omega$. The probability space is endowed with a measurable map $\tau:\R^N \times \Omega \to \Omega$. The family $(\tau_x)_{x\in \R^N}$ is supposed to consist of  $\mathcal F$-measurable, measure-preserving transformations $\tau_x:\Omega\to \Omega$ and to satisfy, for all $x,y\in \R^N$, the group property $\tau_x\circ \tau_y=\tau_{x+y}$.

We assume that
\begin{equation} \label{eq:ass1}
\text { the family $(\tau_x)_{x\in\R^N}$ is ergodic},
\end{equation}
which means that, if $D\subseteq \Omega$ is such that $\tau_z(D) = D$ for every $z\in \R^N$, then either $\P[D] = 0$ or $\P[D] = 1$.

A $\mathcal F$-measurable function $f$ on $\R^N \times \Omega$ is said to be stationary if the law of $f(y,\cdot)$ is independent of $y$. This is quantified in terms of $\tau$ by the requirement that
\begin{equation*}
f(y,\tau_z \omega) = f(y+z,\omega) \ \mbox{for every} \ y,z\in \R^N \ \text{ and \ almost surely in \ $\omega$}.
\end{equation*}
Notice that if $\phi:\Omega \to \R^k$ is a random process, then $\tilde \phi(y,\omega) = \phi(\tau_y\omega)$ is stationary. Conversely, if $f$ is a stationary function on $\R^N \times \Omega$, then $f(y,\omega) = f(0,\tau_y\omega)$.

The expectation of a random variable $g$ with respect to $\P$ is written as $\E [g]$. If $f:\R^N \times \Omega \to \R^k$ is stationary, then $\E [f(x,\cdot)]$ is independent of $x$. In this context we abuse the notation by simply writing $\E[ f]$.

If \ $f:\R^N\times\Omega\to \R$ \ is stationary and \ $f(\cdot,\omega)\in {\mathcal C}(\R^N)$ \ a.s. in $\omega$, then
\ $\|f(\cdot,\omega)\|=\|f(\cdot,\tau_x\omega)\|$   for all  $x$  and a.s. in  $\omega$. It follows from $\eqref{eq:ass1}$ that  $\|f(\cdot,\omega)\|$  is independent of $\omega$. In this context, to simplify the notation we write $\|f\|$.

We introduce some additional notation and terminology.
Whenever  possible we abbreviate the phrase almost surely in $\omega$ by a.s. in $\omega$. Most of the statements in the paper are true a.s. in $\omega$, i.e., for all $\omega$'s in some subset $\Omega'\subseteq\Omega$ of full measure, which, of course, means that $\P[\Omega']=1$. To avoid enumerating all these subsets as we move from statement to statement, we always denote them by $\Omega_0$ with the understanding that $\Omega_0$ changes from claim to claim. The $\Omega_0$ in the main results is, of course, the intersection of the $\Omega_0$'s arising in the finitely many the steps of the proofs.

Finally, for any measurable $E\subset\Omega$ and any $\omega\in\Omega$, we denote by $E(\omega)$ the subset of $\R^N$ consisting of all the points in $\R^N$ which ``drive'' $\omega$ to $E$, i.e.,
\begin{equation}\label{specialset}
E(\omega)=\{y\in\R^N: \tau_y\omega \in E \}.
\end{equation}

We assume that:

\begin{equation} \label{eq:ass2}
\begin{cases}
V:\R^N\times\Omega \to \R^N \ \text{ is stationary} \ \text{ and, a.s. in \ $omega$,}\\[2mm]
V(\cdot,\omega) \in {\mathcal C}^{1,1}(\R^N;\R^N) \ \text{ and } \ {\rm div}V (\cdot,\omega)=0.
\end{cases}
\end{equation}

Before we continue, we emphasize that \eqref{eq:ass1} and \eqref{eq:ass2} are assumed throughout the paper and, hence, we will not repeat them in each statement. We also note that, although the homogenization result and the enhancement property only require  \eqref{eq:ass1} and \eqref{eq:ass2}, for a number of technical steps we need to assume, in addition, that the vector field has mean $0$, i.e., that
\begin{equation}\label{eq:mz}
\E[ V]=0.
\end{equation}

Next we record the constant
\begin{equation}\label{eq:M}
M=\|V\| + 1,
\end{equation}
which we will be using at several places in the paper.

Our results for \eqref{eq:uep} are:

\begin{thm}[Homogenization]\label{theo:hom1}  Assume \eqref{eq:ass1} and \eqref{eq:ass2}. There exists a set of full probability $\Omega_0 \subseteq \Omega$ and a positively homogeneous of degree one,
Lipschitz continuous, convex Hamiltonian $\overline H:\R^N\to \R$ such that, for all $p\in \R^N$, $\overline H(p)\ \geq |p|+\lg \E[V],p\rg$  and, for any $u_0\in {\mathcal C}(\R^N)$, if $u^\ep \in {\mathcal C}(\R^N\times\overline{\R_+})$ and
$\overline u \in {\mathcal C}(\R^N\times\overline{\R_+})$
are the solutions  to \eqref{eq:uep} and \eqref{HJ-eff} respectively, then, as $\ep \to 0$ and for every $\omega\in \Omega_0$ ,  $u^\ep\to \overline u$ in ${\mathcal C}(\R^N\times \overline {\R_+})$.
\end{thm}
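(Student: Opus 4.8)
I would pass through the reachable-set (optimal control) representation of the solutions of \eqref{eq:uep} and reduce the homogenization to a large-scale limit for the associated first passage time, which is subadditive but, as the example in Section~\ref{sec:review} shows, not integrable, so that the subadditive ergodic theorem is not directly available; getting around this is the whole point. \textbf{Step 1 (representation and reduction).} By standard viscosity solution theory for level-set (geometric) equations (see \cite{cil}), since $p\mapsto |p|+\langle V(y),p\rangle=\sup_{|a|\le 1}\langle V(y)+a,p\rangle$ is convex and positively homogeneous of degree one, one has
\[
u^\ep(x,t,\omega)=\sup\{u_0(y)\ :\ \ep\,\theta(x/\ep,y/\ep,\omega)\le t\},
\]
where $\theta(x',y',\omega)$ is the first passage time of the controlled flow $\dot\xi=V(\xi,\omega)+a$, $|a|\le 1$, from $x'$ to $y'$, and I used the scaling $\theta^\ep(x,y,\omega)=\ep\,\theta(x/\ep,y/\ep,\omega)$. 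By stationarity $\theta(x,y,\omega)=\rho(y-x,\tau_x\omega)$ with $\rho(z,\omega):=\theta(0,z,\omega)$, which is subadditive, $\rho(z+z',\omega)\le\rho(z,\omega)+\rho(z',\tau_z\omega)$, and satisfies $\rho(z,\omega)\ge |z|/M$ because the flow has speed at most $M=\|V\|+1$. The problem is thus reduced to proving that $\ep\,\theta(x/\ep,y/\ep,\omega)\to\overline\rho(y-x)$, locally uniformly in $(x,y)$ and a.s. in $\omega$, for a deterministic $\overline\rho$, and to identifying it.

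\textbf{Step 2 (the large-scale limit of $\rho$ --- the main obstacle).} This is where the novelty lies and where I expect essentially all the difficulty to be. I would first invoke the controllability estimate of Section~\ref{sec:controllability}, which provides, a.s. in $\omega$, quantitative bounds of the form $\rho(z,\omega)\le C|z|$ for $|z|$ large, so that $\overline\rho$, once it exists, is finite and the sublevel body $\overline{\mathcal K}:=\{z:\overline\rho(z)\le 1\}$ is bounded. Because $\E[\rho(z,\cdot)]$ may be infinite, rather than applying the subadditive ergodic theorem to $\rho$ I would introduce a truncated/modified cost, roughly the minimal time over trajectories made of ``short controlled hops'' (each of controlled cost $\le K$, which exist in sufficient density by the controllability estimate); this cost is again subadditive, is now integrable, and dominates $\rho$, so the subadditive ergodic theorem yields an a.s. limit along $\{nz\}$ for each $z$, which ergodicity \eqref{eq:ass1} forces to be deterministic. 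The remaining, and hardest, point is that this limit actually equals the scaled limit of $\rho$ itself: following the introduction, I would build, on an enlarged probability space independent of $(\Omega,\mathcal F,\P)$, a random sequence along which $\rho$ has an a.s. long-time average that stays asymptotically as small as the modified cost. Together with $\rho\ge |z|/M$, a countable intersection over rational directions, and the convexity and positive homogeneity of $\overline\rho$ (inherited from subadditivity and homogeneity of $\rho$), this upgrades the convergence to the locally uniform statement of Step~1. (In $\R^2$ the special geometry permits a shortcut, carried out in the Appendix.)

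\textbf{Step 3 (the effective Hamiltonian and conclusion).} Set $\overline H(p):=\sup_{q\in\overline{\mathcal K}}\langle p,q\rangle$, the support function of the compact convex set $\overline{\mathcal K}\ni 0$; as such $\overline H$ is convex and positively homogeneous of degree one, and it is Lipschitz since $\overline{\mathcal K}\subseteq B_M$ (from $\rho\ge|z|/M$). The bound $\overline H(p)\ge |p|+\langle\E[V],p\rangle$ follows from the observation of Section~\ref{sec:preliminaries}: the flow induced on $\Omega$ by $\dot\xi=V(\xi,\omega)+p/|p|$ is measure preserving (because $\div V=0$) and ergodic, so by Birkhoff's theorem $\langle p,\xi(t)\rangle/t\to |p|+\langle\E[V],p\rangle$ a.s., whence $t^{-1}\xi(t)$ is asymptotically reachable and lies in $\overline{\mathcal K}$. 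Passing to the limit in the representation of Step~1 (the supremum localizes since $\overline\rho(z)\le t$ forces $|z|\le Mt$, and $u_0$ is uniformly continuous on compacts, so one treats $u_0$ Lipschitz first and general $u_0\in\mathcal C(\R^N)$ by uniform approximation) gives $u^\ep(x,t,\omega)\to\overline u(x,t):=\sup_{q\in\overline{\mathcal K}}u_0(x+tq)$ in $\mathcal C(\R^N\times\overline{\R_+})$ for $\omega$ in a set of full probability. Finally, convexity of $\overline{\mathcal K}$ yields the dynamic programming identity $\overline u(x,t+h)=\sup_{q\in\overline{\mathcal K}}\overline u(x+hq,t)$, so $\overline u$ is a viscosity solution of $\overline u_t=\sup_{q\in\overline{\mathcal K}}\langle q,D\overline u\rangle=\overline H(D\overline u)$ with $\overline u(\cdot,0)=u_0$, and the unique one by the comparison principle for Lipschitz Hamiltonians; this proves Theorem~\ref{theo:hom1} (with \eqref{eq:mz} used, as allowed, wherever the technical steps of Step~2 require it, the general statement following by the corresponding reduction).
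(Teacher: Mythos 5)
Your skeleton coincides with the paper's: reduce via the control representation to the existence of the time constant for the minimal time $\theta$, then read off $\overline H$ as the support function of $\{v:\overline q(v)\le 1\}$ and handle $\E[V]\neq 0$ by translation. (Your $\overline H(p)=\sup_{q\in\overline{\mathcal K}}\langle p,q\rangle$ is the correct support function for the forward dynamics, and your localization/approximation remarks for passing to the limit in the representation are fine.) The problem is that Step 2, which you yourself identify as containing essentially all of the difficulty, is not proved: it is a paraphrase of the paper's introduction. The concrete mechanism you propose --- a ``truncated cost of short controlled hops'' along the deterministic sequence $\{nz\}$, which is integrable, subadditive and dominates $\rho$ --- is neither constructed nor shown to work, and it faces two obstructions you do not address: such a quantity must be \emph{stationary} along $\{nz\}$ for Kingman's theorem to apply (the controllability constant $T(\omega,\ep)$ is not integrable, so ``hops of cost $\le K$'' need not exist where you need them), and an upper bound $\rho\le\tilde\rho$ with $\tilde\rho(nz)/n\to L(z)$ only gives $\limsup\rho(nz)/n\le L(z)$; the matching lower bound is exactly the missing half. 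What the paper actually does is apply Kingman to $\theta(0,X^{\alpha}_{\sigma_n},\omega)$ for a control $\alpha$ oscillating randomly around $\bar a$ on an independent product space $\Omega\times\overline\Omega$: this is automatically bounded by $\sigma_n\le 2\delta n$ (no truncation needed), but it is stationary and subadditive only with respect to a skew-product transformation that is \emph{not} ergodic, so the Kingman limit $\Gamma$ is random. One then needs (i) a genuinely nontrivial ergodicity lemma for the random walk $(T^z)_{z\in D}$ on $\Omega$ (proved via a density-point and viscosity-supersolution argument) together with the Kakutani random ergodic theorem to get $X^\alpha_t/t\to\bar a+O(\delta)$, (ii) the reachability estimate to compare $\theta(0,t\bar a)$ with $\theta(0,X^\alpha_{\sigma_n})$ in \emph{both} directions, and (iii) a separate argument that $\limsup_t\theta(0,tv,\omega)/t$ and $\liminf_t\theta(0,tv,\omega)/t$ are deterministic, so that the $\ep,\delta\to0$ limit pins down a single constant. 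None of these ingredients appears in your proposal.

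There is also a specific error in Step 3: the flow on $\Omega$ induced by $\dot\xi=V(\xi,\omega)+a$ with a \emph{constant} control $a$ is measure preserving (Proposition~\ref{prop:Talpha}) but not, in general, ergodic, so Birkhoff's theorem gives $\xi(t)/t\to\E[V+a\,|\,{\mathcal F}^a]$, a possibly random vector, not $a+\E[V]$. Were that flow ergodic, the whole randomized-control construction would be unnecessary; its failure is precisely the obstruction. The correct route to $B\subseteq\overline{\mathcal K}$ (hence $\overline H(p)\ge|p|$ after the mean-zero reduction) is to note that the random limit always lies in the lower Kuratowski limit ${\mathcal E}^-$ of the rescaled reachable sets, prove that ${\mathcal E}^-$ is a deterministic convex set, and use $\E\bigl[\E[V+a\,|\,{\mathcal F}^a]\bigr]=a$. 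As written, your argument for the lower bound on $\overline H$ does not go through.
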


\begin{thm}[Enhancement]\label{theo:hen1} Assume \eqref{eq:ass1} and \eqref{eq:ass2}. For any $p\in \R^N$, $\bar H(p)=|p|+\lg \E[V],p\rg $ if and only if $\lg V(x,\omega), p\rg=0$ in $\R^N$  and a.s. in  $\omega$.
\end{thm}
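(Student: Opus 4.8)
The "if" direction is immediate: if $\lg V(x,\omega),p\rg=0$ everywhere and a.s., then the advection term in \eqref{eq:uep} drops out along the relevant directions, the controlled dynamics associated with $V$ cannot move faster than at unit speed in the direction $p$, and one reads off $\overline H(p)\le|p|$; combined with the universal lower bound $\overline H(p)\ge|p|+\lg\E[V],p\rg=|p|$ from Theorem~\ref{theo:hom1}, this forces equality. Note $\E[V]$ need not vanish in the statement, but $\lg V,p\rg\equiv0$ gives $\lg\E[V],p\rg=0$, so there is no inconsistency. The real content is the "only if" direction.

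The plan for "only if" is to argue by contrapositive: assuming that $\lg V(\cdot,\omega),p\rg$ is not a.s. identically zero, produce a trajectory of the controlled flow whose average velocity in the direction $p$ strictly exceeds $|p|+\lg\E[V],p\rg$, hence (via the variational/control characterization of $\overline H$, or equivalently of the minimal time function, established in Section~\ref{sec:genN}) show $\overline H(p)>|p|+\lg\E[V],p\rg$. Concretely, I would work with the first-passage/minimal-time representation: $\overline H(p)$ is governed by how quickly, on average, one can travel distance $\lg x,p\rg/|p|$ along controlled paths $\dot\xi = a(t) + V(\xi,\omega)$ with $|a(t)|\le1$. Normalizing $p=e$ a unit vector, the "no enhancement" identity says the optimal long-time average of $\lg\dot\xi,e\rg$ equals $1+\lg\E[V],e\rg$. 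Since $\div V=0$, the stationary/ergodic average of $\lg V,e\rg$ along a path is $\lg\E[V],e\rg$ provided the path's occupation measure equilibrates; thus no enhancement forces the steering term to contribute exactly $\lg a(t),e\rg \equiv 1$ on average, i.e. $a(t)$ must be essentially $+e$ at (almost) all times and all visited points—the path is forced to be nearly a straight line in the direction $e$, with no room to "use" any transverse component of $V$.

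The key step—and the main obstacle—is to convert "$\lg V(\cdot,\omega),e\rg\not\equiv0$" into an actual gain. If $\lg V(y_0,\omega),e\rg\ne0$ at some point, then either it is positive there (and by stationarity on a set of positive measure, so a path passing through such regions and steering with $a=e$ already beats the average—this needs a careful equidistribution/recurrence argument using ergodicity of the flow, cf. the ergodic observation for the controlled flow mentioned at the end of Section~\ref{sec:preliminaries}) or, where $\lg V,e\rg<0$, one exploits $\div V=0$: a divergence-free field tangent-component pattern cannot be sign-definite in a way that only ever hurts, and one can deform the path slightly transversally—paying only a second-order cost in the $e$-direction because $|a|\le1$ allows a small transverse component at the cost $1-\sqrt{1-\delta^2}=O(\delta^2)$—to route through regions where $\lg V,e\rg>0$, or to pick up a transverse drift $\lg V,\hat e\rg$ and realign. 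Making this rerouting quantitative, uniformly over long times and almost surely, is where the controllability estimate of Section~\ref{sec:controllability} and the sub-additive-ergodic machinery of Section~\ref{sec:genN} must be invoked: one shows the rerouted path still has a well-defined average speed, strictly larger than $1+\lg\E[V],e\rg$, giving $\overline H(e)>|e|+\lg\E[V],e\rg$ and completing the contrapositive.

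I expect the delicate points to be (i) handling the case where $\lg V,e\rg$ changes sign, so that a naive "follow the favorable drift" path may be blocked, requiring the divergence-free structure to guarantee a globally profitable detour; and (ii) ensuring the gain is not washed out in the long-time average—i.e. that the set of $\omega$ and the set of spatial locations where enhancement is harvested is seen with positive frequency along the path, which is exactly where ergodicity of the controlled flow enters. The homogeneity and convexity of $\overline H$ from Theorem~\ref{theo:hom1} let us reduce to unit vectors $p$ and to a single representative path, which keeps the argument from proliferating over directions.
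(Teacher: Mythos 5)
Your ``if'' direction is fine, though the paper proves it more simply: if $\lg V,p\rg\equiv0$ then $v^p_\delta\equiv|p|/\delta$ is the exact solution of the discounted cell problem, giving $\overline H(p)=|p|=|p|+\lg\E[V],p\rg$ directly.

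For the ``only if'' direction, however, your plan contains a genuine gap that is precisely the technical heart of the theorem. You propose a contrapositive, trajectory-level argument: if $\lg V,e\rg\not\equiv 0$, deform a reference path transversally to ``harvest'' regions where $\lg V,e\rg>0$, paying an $O(\delta^2)$ steering cost for an $O(\delta)$ gain. Two things go wrong. First, the claim that ``the stationary/ergodic average of $\lg V,e\rg$ along a path is $\lg\E[V],e\rg$ provided the path's occupation measure equilibrates'' cannot be invoked here: Proposition~\ref{prop:Talpha} gives measure preservation only for controls that are fixed functions of time, not for feedback controls that depend on $\omega$ or on the current position. The near-optimal control for $\theta$ is $\omega$-dependent, so its occupation measure on $\Omega$ has no reason to equilibrate, and indeed the whole point of enhancement is that it does not; your no-enhancement-forces-$a\equiv e$ inference therefore does not follow. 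Second, the rerouting step is only heuristic: you do not explain how to guarantee that the detour reaches favorable regions often enough in the long-time average, or why the second-order steering cost does not absorb the first-order gain once the sign changes of $\lg V,e\rg$ force the path through unfavorable regions. You correctly flag these as ``delicate points,'' but you do not resolve them, and resolving them along these lines seems at least as hard as the original problem.

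The paper sidesteps trajectory constructions entirely. Working with the approximate correctors $v^p_\delta$ from \eqref{eq:deltavdelta} (which are available because the homogenization $\delta v^p_\delta\to\overline H(p)$ has already been established), it sup-convolves to obtain Lipschitz stationary functions $\tilde v_\delta$ satisfying the inequalities \eqref{ineq1}--\eqref{ineq2} a.e.\ in $x$ and a.s.\ in $\omega$. The elementary inequality $(a^2+b^2)^{1/2}\ge(1-\ep)^{1/2}a+\ep^{1/2}b$, together with $\E[\lg D\tilde v_\delta+e_1,V\rg]=0$ (from $\div V=0$ and stationarity), yields Lemma~\ref{lem:1}: $\E[|\hat D\tilde v_\delta|]\to0$ and $\E[(\partial_{x_1}\tilde v_\delta+1)_-]\to0$. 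Pointwise analysis then gives $\partial_{x_1}\tilde v_\delta+1\ge0$ and $V_1\ge-1$ a.s.\ (Lemmata~\ref{lem:dx1v+1}, \ref{V1geq-1}), whence $\partial_1\tilde v_\delta\approx -V_1/(1+V_1)$. Taking expectations and letting $\delta\to0$ gives $\E[-V_1/(1+V_1)]\le0$; since $\Phi(s)=-s/(1+s)$ is strictly convex on $(-1,\infty)$ and $\E[V_1]=0$, Jensen's inequality forces $V_1\equiv 0$. This Jensen step is the decisive ingredient your proposal lacks: it is what upgrades the average statement $\E[V_1]=0$ to the pointwise conclusion $V_1=0$ a.s., and there is no analogue of it in the trajectory picture you sketch.
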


\subsection{A technical fact}

Let ${\mathcal A}$ be the set of time measurable maps $\alpha: \R\to B$. 
For any $(x_0,t_0,\alpha)\in \R^N\times \R\times {\mathcal A}$,  the solution  $X^{x_0,t_0,\alpha,\omega}_t:\R\to \R^N$
of the controlled system
\begin{equation}\label{eq:DS}
x'(s)=\alpha(s)+V(x(s),\omega) \ \ \text{ a.e. in $s$ } \ \text{ and }  \  x(t_0)=x_0,
\end{equation}
gives rise, for any $s,t\in \R$ such that $s\leq t$,
to a  transformation  $T_{s,t}^\alpha:\Omega\to\Omega$ defined by
\be\label{def:Talpha}
T_{s,t}^\alpha\omega= \tau_{X^{0,s,\alpha,\omega}_t}\ \omega.
\ee
When $s=0$, we abbreviate the notation by setting $T_{t}^\alpha=T_{0,t}^\alpha$.
We have:

\begin{prop}\label{prop:Talpha} For any $\alpha\in {\mathcal A}$, $(T^\alpha_{s,t})_{s,t\in\R, s\leq t}$ is a measure preserving group on $\Omega$.
\end{prop}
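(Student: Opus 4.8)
The plan is to prove the two assertions of the Proposition separately: first the two-parameter group (cocycle) identity $T^\alpha_{s,u}=T^\alpha_{t,u}\circ T^\alpha_{s,t}$ together with $T^\alpha_{s,s}=\mathrm{id}$ and invertibility, which rests only on uniqueness for \eqref{eq:DS} and the group law $\tau_x\circ\tau_y=\tau_{x+y}$; and then the $\P$-invariance of each $T^\alpha_{s,t}$, which is where $\div V=0$ from \eqref{eq:ass2} enters.

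\textbf{Step 1: covariance of the flow and the cocycle identity.} I would first record the covariance identity
\[
X^{x_0+z,\,t_0,\,\alpha,\,\omega}_t \;=\; z+X^{x_0,\,t_0,\,\alpha,\,\tau_z\omega}_t \qquad (z\in\R^N),
\]
which holds because $s\mapsto z+X^{x_0,t_0,\alpha,\tau_z\omega}_s$ solves \eqref{eq:DS} in the environment $\omega$ with datum $x_0+z$ --- by stationarity of $V$, $V(\,\cdot-z,\tau_z\omega)=V(\,\cdot\,,\omega)$ --- and uniqueness. Combining this with the flow property $X^{x_0,s,\alpha,\omega}_u=X^{X^{x_0,s,\alpha,\omega}_t,\,t,\,\alpha,\,\omega}_u$ (again uniqueness) and $\tau_x\circ\tau_y=\tau_{x+y}$, a direct substitution gives, with $z:=X^{0,s,\alpha,\omega}_t$,
\[
T^\alpha_{t,u}\big(T^\alpha_{s,t}\omega\big)=\tau_{X^{0,t,\alpha,\tau_z\omega}_u}\,\tau_z\,\omega=\tau_{z+X^{0,t,\alpha,\tau_z\omega}_u}\,\omega=\tau_{X^{z,t,\alpha,\omega}_u}\,\omega=\tau_{X^{0,s,\alpha,\omega}_u}\,\omega=T^\alpha_{s,u}\omega ,
\]
while $T^\alpha_{s,s}=\tau_0=\mathrm{id}$. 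Since $V\in{\mathcal C}^{1,1}$, \eqref{eq:DS} is uniquely solvable backward in time as well, so \eqref{def:Talpha} in fact defines $T^\alpha_{s,t}$ for all $s,t\in\R$; the same computation then yields the cocycle identity for arbitrary $s,t,u$, whence $T^\alpha_{t,s}=(T^\alpha_{s,t})^{-1}$ and each $T^\alpha_{s,t}$ is a bimeasurable bijection. I will also use freely that $(\omega,t)\mapsto X^{0,s,\alpha,\omega}_t$ is measurable and that $|X^{0,s,\alpha,\omega}_t|\le M|t-s|$ by \eqref{eq:M}.

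\textbf{Step 2: $\P$-invariance.} By Step 1 it suffices to show $(T^\alpha_{s,t})_*\P=\P$ for $s\le t$. I would fix $\omega$, write $\omega_\sigma:=T^\alpha_{s,\sigma}\omega$, and note that, $V$ being stationary, $V(X^{0,s,\alpha,\omega}_\sigma,\omega)=b(\omega_\sigma)$ with $b(\eta):=V(0,\eta)$, so $\sigma\mapsto\omega_\sigma$ is the trajectory, under the $\R^N$-action, of the time-dependent vector field $\alpha(\sigma)+b(\cdot)$. I would then test against functions $\phi(\eta)=F\big(\int_{\R^N}\rho(y)h(\tau_y\eta)\,dy\big)$ with $\rho\in{\mathcal C}^\infty_c(\R^N)$, $F\in{\mathcal C}^2(\R)$ and $h$ bounded measurable: these are differentiable along the action, with $D_a\phi(\eta):=D_y\phi(\tau_y\eta)\big|_{y=0}$ bounded, and they form a $*$-algebra rich enough to determine $\P$. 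For such $\phi$ the map $\sigma\mapsto\phi(\omega_\sigma)$ is absolutely continuous with $\tfrac{d}{d\sigma}\phi(\omega_\sigma)=\lg D_a\phi(\omega_\sigma),\,\alpha(\sigma)+b(\omega_\sigma)\rg$ a.e., so, setting $\mu_\sigma:=(T^\alpha_{s,\sigma})_*\P$,
\[
\tfrac{d}{d\sigma}\lg\mu_\sigma,\phi\rg=\int_\Omega\lg D_a\phi(\eta),\,\alpha(\sigma)+b(\eta)\rg\,d\mu_\sigma(\eta) .
\]
The crux is that the integrand, averaged against $\P$, vanishes: writing $\Phi(y,\eta):=\phi(\tau_y\eta)$ and $G(y,\eta):=\Phi(y,\eta)\big(\alpha(\sigma)+V(y,\eta)\big)$, the field $G$ is stationary in $(y,\eta)$ and, because $\div_y V(\cdot,\eta)=0$, satisfies $\div_y G(y,\eta)=\lg D_y\Phi(y,\eta),\,\alpha(\sigma)+V(y,\eta)\rg$; hence
\[
\int_\Omega\lg D_a\phi(\eta),\,\alpha(\sigma)+b(\eta)\rg\,d\P(\eta)=\E\big[\div_y G(0,\cdot)\big]=\sum_i\E\big[\partial_{y_i}G_i(0,\cdot)\big]=0 ,
\]
the last step because $\E[G_i(y,\cdot)]$ is independent of $y$ (stationarity and $\tau_x$ measure preserving) while the difference quotients are uniformly bounded. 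It is exactly here that $\div V=0$ is used: otherwise there remains the term $\E[\Phi\,\div_y V]$, which need not vanish. Consequently both $\sigma\mapsto\P$ and $\sigma\mapsto\mu_\sigma$ solve the linear transport equation $\tfrac{d}{d\sigma}\lg\mu_\sigma,\phi\rg=\int\lg D_a\phi,\alpha(\sigma)+b\rg\,d\mu_\sigma$ with datum $\P$ at $\sigma=s$; since the characteristic flow of this equation is precisely $(T^\alpha_{s,\cdot})$, which is well defined and bimeasurable by Step 1, the method of characteristics forces $\mu_\sigma=(T^\alpha_{s,\sigma})_*\P=\P$ for all $\sigma\ge s$.

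\textbf{Expected main obstacle.} Step 1 is routine once the covariance identity is in hand. The real work, and the genuine difficulty, is in Step 2, on two fronts. The first is technical bookkeeping: fixing a class of test functions that determines $\P$, on which $D_a\phi$ exists and is bounded, along which $\sigma\mapsto\phi(\omega_\sigma)$ is genuinely absolutely continuous with the stated derivative, and which behaves well under $\phi\mapsto\lg D_a\phi,\alpha(\sigma)+b\rg$; the ``mollified'' algebra above works, using $V\in{\mathcal C}^{1,1}$ for differentiability of $y\mapsto V(y,\cdot)$ and $|X^{0,s,\alpha,\omega}_\sigma|\le M|\sigma-s|$ for integrability, but the details need care. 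The second, and the true crux, is closing the argument without circularity --- the right-hand side $\int\lg D_a\phi,\alpha(\sigma)+b\rg d\mu_\sigma$ depends on the unknown $\mu_\sigma$, so one cannot simply apply Gronwall; the clean way is to observe that $\int\lg D_a\phi,\alpha(\sigma)+b\rg d\P=0$ for \emph{every} admissible $\phi$, which makes the constant path $\P$ a solution of the transport equation, and then to invoke uniqueness of transport solutions via the method of characteristics, a property of the flow $T^\alpha_{s,\cdot}$ that does not presuppose measure preservation. Alternatively, one may prove invariance separately for the $V$-flow and for pure translations and combine them by an operator-splitting approximation of \eqref{eq:DS}, trading the transport equation for a limiting argument.
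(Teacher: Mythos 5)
Your Step~1 is essentially the paper's proof of the group property: the same covariance identity \eqref{EqTau}, the same chained--flow computation, and the same conclusion that $T^\alpha_{s,t}$ is a bimeasurable bijection with inverse $T^\alpha_{t,s}$. Your Step~2, however, takes a genuinely different route. The paper never passes to an infinitesimal (generator/transport) formulation. Instead it proves the set identity
$(T^\alpha_{s,t}E)(\omega)=\{X^{s,y,\alpha,\omega}_t : y\in E(\omega)\}$,
uses that $\div V=0$ makes $y\mapsto X^{s,y,\alpha,\omega}_t$ Lebesgue--volume preserving on $\R^N$, controls the symmetric difference between ``image of $E(\omega)\cap B_R$'' and ``image of $E(\omega)$, then intersect with $B_R$'' by $O(R^{N-1})$, and reads off $\P[E]=\P[T^\alpha_{s,t}E]$ directly from the ergodic theorem applied to spatial densities. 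This touches the divergence-free hypothesis exactly once and needs no smoothness of observables on $\Omega$ and no PDE uniqueness. Your computation that $\E[\div_y G(0,\cdot)]=0$ is the infinitesimal avatar of the same fact (a Liouville-type statement) and is correct as written.

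The genuine gap is in the closing uniqueness step. Having shown that $\P$ annihilates the generator and that $\mu_\sigma:=(T^\alpha_{s,\sigma})_*\P$ satisfies the weak transport identity, you need uniqueness of the measure-valued transport solution to conclude $\mu_\sigma=\P$; the identity $\int\lg D_a\phi,\alpha(\sigma)+b\rg\,d\P=0$ alone only says the constant path is \emph{a} solution. The standard characteristics proof of uniqueness tests against $\phi_\sigma:=\psi\circ T^\alpha_{\sigma,t}$, which solves the dual equation $\partial_\sigma\phi_\sigma+\lg D_a\phi_\sigma,\alpha(\sigma)+b\rg=0$, and then observes $\tfrac{d}{d\sigma}\lg\mu_\sigma-\P,\phi_\sigma\rg=0$. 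But $\phi_\sigma$ does not lie in the mollified algebra $\phi(\eta)=F\bigl(\int\rho(y)h(\tau_y\eta)\,dy\bigr)$ you introduced, so both the boundedness of $D_a\phi_\sigma$ and the absolute continuity of $\sigma\mapsto\phi_\sigma(\omega_\sigma)$ that you carefully arranged are lost; there is no quotable abstract uniqueness theorem for measure-valued transport on a general probability space $(\Omega,\mathcal F,\P)$ that you can invoke instead. You half-acknowledge this and propose operator splitting as a fallback, which could indeed be made to work, but as written the argument is incomplete. The lesson worth taking from the paper's proof is that by working at the level of sets and the ergodic theorem rather than generators, one never encounters the uniqueness issue at all.
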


\noindent {\bf Proof : }
First we show that $T_{s,t}^\alpha$ is a group. The stationarity of $V$ and the uniqueness of the solutions to \eqref{eq:DS} yield
\be\label{EqTau}
X^{x_0,t_0,\alpha,\tau_y\omega}_t = X^{x_0+y,t_0,\alpha,\omega}_t-y \ \ \text{ for all } \ \ y\in \R^N\;.
\ee
Then, for $s<t<u$,   we have
$$
T_{s,u}^\alpha\omega
=
\tau_{X^{0,s,\alpha,\omega}_u}\omega
=
\tau_{X^{X_t^{0,s,\alpha,\omega},t,\alpha,\omega}_u}\omega,
$$
where, by (\ref{EqTau}), $$X^{X_t^{0,s,\alpha,\omega},t,\alpha,\omega}_u=X^{0,t,\alpha,\tau_{X_t^{0,s,\alpha,\omega}}\omega}_u+X_t^{0,s,\alpha,\omega}\;.$$
Setting, to simplify the expressions, $x_t= X_t^{0,s,\alpha,\omega}$, we find
$$
T_{s,u}^\alpha\omega
=
\tau_{X^{0,t,\alpha,\tau_{x_t}\omega}_u}\tau_{x_t}\omega
=
T_{t,u}^\alpha\circ T_{s,t}^\alpha\omega\;.
$$

To show that $T^\alpha_{s,t}$ is measure preserving, we need an intermediate step. For any measurable  $E\subset \Omega$ (recall \eqref{specialset}), we claim that
\be\label{TalphaXalpha}
(T^\alpha_{s,t}E)(\omega)=\{X^{s,y,\alpha, \omega}_t\; : y\in E(\omega)\}\;.
\ee

Indeed, if $z\in (T^\alpha_{s,t}E)(\omega)$, then, by definition, $\tau_z \omega\in T^\alpha_{s,t}E$. This means that
$T^\alpha_{t,s}(\tau_z\omega)\in E$, where $T^\alpha_{t,s}$ is the inverse of $T^\alpha_{s,t}$. But
$$
T^\alpha_{t,s}(\tau_z\omega)= \tau_{X^{t,0, \alpha,\tau_z\omega}_s}\circ \tau_z\ \omega
= \tau_{X^{t,0, \alpha,\tau_z\omega}_s+z}\ \omega = \tau_{X^{t,z, \alpha,\omega}_s}\ \omega\;.
$$

Therefore
$X^{t,z, \alpha,\omega}_s \in E(\omega)$, $z\in \{X^{s,y, \alpha,\omega}_t :  y\in E(\omega)\}$ and, hence,
$$
(T^\alpha_{s,t}E)(\omega)\subseteq \{X^{s,y,\alpha, \omega}_t : y\in E(\omega)\}\;.
$$

The other inclusion follows similarly.

Next we recall that, since $V$ is divergence free, the map $y\to X^{s,y,\alpha,\omega}_t$ preserves the Lebesgue measure in $\R^N$. Hence, for any $R>0$,
$$
\left| \{X^{s,y,\alpha, \omega}_t : y\in E(\omega)\cap B(0,R)\}\right| = \left| E(\omega)\cap B(0,R)\right|\;.
$$
Moreover, for some $C=C(s,t)>0$,
$$
\begin{array}{l}
\left|\  \{X^{s,y,\alpha, \omega}_t : y\in E(\omega)\cap B(0,R)\}\ \Delta\  (\{X^{s,y,\alpha, \omega}_t : y\in E(\omega)\} \cap B(0,R)\ )\  \right|  \\[2mm]
\qquad  \qquad \leq C |B(0,R+M(t-s))\backslash B(0,R-M(t-s))| \leq  C (R^{N-1}+1).
\end{array} $$

Using the ergodic theorem and (\ref{TalphaXalpha}), we obtain, a.s. in $\omega$,
$$
\P[E]= \lim_{R\to+\infty} \frac{\left| E(\omega)\cap B(0,R)\right|}{|B(0,R)|}
= \lim_{R\to+\infty}
\frac{|\{X^{s,y,\alpha, \omega}_t\; :\; y\in E(\omega)\} \cap B(0,R)|}{|B(0,R)|} =
\P\left[ T^\alpha_{s,t}E\right]\;.
$$
\QED
\section{review of previous results, the new strategy and an example} \label{sec:review}

We discuss here previous results, identify the difficulties and outline our strategy. Then we provide an explicit example that illustrates that the integrability property needed to apply the sub-additive is, in general, not available.

\subsection{ Review of previous results and the new strategy}

As mentioned in the introduction, the homogenization result in this paper gives a complete answer to the program initiated
\cite{cns} 
where the vector field $V$ is assumed to be periodic in space and time. A simpler space periodic case was studied in \cite{XY} while \cite{NolenNovikov} extended the result to the random setting but for $N=2$ and under an additional assumption on $V$.

The main issue concerning the asymptotics of the $G$-equation is that it is not assumed that  $\|V\| < 1$. Hence the Hamiltonian $H(x,p,\omega) = |p| + \langle V(x,\omega), p\rangle$  is not coercive in $p$ for some $x$ and $\omega$. This lack of coercivity is the main mathematical challenge in the analysis.  If either $\|V\|<1$ or the nonlinearity  has superlinear instead of linear growth (e.g., $| p|^\alpha$ with $\alpha>1$ instead of $|p|$), then $H$ is coercive in $|p|$ and the problem is within the scope of the theory developed in \cite{LPV} in the periodic and in  \cite{LS03}, \cite{LS} and \cite{S91} in the random framework.

To explain the need for the new idea/approach  we put forward here, next we describe briefly the key step of the proof in \cite{cns}. To this end, for any $P\in \R^N$ and $\lambda>0$, let $v_\lambda=v_\lambda^P$ be the unique solution to
\be\label{eqintro:lambdavlambda}
\lambda v_\lambda = |v_\lambda +P|+\lg V, Dv_\lambda +P\rg \ \text{ in } \ \R^N.
\ee
It is well known (see \cite {LS, AS}) that the homogenization of (\ref{eq:uep}) is strongly related (actually it is equivalent) to the, a.s. in $\omega$, uniform convergence in balls of radius $1/\lambda$, as $\lambda\to0$,  of $\lambda v_\lambda$ to $\bar H(P)$.

In the periodic framework, it is shown in \cite{cns} using a novel argument based on the  isoperimetric inequality for periodic sets, that the $v_\lambda$'s have bounded oscillations  uniformly in $\lambda$.  In random media, however, as far as we know, there is no isoperimetric inequality for random sets, and this technique breaks down completely.

An alternative approach, used in \cite{XY} and  \cite{NolenNovikov}, consists of analyzing the ``minimal time'' function for the controlled system \eqref{eq:DS}. To motivate the introduction of this approach, we describe next the control formulation of (\ref{eq:uep}). For simplicity we set $\ep=1$. It turns out that the solution of (\ref{eq:uep}) is given by
$$
u(x,t,\omega)= sup_{\alpha\in {\mathcal A}} u_0\left(X_t^{x,0,\alpha,\omega}\right)\;.
$$

The minimal time $\theta(x,y,\omega)$ to reach $y\in \R^N$ starting from $x\in \R^N$ is the smallest time $t$ for which there is a control $\alpha\in {\mathcal A}$ such that the solution of \eqref{eq:DS} satisfies $X_t^{x,0,\alpha,\omega}=y$.

The homogenization result of \cite{NolenNovikov} relies on some local bounds on $\theta$ obtained employing the special structure of the two-dimensional space as well as the additional assumption that vector field $V$ has mean $0$ and is the gradient of a potential $\Psi$ satisfying some integrability conditions.
Indeed, using the stream lines, i.e., the boundary of the level-lines of $\Psi$, which are generically closed curves and play the role of controllability zones, it shown in \cite{NolenNovikov}, under the additional assumptions on $V$, that
the minimal time  $\theta(x,y,\omega)$  is finite for $x,y$ and $\omega$ and, in addition, satisfies
\be \label{NolenNovikov1}
\E[ \sup_{|x|, |y| \leq R} \theta(x,y, \cdot)] <+\infty \qquad \text{ for all \ \ $R>0$},
\ee
and
\be \label{NolenNovikov2}
\lim_{R\to+\infty} \sup_{|x|, |y| \leq R}\frac{ \theta(x,y, \omega)}{R} <+\infty \;.
\ee
In view of these  estimates, it follows, using arguments introduced in \cite{S91}, that there exists  a deterministic  time constant, i.e,  for any $v\in \R^N$, the limit
$$
\lim_{t\to+\infty} \frac{\theta(0,t v,\omega)}{t}= \bar q(v)
$$
exists almost surely and is independent of $\omega$. The main tool for this is the sub-additive ergodic theorem applied to $\theta(sv,tv,\omega)$.  \\

The example in the next subsection shows that, in higher dimensions or even in two dimensions  but without that additional conditions on $\Psi$, an estimate of the form (\ref{NolenNovikov1}) does not hold in general. \\

To overcome this difficulty, we  first establish a controllability property for the system.  We prove in Theorem \ref{reach} that, if \eqref{eq:ass1}, \eqref{eq:ass2} and \eqref{eq:mz} hold, then,  \ a.s. in $\omega$ and for any $\ep>0$ small, there exists a $T(\omega,\ep)>0$ such that
\be\label{thetabound}
\theta(x,y,\omega) \leq T(\omega,\ep)+\ep|x|+(1+\ep)|y-x| \qquad \text{ for all $x,y \in \R^N$},
\ee
a fact which, of course, yields (\ref{NolenNovikov2}).

However, in contrast with \cite{NolenNovikov}, we have no control on the expectation of the constant $T(\cdot,\ep)$. Hence,  we cannot apply directly the sub-additive ergodic theorem to $\theta(sv,tv,\omega)$, since the latter requires an integrability estimates on $\theta(0, tv, \cdot)$. This is a major obstacle that we are able to overcome in this paper using a novel approach. It is worth remarking that this difficulty is also present in the theory of first passage percolation in probability in the context of finding asymptotic shapes, which are level sets of the time constant. This necessitates additional assumptions on the dynamics with the exception of a model studied by Kesten \cite{Ke} (see also \cite{KeS}) where we also refer
for more discussion about percolation.

Since \eqref{thetabound} cannot be used to derive directly the existence of a time constant, it is essential that we develop a new  argument. Indeed we identify a new quantity which (i) is sub-additive and bounded, and, hence, by the sub-additive ergodic theorem, has a long time averaged limit, which is, however, random  and (ii) stays, in an appropriate way, near $\theta$ and, hence, the latter has the same, a priori random, long time averaged limit. On the other hand, the ergodicity yields that the largest and smallest possible long time averaged limits of the minimal time must be a.s. independent of $\omega$. The result then follows.

When $N=2$, a good choice is simply the quantity $\theta(X^{\bar a}_s, X^{\bar a}_t, \omega)$, where $\bar a\in B$ and $X^{\bar a}_t$ is the solution of (\ref{eq:DS}) with initial condition $(0,0)$. Indeed, by definition, $\theta(X^{\bar a}_s,X^{\bar a}_t, \omega)$ (recall that the notation implicity assumes that $s\leq t$) is sub-additive and, obviously, bounded by $t-s$. It is also possible to check, using that  $V$ is divergence free, that it is stationary (the meaning of this is made precise later in the paper).
Moreover, $X^{\bar a}_t$ behaves almost like $t\bar a$. Indeed it is shown that, a.s. in $\omega$,
${X^{\bar a}_t}/t$ has,  as $t\to\pm\infty$, a random limit of the form $\lambda(\omega) \bar a$ with $\P[\{\omega \in \Omega : \lambda(\omega) \geq 1 \}]>0$.
Using the reachability estimate we can then prove the existence of the time constant $\overline q(\bar a)$.

In higher dimensions, the construction is far more involved because the limit of ${X^{\bar a}_t}/t$,  as $t\to\pm\infty$, is not necessarily proportional to $\bar a$. To overcome this problem we introduce a control $\alpha$ which oscillates randomly, but independently of $\omega$, around $\bar a$. In this setting we consider $\theta(X^\alpha_s,X^\alpha_t,\omega)$. 
As before, this quantity turns out to be sub-additive, bounded (again by $t-s$) and stationary in the product probability space. Using the Kakutani ergodic theorem, a more sophisticated version of the classical ergodic theorem, we show that, as $t\to\pm\infty$ and a.s. in the product space, $X_t/t$ approaches $\bar a$. Then we find that $\theta(0,X^\alpha_t,\omega)/t$ has, as $t\to\infty$ and a.s. in $\omega$, a (random) limit, which in view of the reachability estimate, is also the a.s. in $\omega$ limit of $\theta(0,t\bar a,\omega)$. We conclude, using the ergodicity, that this limit is actually independent of $\omega$ and, hence,
the time constant $\overline q(\bar a)$.

The proof of the enhancement property for periodic environments given in \cite{cns} was based on the fact that, the uniform in $\lambda$, estimate on the oscillation of the solutions $v_\lambda$ to \eqref{eqintro:lambdavlambda} yields some kind of approximate corrector. We do not have such estimates in random environments. We can use, however, the fact that we have already established the homogenization. This yields, as previously explained, the a.s. in $\omega$ and uniform in $B_{1/\lambda}$ convergence of the $\lambda v_\lambda$'s to $\overline H(p)$. It is then possible to obtain the result, after some regularizations, based on the convexity of the problem,  and several technical arguments.\\

\subsection{An example}

We work in $\R^2$. We present an example of medium and vector field for which (\ref{NolenNovikov1}) fails to hold and, more precisely, for some $x$,
$$
\E\left[  \theta(0,x,\cdot)\right]=+\infty.
$$

Let $\{e_1,e_2\}$ be the canonical basis of $\R^2$.  We have:

\begin{prop}\label{pr:ex}  There exists a probability space $(\Omega, {\mathcal F}, \P)$ satisfying \eqref{eq:ass1} and a stationary, mean zero $V_2:\R\times \Omega\to \R$ such that $V_2(\cdot,\omega)\in{\mathcal C}^{1,1}(\R)$ a.s. in $\omega$,
$\|V_2\| \leq 2$ and, if \ $\delta(\omega)=\min\{|r|, |V_2(r,\omega)|\leq1 \}, \ \E[\delta]=\infty$. Let $\theta$ be the minimal time function corresponding the vector field $V=(0,V_2)$. Then
\be\label{Cex:theta}
\ds \text{ either } \ \E\left[ \theta(0,e_2,\cdot)\right]=+\infty\ {\rm or }\  \E\left[ \theta(0,-e_2,\cdot)\right]=+\infty.
\ee
\end{prop}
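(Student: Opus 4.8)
The plan is to construct the medium as a "random layered" environment: $V = (0, V_2)$ where $V_2$ depends only on the first coordinate $r = x_1$, and to arrange the layers so that the "forbidden" region $\{|V_2| > 1\}$ — the set where the $e_2$-advection strictly dominates the unit normal speed — has arbitrarily long stretches with positive-probability weight, heavy enough that $\E[\delta] = \infty$. First I would fix a probability space carrying a stationary point process on $\R$ (e.g. a stationary renewal-type process, or a suitably marked Poisson-like construction made stationary by a random shift) whose inter-point gaps have infinite mean; between consecutive points one places a smooth bump of $V_2$ that rises above $1$ in absolute value, with the sign alternating or randomized. Smoothing the profile and truncating at height $2$ gives $V_2(\cdot,\omega) \in \mathcal C^{1,1}(\R)$ with $\|V_2\| \le 2$, stationarity is inherited from the point process, and mean zero is arranged by symmetrizing the sign of the bumps. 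The key design point: $\delta(\omega) = \min\{|r| : |V_2(r,\omega)| \le 1\}$ is at least the distance from $0$ to the nearest "good" point, and by the infinite-mean gap (together with the standard size-biasing/inspection-paradox phenomenon for the gap straddling the origin) one gets $\E[\delta] = +\infty$.

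Next I would analyze the minimal time $\theta$ for the controlled system $x'(s) = \alpha(s) + V(x(s),\omega)$, $|\alpha(s)| \le 1$. The crucial geometric observation is that the first component satisfies $x_1'(s) = \alpha_1(s) \in [-1,1]$, since $V_1 \equiv 0$, so $|x_1(s) - x_1(0)| \le s$: the trajectory cannot leave the slab $\{|x_1| \le t\}$ before time $t$, and more importantly, to move the first coordinate at all one must "spend" control budget $\alpha_1$, which is then unavailable for the second coordinate. To reach $\pm e_2$ from the origin the trajectory must stay essentially near $x_1 = 0$; but on the layer containing the origin, where $|V_2| > 1$ over a stretch of length $\sim \delta(\omega)$ on at least one side (the side where the bump sign points the "wrong" way relative to $e_2$ versus $-e_2$), the second coordinate is being dragged monotonically with speed $\ge |V_2| - |\alpha_2| \ge |V_2| - 1 > 0$ in a fixed direction, so one of the two targets $e_2$, $-e_2$ lies "upstream" and cannot be reached without first exiting that layer. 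Exiting the layer through its side costs time at least proportional to the distance to the layer boundary, i.e. at least $c\,\delta(\omega)$ (one can only move $x_1$ at unit speed, and one must traverse roughly half the layer width — here I would be a little careful and may need the bump to be wide enough, or argue via the width of the sub-level set). Hence $\theta(0, e_2, \omega) \ge c\,\delta(\omega)$ or $\theta(0,-e_2,\omega) \ge c\,\delta(\omega)$ depending on the sign of the origin-layer bump, so $\max\{\theta(0,e_2,\cdot),\theta(0,-e_2,\cdot)\} \ge c\,\delta$, and taking expectations with $\E[\delta] = \infty$ yields \eqref{Cex:theta}.

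I would organize the write-up as: (1) the explicit construction of $(\Omega,\mathcal F,\P)$ and $V_2$ and verification of \eqref{eq:ass1}, \eqref{eq:ass2}, \eqref{eq:mz}, $\|V_2\|\le 2$, and $\E[\delta] = \infty$; (2) the lower bound $\theta(0,\pm e_2,\omega) \ge c\,\delta(\omega)$ for the appropriate sign, via the slab/budget argument; (3) conclusion by taking expectations. The main obstacle I anticipate is step (2): translating "the advection points the wrong way on the origin layer" into a clean, rigorous lower bound on $\theta$ that survives the freedom of the control to wiggle $x_1$ back and forth and to exploit the smoothed transition regions where $|V_2|$ dips below $1$ near the layer edges. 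The honest fix is to make the bumps generous — a long plateau at height strictly above $1$ flanked by short transition zones — so that "exiting the layer" genuinely requires moving $x_1$ by a distance comparable to $\delta(\omega)$ at speed $\le 1$; one then tracks $x_2(s)$ against the worst-case dragging to see it can never return to (or reach) the upstream target within that time. A secondary, more bookkeeping obstacle is ensuring $\E[\delta]=\infty$ together with genuine stationarity: the clean route is to take i.i.d. gaps $L_i$ with $\E[L_i] = \infty$ arranged as a stationary marked process (via the standard Palm/length-biasing stationarization), and note that the gap covering the origin is length-biased, which only makes its mean larger, so $\E[\delta] = \infty$ follows a fortiori.
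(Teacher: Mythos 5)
Your proposal is essentially the paper's proof: same random layered environment $V=(0,V_2)$ with $V_2$ a function of $x_1$ only, same stationary marked point process with heavy-tailed gaps and alternating signs, same length-biasing (Palm/inspection-paradox) argument for $\E[\delta]=\infty$, and the same key observation that $x_1'=\alpha_1\in[-1,1]$ forces the trajectory to stay in the slab $|x_1|<t$ for time $t$.

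Where you overstate the difficulty is in your step (2). You worry that the control could ``exploit the smoothed transition regions where $|V_2|$ dips below $1$ near the layer edges,'' and you propose engineering the bumps to be wide plateaus as an ``honest fix.'' But the definition of $\delta(\omega)=\min\{|r|:|V_2(r,\omega)|\le 1\}$ already makes this a non-issue: on the entire interval $(-\delta(\omega),\delta(\omega))$ one has $|V_2|>1$ with a single sign (by continuity, the sign of $V_2(0,\omega)$). Since $|x_1(s)|\le s$, for all $s<\delta(\omega)$ and \emph{any} control the trajectory has $x_1(s)\in(-\delta,\delta)$, so $\frac{d}{ds}x_2(s)=V_2(x_1(s),\omega)+\alpha_2(s)$ has the fixed sign of $V_2(0,\omega)$ and magnitude $>0$. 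Thus $x_2(s)\ne 0$ stays strictly on one side for all $s\in(0,\delta(\omega))$, which immediately gives $\theta(0,-e_2,\omega)\ge\delta(\omega)$ when $V_2(0,\omega)>1$ and $\theta(0,e_2,\omega)\ge\delta(\omega)$ when $V_2(0,\omega)<-1$; no ``exiting the layer'' estimate is needed. The wide plateaus in the bump profile \emph{are} needed, but only to make $\delta(\omega)$ comparable to the gap length (so that $\E[\delta]=\infty$), not to fix the lower bound on $\theta$. With this simplification your argument becomes exactly the paper's, and the conclusion $\theta(0,e_2,\cdot)+\theta(0,-e_2,\cdot)\ge\delta$ followed by taking expectations is precisely how \eqref{Cex:theta} is obtained.
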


Before we enter into the proof we remark that the heuristic interpretation of $\E[\delta]=\infty$ is that  $V_2$ is strictly above $1$ or strictly below $-1$ on very large intervals. 

We turn next to the 

\begin{proof}[Proof of Proposition~\ref{pr:ex}:]  First we assume that there exists a random variable $V_2$ with the properties described in the statement of the proposition, and, hence  and we prove \eqref{Cex:theta}. It is, of course, immediate that
$V=(0,V_2)$ satisfies \eqref{eq:ass2} and \eqref{eq:mz}. 

 Fix $\omega$ such that $V_2(0,\omega)>1$. Then, by definition,  $V_2(x_1,\omega)>1$ on the interval $(-\delta(\omega),\delta(\omega))$. Moreover, for any control $\alpha\in {\mathcal A}$, the first component of the solution $X^{0,0,\alpha,\omega}_t$ of \eqref{eq:DS} remains in $(-\delta(\omega),\delta(\omega))$, at least for $t\in (0,\delta(\omega))$, because its horizontal speed is at most $1$. So
$$
\frac{d}{dt} \lg X^{0,0,\alpha,\omega}_t, e_2\rg \geq  V_2(X^{0,0,\alpha,\omega}_t)-1>0\ \ \text{on \ $(0,\delta(\omega))$}\;.
$$
In particular, the second component of $X^{0,0,\alpha,\omega}_t$ remains positive on $[0,\delta(\omega))$, so that $\theta(0, -e_2,\omega)\geq \delta(\omega)$. In the same way, if $V_2(0,\omega)<-1$, then $\theta(0,e_2,\omega)\geq \delta(\omega)$. So
$$
 \theta(0,e_2,\omega)+ \theta(0,-e_2,\omega) \geq \delta(\omega)\;.
$$
Taking the expectation in the above inequality  yields (\ref{Cex:theta}). \\

Now we turn to the construction of the probability space and  $V_2$. To this end 
consider a marked  point process $(Y_n,\sigma_n)_{n\in\N}$, where $(Y_n)$ is an increasing sequence in $\R$ and the marks $\sigma_{n}$ are in $\{-1,1\}$. The map $V_2$ is constructed so that it has a constant sign $\sigma_{n-1}$ on the space intervals $(Y_{n-1}, Y_n)$, with an absolute value above $1$ in the interior of large intervals $(Y_{n-1}, Y_n)$.

Let $\mu$ be  a probability measure  on $(0,+\infty)$ such that $m= \int_0^\infty xd\mu(x)$ is finite but $\int_0^\infty x^2d\mu(x)=+\infty$. Next we define a probability measure $\hat \P$ on $\hat \Omega= ((0,+\infty)\times \{-1,1\})^{\Z}$ such that, for any element $(T_n, \sigma_n)_{n\in \Z}$ of $hat \Omega$,

\begin{enumerate}
\item $\hat \P(\sigma_0=1)=\hat \P(\sigma_0=-1)=\frac12$,

\item given $\{\sigma_0=1\}$, we have $\sigma_n=(-1)^n$ for any $n\in \Z$ while,  given $\{\sigma_0=-1\}$, we have  $\sigma_n=(-1)^{n+1}$ for any $n\in \Z$,

\item $(T_n)$ are i.i.d. with law $\mu$ and are independent of the $(\sigma_n)$.
\end{enumerate}

It is immediate that $\hat \P$ is stationary with respect to shift $\bar \tau$ on $\hat \Omega$ which is  naturally defined by $\bar \tau((T_n, \sigma_n)_{n\in \Z})= (T_{n+1}, \sigma_{n+1})_{n\in \Z}$.
We claim that $\hat P$ is actually ergodic. Indeed let $E$ be a measurable invariant with respect to  $\bar \tau$ subset of $\hat \Omega$. Then $E=E_1\cup E_{-1}$ where $E_1= E\cap \{\sigma_0=1\}$ and $E_{-1}=E\cap \{\sigma_0=-1\}$. Since $\bar \tau(E_1)=E_{-1}$,  $E_1$ and $E_{-1}$ have the same probability and $\hat \P(E)=2\hat \P(E_1)$. Moreover note  that $E_1$ is also invariant under $\bar \tau^2$. Let $F_1$ be the projection of $E_1$ onto the first component of $\hat \Omega$. Since $(t_n)_{n\in\Z} \in F_1$ if and
only if $(t_n,(-1)^n)_{n\in\Z}\in E_1$, $F_1$ is a measurable subset of $(0,+\infty)^\Z$ and satisfies a.s.  $\bar \tau^2(F_1)=F_1$, where to keep the writing simple we abuse the  notation denoting by $\bar \tau$ also  the shift on $(0,+\infty)^\Z$. The image $\hat \P_1$ of $\hat \P$ onto the first component of $\hat \Omega$ is also the product measure $\mu^{\otimes \Z}$, for which the shift $\bar \tau^2$ is ergodic. Therefore $\hat \P_1(F_1)=0$ or $1$, so that $\hat \P(E_1)= \hat \P_1(F_1)\hat \P(\{\sigma_0=1\})=0$ or $1/2$. It follows that either  $\hat \P(E)=0$ or $1$. \\

Next we describe the law $\P$ of $(Y_n, \sigma_n)_{n\in \Z}$ in such a way that the sequence $(T_n)_{n\in\Z}$ represents the intervals between the points $(Y_n)_{n\in\Z}$.  In order to get a medium which is stationary and ergodic, the law of the arrival times $(T_n)_(n\in\Z)$ has to be (slightly) distorted  under the measure $\P$ on the set $\Omega= (\R\times \{-1,1\})^{\Z}$ as follows:
\begin{enumerate}
\item $\P(\sigma_0=1)=\P(\sigma_0=-1)=\frac12$, and, given $\{\sigma_0=1\}$, $\sigma_n=(-1)^n$ for any $n\in \Z$, while, if $\{\sigma_0=-1\}$, then  $\sigma_n=(-1)^{n+1}$ for any $n\in \Z$,

\item the pair $(Y_0,Y_1)$ is independent of $(\sigma_n)_{n\in \Z}$ and
$$
\P[ -Y_0\in dx,\; Y_1-Y_0\in dv] =\frac{1}{m} {\bf 1}_{[0,v]}(ds)\mu(dv),
$$
\item for $n\in\Z\setminus \{0,1\}$, the $Y_n$'s are defined inductively  by the relation $T_n=Y_n-Y_{n-1}$, where  the $(T_n)_{n\neq 1}$ are i.i.d. with law $\mu$ and are independent of $(Y_0,Y_1)$ and $(\sigma_n)_{n\in\Z}$.
\end{enumerate}

By construction the sequence $(Y_n)_{n\in\Z}$ is increasing with $Y_0\leq 0< Y_1$. It follows (see, for example, \cite{DVJ2, Ne}) that medium constructed above is  (strictly) stationary, in the sense that the random variable $((Y_n+s,\sigma_n))_{n\in Z}$ has the same law as $((Y_n,\sigma_n))_{n\in Z}$ for any $s\in \R$. Since $\hat \P$ is the Palm measure of the process  $((Y_n,\sigma_n))_{n\in \Z}$  and, in addition, ergodic, it follows that $\P$ is ergodic too (see statement I.6.3  of \cite{BB}).

Next we claim that
$$
\E[|Y_0|\wedge Y_1]=+\infty\;.
$$

Indeed
$$
\begin{array}{rl}
\ds \E[|Y_0|\wedge Y_1] \; = &\ds  \int_0^{+\infty} \P\left[ -Y_0>x,\; Y_1>x\right]\ dx.
\end{array}
$$

Since
$$
\begin{array}{rl}
\ds \P\left[ -Y_0>x,\; Y_1>x\right]
=  & \ds \P\left[ -Y_0>x,\; Y_1-Y_0>x-Y_0\right] \\
=  & \ds \frac{1}{m}\int_0^{+\infty}\int_0^{v} {\bf 1}_{\{s>x, \; v>x+s\} }\ ds\ \mu(dv) \\
= &  \ds\frac{1}{m} \int_{2x}^{+\infty} (v-2x)\ \mu(dv),
\end{array}
$$
it follows that 
$$
\begin{array}{rl}
\ds \E[|Y_0|\wedge Y_1]\; = & \ds \frac{1}{m} \int_0^{+\infty}   \int_{2x}^{+\infty} (v-2x)\ \mu(dv)\ dx\\
= & \ds  \frac{1}{m} \int_0^{+\infty}  \frac{ v^2}{4}\ \mu(dv)\;  =\;   +\infty.
\end{array}
$$

Finally fix a  smooth, Lipschitz continuous function $\varphi:[0,+\infty)\times \R\to [0,2]$ such that
$\varphi(a,x)=0$ if $x\leq 0$ or $x\geq a$ for any $a$, and $\varphi(a,x)=2$ if  $a\geq 1$ and $x\in [1,a-1]$. We define
$$
V_2(x_1,\omega)= \sum_{n\in \Z} \varphi( Y_n(\omega)-Y_{n-1}(\omega), x_1-Y_{n-1}(\omega))\sigma_{n-1}(\omega){\bf 1}_{[Y_{n-1}(\omega),Y_n(\omega))}(x_1)\;.
$$

By construction,  $V_2$ is stationary, $|V_2|\leq 2$ and $\E[V_2]=0$. It only remains to show that the map $\delta:\Omega\to\R$ is not integrable. Let  ${\rm dist}(0,N)$ denote the distance between $0$ and the set $N=\{Y_n: \; n\in \Z\}$.  If ${\rm dist}(0,N)\geq 2$, the definition of $\varphi$  yields that $|V_2(x_1, \omega)|=2$ if
$x_1\in [-{\rm dist}(0,N)+1, {\rm dist}(0,N)-1]$, and, therefore
\be\label{deltadist}
\delta(\omega) \geq {\rm dist}(0,N)-1\;.
\ee

It follows that $\E[\delta]=\infty$, since ${\rm dist}(0,N)= |Y_0|\wedge Y_1$  and 
$$
\E\left[ {\rm dist}(0,N)\right] = \E\left[ |Y_0|\wedge Y_1\right] = +\infty\;.
$$

\end{proof}

\section{A controllability estimate}\label{sec:controllability}

For $x\in \R^N$ and $t\geq 0$ we denote by ${\mathcal R}_t(x,\omega)$ the reachable set at time $t$, which is the
is the set of points $y\in \R^N$ for which there is a
control $\alpha\in {\mathcal A}$ such that
$X^{x,0,\alpha,\omega}_t=y$. 
Similarly we denote by
${\mathcal R}_{-t}(x,\omega)$ the reachable set  at time $t$ for the controlled
system $x'=\alpha-V(x,\omega)$. 

For future use we remark here that, in view of the properties of $V$, we have, for all $R, T>0$ with $R>MT$ (recall \eqref{eq:M}) and all $t\in [0,T]$,
\be\label{eq:reach1}
{\mathcal R}_t(x,\omega) \subseteq B(x,R).
\ee

The minimal time to reach a point $y$ from a point $x$ is given by
$$
\theta(x,y, \omega)= \min\{t\geq 0\: : y\in {\mathcal R}_t(x,\omega)\}\;,
$$
where we  set $\theta(x,y, \omega)=+\infty$ if there is no $t\geq0$ with $y\in {\mathcal R}_t(x,\omega)$.

The main result of this section is the following controllability estimate:

\begin{thm} \label{reach} Assume (\ref{eq:ass1}), (\ref{eq:ass2}) and (\ref{eq:mz}). 
Then there exist $\ep_0>0$ and $\Omega_0 \subseteq \Omega$ of full probability such that,  for all $\ep\in(0,\ep_0)$ and $\omega\in \Omega_0$, there exists a constant $T(\omega, \ep)>0$ such that \eqref{thetabound} holds.
\end{thm}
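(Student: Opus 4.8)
The plan is to deduce the bound \eqref{thetabound} from a purely qualitative controllability fact---that the origin can reach (and be reached from) a ball of some fixed radius in bounded time with positive probability---and then to propagate this along a geodesic using the stationarity and the ergodic theorem. The central difficulty is that we do not control $\|V\|$, so the obvious idea ``follow a straight line'' fails: along a straight segment the advection may push orthogonally and prevent progress, or worse, push us backwards. We therefore must exploit the divergence-free condition and \eqref{eq:mz}, which force the flow to return: on average the drift is zero, so trajectories of \eqref{eq:DS} cannot escape to infinity in a fixed direction, and the reachable sets ${\mathcal R}_t(x,\omega)$ must, for $t$ large (random, depending on $\omega$), contain balls of radius comparable to $t$ around $x$.

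First I would establish a \emph{local} controllability lemma: there exist a radius $r_0>0$, a time $t_0>0$ and $\delta_0>0$ such that $\P[\,B(0,r_0)\subseteq {\mathcal R}_{t_0}(0,\cdot)\,]\ge\delta_0$, and likewise for the time-reversed system. This should follow from a compactness/measurability argument together with the observation that for \emph{some} $\omega$ (in fact a positive-measure set, using a Lusin-type argument on $V$) the drift is small on a neighbourhood of the origin, where the controlled system is genuinely controllable since $\alpha$ ranges over the full unit ball $B$. Call an $\omega$ with this property ``good''; by the group property of $\tau$ and stationarity, the set of $y\in\R^N$ with $\tau_y\omega$ good has, a.s.\ in $\omega$, density $\delta_0$ by the ergodic theorem (this is exactly the computation already used in the proof of Proposition~\ref{prop:Talpha}, replacing Lebesgue density of $E(\omega)$ by a frequency). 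Hence, a.s.\ in $\omega$, good centres are $R_\ast(\omega)$-dense in $\R^N$ for some random $R_\ast(\omega)<\infty$.

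Next I would chain these local moves. Given $x,y$, pick a good centre $z_0$ within distance $C R_\ast(\omega)$ of $x$, then successively good centres $z_1,z_2,\dots$ roughly along the segment $[x,y]$, spaced by $O(R_\ast(\omega))$ and with a small transverse wiggle so that each is good; the number of steps is $O(|y-x|/R_\ast(\omega)) + 1$. Between consecutive good centres the system is controllable in bounded time, and the local controllability at each $z_i$ lets us ``re-centre'' the trajectory. Summing the times: an $O(1)$ cost per step times $O(|y-x|/R_\ast(\omega))$ steps, plus an $O(1)$ cost per unit length of actual travel because the maximal speed is bounded below/above (here is where $(1+\ep)|y-x|$ enters: the travel time along the chain exceeds the straight-line time only by the arc-length overhead of the wiggles, which can be made $\le\ep|y-x|$ by taking the transverse amplitude small; and there is a fixed additive overhead $T(\omega,\ep)$ absorbing the first jump from $x$ to $z_0$ and the bookkeeping constants). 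The term $\ep|x|$ appears because reaching the first good centre near $x$ costs at most $CR_\ast(\omega)$, but we must also account for the possibility (in the re-centring) that good centres near $x$ itself are only $R_\ast(\omega)$-dense with $R_\ast$ depending on having sampled a large enough region, i.e.\ the ``local'' density near $x$ is only achieved once $|x|$ is large compared to a fluctuation scale---this is precisely the place where one loses the $\ep|x|$ and cannot get a clean $O(1)+(1+\ep)|y-x|$ bound.

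The main obstacle I anticipate is the local controllability lemma and, tied to it, making the $\ep|x|$ term honest: one needs the frequency-of-good-centres estimate to hold \emph{uniformly} along every direction simultaneously, which requires a covering/continuity argument (good is an open-ish condition, so one discretizes directions and radii and takes a countable intersection of full-measure sets), and one needs the random spacing $R_\ast(\omega)$ near a far-away point $x$ to be controlled by $\ep|x|$, which is where a Borel--Cantelli argument over a lattice of base points enters. Once \eqref{thetabound} is proven for $x,y$ in a countable dense set with the exceptional null set fixed, continuity of $\theta$ in its arguments (which follows from the Lipschitz dependence of solutions of \eqref{eq:DS} on data, using $\|V\|_{{\mathcal C}^{1,1}}<\infty$) upgrades it to all $x,y\in\R^N$.
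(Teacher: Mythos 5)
Your plan has the right outer shell---find ``good'' regions of positive probability, use ergodicity to locate one near $x$ at cost $\ep|x|$, then travel---and this parallels the last step of the paper's argument (where the good set is $E_\ep=\{T_2(\cdot,\ep)<L\}$). But the core of the theorem is missing, and the substitute you propose for it rests on a false premise. Your local controllability lemma is justified by the claim that, with positive probability, the drift is small on a fixed neighbourhood of the origin. Nothing in \eqref{eq:ass1}, \eqref{eq:ass2}, \eqref{eq:mz} guarantees this: for $N\geq 3$ take $V(y)=2(\cos(y_3+\xi),\sin(y_3+\xi),0)$ with a random phase $\xi$; this is smooth, stationary, divergence free, mean zero, and $|V|\equiv 2$, so $\P[\,|V(0,\cdot)|<1\,]=0$ and no Lusin-type argument produces a low-drift ball. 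The paper's mechanism for ``the reachable set opens up'' is entirely different and does not pass through small drift: the divergence theorem kills the advective term in $\frac{d}{dt}|{\mathcal R}_t|$, so the isoperimetric inequality forces $|{\mathcal R}_t(x,\omega)|\geq\beta t^N$ (Lemma~\ref{lem:VolR}); this volume bound, fed into the ergodic theorem, is what lets ${\mathcal R}_t(x,\omega)$ hit any high-probability set in time $K+k\delta^{1/N}|x|$ (Lemma~\ref{ReachEom}) --- note it is the full $N$-dimensional reachable set, not a one-dimensional chain of centres, that meets the good set, and this is where the $\ep|x|$ honestly comes from. Your alternative, a Borel--Cantelli argument over a lattice to control the spacing $R_\ast(\omega)$, needs summable (hence quantitative) probabilities, which bare ergodicity does not supply; indeed the paper's own Proposition~\ref{pr:ex} exhibits a field for which the distance from $0$ to the controllable region has infinite expectation, so good centres are not $\rho$-dense for any integrable $\rho$.

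Even granting a local controllability event of probability $\delta_0$, the chaining cannot deliver the constant $(1+\ep)$ in front of $|y-x|$. Each local move covers distance at most $r_0$ in time $t_0$, so the chained trajectory progresses at speed $r_0/t_0$, and you would obtain at best $\theta(x,y,\omega)\leq T(\omega)+C|x|+C|y-x|$ with a large $C$; the assertion that ``the maximal speed is bounded below'' so that travel time exceeds $|y-x|$ only by the wiggle overhead is precisely the hard statement, since the advection can oppose progress along the whole segment. In the paper this is the content of $B\subseteq{\mathcal E}^-$ together with its measure-theoretic upgrade (Lemmas~\ref{lem:convex}, \ref{lem:B01subsetE-}, \ref{lem:RtB01}): one shows $\E[V+\alpha\,|\,{\mathcal F}^\alpha]\in{\mathcal E}^-$ for every constant control $\alpha$ via the ergodic theorem for the flow, proves ${\mathcal E}^-$ is convex and deterministic (using the rigidity Lemma~\ref{SubSolOmega} for stationary supersolutions of $0\geq|Dw|+\lg V,Dw\rg$), and only then uses $\E[V]=0$ to conclude $\alpha\in{\mathcal E}^-$. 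Finally, $\theta(0,y,\omega)\leq T+(1+\ep)|y|$ is obtained not by following a path but by intersecting the forward reachable set from $0$ with the backward reachable set from $y$, both of which carry the $\beta t^N$ volume bound. None of these ingredients --- volume growth via isoperimetry, the supersolution rigidity, convexity of the asymptotic reachable set, or the forward/backward intersection --- appears in your proposal, and without them the sharp constant, which the rest of the paper relies on (e.g.\ for $\bar q(v)\leq|v|$), is out of reach.
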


Notice that although Theorem~\ref{reach} yields that the minimal time is finite a.s. in $\omega$, it does not imply any type integral bound on $\theta$.  

Some comments about assumption  (\ref{eq:mz}) are now in order. Although the mean $0$ property of $V$ is not necessary for either of our main results (Theorems \ref{theo:hom1} and \ref{theo:hen1}), Theorem \ref{reach}, on the contrary, cannot be expected to hold without some smallness assumption on $\E[V]$. Indeed, if $|\E[V]|$ is large (for instance, if $1+\|V-\E[V]\| <  |\E[V]|$), then $\theta(0, -\E[V],\omega)=+\infty$ a.s. in $\omega$ simply because any solution $X^{0,0,\alpha,\omega}$ satisfies
$$
\begin{array}{rl}
\ds \lg X^{0,0,\alpha,\omega}_t , \E[V]\rg \; =& \ds  \int_0^t \lg V(X^{0,0,\alpha,\omega}_s, \omega)+\alpha(s) , \E[V]\rg \ ds  \\
\geq & \ds t\left( |\E[V]|^2 - |\E[V]|\left( \|V-\E[V]\|+1)\right)  \right)>0\;,
\end{array}
$$
and, hence, $ X^{0,0,\alpha,\omega}_t\neq \E[V]$ for all $t\geq 0$.

Throughout this section, and unless otherwise specified, we assume that  (\ref{eq:ass1}, \ref{eq:ass2}) and (\ref{eq:mz}) hold. \\

The proof of \eqref{thetabound}  is rather long and consists of several steps some of which we identify next.
The first (Lemma~\ref{lem:VolR}) concerns the volume and perimeter of the reachable set ${\mathcal R}_t(x,\omega)$. It turns out that, for every $x$ and a.s. in $\omega$, the volume of the reachable set grows in time with a uniform lower bound of order $t^N$ 
while its perimeter is bounded from above by $s^{N-1}$ for some $s$ in each time interval of length of order $t$. These estimates combine to give that, for each $x\in\R^N$ and a.s. in $\omega$, the reachable set ${\mathcal R}_t(x,\omega)$ eventually ``fills up the whole space'', in the sense that it intersects $E(\omega)$ for any $E\subset\Omega$ with sufficiently large probability (Lemma~\ref{ReachEom}).
The second step is that, again for every $x$ and a.s. in $\omega$, the set theoretic upper and lower limits, 
as $t\to\infty$, of ${\mathcal R}_t(x,\omega)/t$ are independent of $\omega$ compact subsets ${\mathcal E}^\pm$ of $\R^N$ (Lemma~\ref{lem:E+E-indepom}) and, in addition, ${\mathcal E}^{-}$ is convex (Lemma~\ref{lem:convex}) and contains the unit ball $B$ (Lemma~\ref{lem:B01subsetE-}). The last step is to ``transform'' the set-theoretic limits to limits in terms of measure (Lemma~\ref{lem:RtB01}).

We begin with

\begin{lem}\label{lem:VolR} There exists $\Omega_0\subseteq\Omega$ of full probability such that, for all $t\in \R\setminus\{0\}, x\in \R^N$ and $\omega\in \Omega_0$, ${\mathcal R}_t(x,\omega)$ has a non empty interior and a
boundary with a finite perimeter. Moreover, there exists $\beta>0$ depending on $N$
such that
$$
\left|{\mathcal R}_t(x,\omega)\right|\geq \beta |t|^N.
$$
Finally, for any $0<\gamma_1<\gamma_2$, there exists $K=K(\gamma_1,\gamma_2)>0$ such that, for all $t>0$,
$$
{\rm Per}({\mathcal R}_s(x,\omega)) \leq Ks^{N-1} \ \ \text{ for some \ $s\in [\gamma_1 t, \gamma_2 t]$}.
$$
\end{lem}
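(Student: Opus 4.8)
The three assertions — non-empty interior and finite perimeter, the volume lower bound $|{\mathcal R}_t(x,\omega)| \geq \beta|t|^N$, and the perimeter upper bound on a fraction of each time interval — are of a quite different nature, so I would treat them separately. For the structural claim, observe that the reachable set at time $t$ is, up to the flow of the divergence-free field $V$, the image of the ball $B_{|t|}$ under a Lipschitz map; more precisely $X^{x,0,\alpha,\omega}_t = \Phi^{x,\omega}_t\big(\int_0^t (\Phi^{x,\omega}_s)^{-1}\alpha(s)\,ds\big)$ in a suitable sense where $\Phi$ is the (measure-preserving, since $\div V=0$) flow of $V$. Since $V(\cdot,\omega)\in{\mathcal C}^{1,1}$ a.s., the flow $\Phi^{x,\omega}_t$ is a bi-Lipschitz diffeomorphism of $\R^N$, so ${\mathcal R}_t(x,\omega)$ is the image of a set with non-empty interior (a small ball around $0$ is reachable for small times, and one propagates this along the flow) and of finite perimeter (bi-Lipschitz images of sets of finite perimeter have finite perimeter; a ball will do). The $\Omega_0$ here is just the full-measure set on which $V(\cdot,\omega)$ is ${\mathcal C}^{1,1}$.

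For the volume lower bound, since the flow of $V$ is volume-preserving, $|{\mathcal R}_t(x,\omega)|$ equals the volume of the set of ``pulled-back controlled displacements'' $\{\int_0^t (\Phi^{x,\omega}_s)^{-1}\alpha(s)\,ds : \alpha\in{\mathcal A}\}$. Because $\|(\Phi^{x,\omega}_s)^{-1}\| $ and $\|\Phi^{x,\omega}_s\|$ are controlled by $e^{\|DV\|_\infty |s|}$ — which is {\em not} uniform — this is where I expect to need an ergodic/Birkhoff argument rather than a purely deterministic estimate: one wants, a.s.\ in $\omega$, a lower bound on the ``effective controllability matrix''. A cleaner route: by the reachability group structure, ${\mathcal R}_{t}(x,\omega)\supseteq {\mathcal R}_{t-\sigma}({\mathcal R}_\sigma(x,\omega),\omega)$, and iterating over $\lfloor |t|/\sigma\rfloor$ steps of a fixed small length $\sigma$ (chosen so that on each step one can genuinely move a fixed amount, using that $|\alpha|\le 1$ dominates locally once $\sigma$ is small relative to $1/\|DV\|$... but $\|DV\|$ depends on $\omega$) one grows the set. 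Honestly, the argument as written in the paper probably proves the volume bound by a direct sub-solution / comparison argument for the function $t\mapsto |{\mathcal R}_t(x,\omega)|$: one shows $\tfrac{d}{dt}|{\mathcal R}_t|^{1/N}\geq c$ using the isoperimetric inequality together with the fact that the ``speed'' normal to the reachable front is at least the $+1$ coming from $|Du|$ minus the tangential drift, which does not reduce the enclosed volume. I would develop this comparison ODE: $|{\mathcal R}_t|$ is non-decreasing in $|t|$, and its $N$-th root grows at least linearly, giving $\beta|t|^N$ with $\beta=\beta(N)$ independent of $\omega$ and $x$.

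For the perimeter bound, the point is that one {\em cannot} hope for $\mathrm{Per}({\mathcal R}_t)\leq Kt^{N-1}$ for every $t$ — the front can be very wrinkled — but an averaging (pigeonhole) argument over the interval $[\gamma_1 t,\gamma_2 t]$ works. Indeed, the coarea-type inequality $\int_{\gamma_1 t}^{\gamma_2 t}\mathrm{Per}({\mathcal R}_s(x,\omega))\,ds \leq C\,|{\mathcal R}_{\gamma_2 t}(x,\omega)\setminus {\mathcal R}_{\gamma_1 t}(x,\omega)| \cdot(\text{bound on front speed})^{-1}$ — more precisely, since the reachable set advances with normal speed between $0$ and $M$, one has $\tfrac{d}{ds}|{\mathcal R}_s|\le M\,\mathrm{Per}({\mathcal R}_s)$ but also, crucially for the other direction, a lower speed bound of order $1$ giving $\tfrac{d}{ds}|{\mathcal R}_s|\ge c\,\mathrm{Per}(\partial {\mathcal R}_s \cap \{\text{moving part}\})$; combined with $|{\mathcal R}_{\gamma_2 t}| \leq |B(x,M\gamma_2 t)| = C(\gamma_2 t)^N$ from \eqref{eq:reach1}, we get $\int_{\gamma_1 t}^{\gamma_2 t}\mathrm{Per}({\mathcal R}_s)\,ds \leq C(\gamma_2 t)^N$, so by the mean value theorem there is $s\in[\gamma_1 t,\gamma_2 t]$ with $\mathrm{Per}({\mathcal R}_s)\leq \frac{C(\gamma_2 t)^N}{(\gamma_2-\gamma_1)t} = K s^{N-1}$ after absorbing constants (using $s\ge\gamma_1 t$). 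The main obstacle throughout is the non-uniformity in $\omega$ of the Lipschitz bound on $V$: the deterministic-flow estimates degrade exponentially, so the linear-in-$t$ growth of the reachable front must be extracted from the {\em geometry} of the $G$-equation (speed $\geq 1-\langle V,n\rangle$, with the drift tangentially re-arranging but not shrinking volume) rather than from Gronwall-type flow bounds — this is exactly the place the mean-zero hypothesis \eqref{eq:mz} and an ergodic argument enter to guarantee the volume genuinely expands at a dimension-dependent rate independent of $\omega$.
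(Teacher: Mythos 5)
Your proposal follows essentially the same route as the paper: the volume bound comes from $\frac{d}{dt}|{\mathcal R}_t(x,\omega)|=\int_{\partial {\mathcal R}_t(x,\omega)}(1-\lg V,\nu\rg)\,d{\mathcal H}^{N-1}$ combined with the isoperimetric inequality, and the perimeter bound from an averaging/pigeonhole argument on $[\gamma_1 t,\gamma_2 t]$ against the trivial upper bound $|{\mathcal R}_{\gamma_2 t}|\leq |B(x,M\gamma_2 t)|$ (the paper phrases this as a contradiction, which is the same thing). One correction, though: your closing paragraph asserts that the mean-zero hypothesis \eqref{eq:mz} and an ergodic argument are needed to make the volume expand at a rate independent of $\omega$. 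They are not, and the paper uses neither here. Since ${\mathcal R}_t(x,\omega)$ has finite perimeter and $V$ is divergence free, the flux term $\int_{\partial {\mathcal R}_t}\lg V,\nu\rg\,d{\mathcal H}^{N-1}$ vanishes identically, so one has the exact, purely deterministic identity $\frac{d}{dt}|{\mathcal R}_t(x,\omega)|={\rm Per}({\mathcal R}_t(x,\omega))\geq c_I^{-1}|{\mathcal R}_t(x,\omega)|^{(N-1)/N}$, valid for every $\omega$ in the full-measure set where $V(\cdot,\omega)\in{\mathcal C}^{1,1}$; integrating gives $\beta=(Nc_I)^{-N}$ depending only on $N$. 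This same identity is what makes your perimeter argument clean: you do not need a ``lower speed bound on the moving part'' --- the drift contributes exactly zero to the volume growth, so $\int_{\gamma_1 t}^{\gamma_2 t}{\rm Per}({\mathcal R}_s)\,ds=|{\mathcal R}_{\gamma_2 t}|-|{\mathcal R}_{\gamma_1 t}|$ on the nose. (Also, your variation-of-constants formula $X_t=\Phi_t(\int_0^t\Phi_s^{-1}\alpha\,ds)$ is exact only for linear fields; for the structural claim the paper instead invokes the interior ball property of attainable sets from Cannarsa--Frankowska.)
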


{\bf Proof : } We only present the proof for $t>0$ since the argument  for $t<0$ is similar.

Throughout the proof we fix $\omega$ such that $V(\cdot,\omega) \in {\mathcal C}^{1,1}(\R^N)$ and a horizon $T>0$.
It follows from the regularity of $V(\cdot,\omega)$ (see Cannarsa and Frankowska \cite{cf06}) that,
for any $0<\tau<T$, there exits a constant $r=r(\tau, T)>0$ such that, for all $t\in [\tau, T]$, ${\mathcal R}_t(x,\omega)$
has the interior ball property of radius $r$, i.e.,
for all $z\in \partial {\mathcal R}_t(x,\omega)$,
there exists $y\in \R^N$ such that $z\in B(y,r) \subset {\mathcal R}_t(x,\omega).$
In particular, for all $t>0$, the set
${\mathcal R}_t(x,\omega)$ has a non empty interior and a finite perimeter.

Arguing exactly as in the proof of Lemma 3.2 of \cite{cns} we find that, for any $\varphi\in {\mathcal C}^1_c(\R^N)$, the map
$t\to
I(t)=\int_{{\mathcal R}_t(x,\omega)}\varphi(y)dy,
$
is absolutely continuous and, for almost all $t\geq0$,
$$
\frac{d}{dt} I(t)=\int_{ \partial {\mathcal R}_t(x,\omega)}\varphi(y)(1-\lg V(y,\omega),\nu_y\rg )d{\mathcal H}^{N-1}(y),
$$
where $\nu_y$ is the measure theoretic outward unit normal of
${\mathcal R}_t(x,\omega)$ at $y$.

Choose $\varphi\in{\mathcal C}^1_c(\R^N)$ such that $\varphi=1$ in $B(x, R)$
where $R> MT$. It follows from \eqref{eq:reach1} that 
$I(t)=|{\mathcal R}_t(x,\omega)|$ and
$$
\frac{d}{dt} |{\mathcal R}_t(x,\omega)| =\int_{ \partial {\mathcal R}_t(x,\omega)}(1-\lg V(y,\omega)), \nu_y\rg)d{\mathcal H}^{N-1}(y).
$$

The facts that  $V$ is divergence free and ${\mathcal R}_t(x,\omega)$ has a finite perimeter yield
$$
\int_{ \partial {\mathcal R}_t(x,\omega)}\lg V(y,\omega)), \nu_y\rg d{\mathcal H}^{N-1}(y)=0.
$$

Let $c_I$ be the constant in the isoperimetric inequality in $\R^N$, i.e., $c_I$ is the smallest constant such that  $|E|^{(N-1)/N}\leq c_I {\mathcal H}^{N-1}(\partial E)$ for any Borel set $E\subset\R^N$. It follows that
$$
{\mathcal H}^{N-1}(\partial {\mathcal R}_t(x,\omega)) \geq \frac{1}{c_I}|{\mathcal R}_t(x,\omega)|^{(N-1)/N}.
$$

Thus
$$
\frac{d}{dt} |{\mathcal R}_t(x,\omega)| \geq \frac{1}{c_I}|{\mathcal R}_t(x,\omega)|^{(N-1)/N}\;,
$$
and, since $|{\mathcal R}_t(x,\omega)|>0$ for any $t>0$,
$$
|{\mathcal R}_t(x,\omega)|\geq \left(\frac{t}{Nc_I}\right)^N\qquad \text{ for all \  $t>0$}.
$$

A slight modification of the above argument yields the estimate for the perimeter.
Indeed, assume that, for some constant $K>0$ to be chosen later,
$$
{\rm Per}({\mathcal R}_s(x,\omega)) \geq Ks^{N-1} \ \  \text{ for all} \ \ s \in [\gamma_1 t, \gamma_2t].
$$

Since 
$$
\frac{d}{ds} |{\mathcal R}_s(x,\omega)| ={\rm Per}({\mathcal R}_s(x,\omega)),
$$
integrating over  $[\gamma_1 t, \gamma_2t]$ and using the lower bound on the perimeter we get
$$
|{\mathcal R}_{\gamma_2 t}(x,\omega)| \geq \frac{K}{N} (\gamma_2^N-\gamma_1^N) t^N\;,
$$
while
$$
\left|{\mathcal R}_{\gamma_2 t}(x,\omega)\right| \leq |B(x, M\gamma_2t)|\leq (M\gamma_2t)^N|B(0,1)|\;.
$$
This leads to a contradiction if $K$ is chosen so that
$$
K> N|B(0,1)| \ \frac{(M\gamma_2)^N}{\gamma_2^N-\gamma_1^N}\;.
$$
\QED

As a consequence we have:
\begin{lem} \label{ReachEom}
There exist constants $\delta_0\in (0,1)$ and $k>0$, and, for any $\delta\in (0,\delta_0)$ and any measurable set $E\subset \Omega$
with $\P[E]\geq 1-\delta$, a subset $\Omega'\subseteq\Omega$ of full probability such that, for all $\omega\in \Omega'$, there exists a constant $K=K(E, \omega,\delta)>0$ such that, for all $x\in\R^N$,
$$
\left| {\mathcal R}_t(x,\omega) \cap E(\omega)\right|>0\qquad {\rm for }\; t=\pm(K+k\delta^{1/N}|x|)\;,
$$
and, in particular, ${\mathcal R}_t(x,\omega) \cap E(\omega)\neq \emptyset$.
\end{lem}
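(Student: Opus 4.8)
The plan is to obtain the claim by a direct volume comparison, using from Lemma~\ref{lem:VolR} only the lower bound $|{\mathcal R}_t(x,\omega)|\ge\beta|t|^N$ (the perimeter estimate will not be needed here) together with the spatial ergodic theorem applied to the stationary indicator of $E$. Concretely, I would first intersect the full-probability set provided by Lemma~\ref{lem:VolR} with the full-probability set on which the ergodic theorem gives
$$
\lim_{R\to+\infty}\frac{|E(\omega)\cap B(0,R)|}{|B(0,R)|}=\P[E]\ge 1-\delta ,
$$
and call the resulting set $\Omega'$. Since $\R^N\setminus E(\omega)=(\Omega\setminus E)(\omega)$ has density at most $\delta$, for every $\omega\in\Omega'$ there is a radius $R_0=R_0(E,\omega,\delta)>0$ with $|(\Omega\setminus E)(\omega)\cap B(0,R)|\le 2|B(0,1)|\,\delta\,R^N$ for all $R\ge R_0$.

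The core step is the following. Fix $\omega\in\Omega'$ and $x\in\R^N$, let $k>0$ be a constant depending only on $N$ (to be fixed below), and set $t=K+k\delta^{1/N}|x|$, where $K=K(E,\omega,\delta)>0$ is chosen so that $MK\ge R_0$. Assume, for contradiction, that $|{\mathcal R}_t(x,\omega)\cap E(\omega)|=0$. Since $E(\omega)$ and $(\Omega\setminus E)(\omega)$ partition $\R^N$, and since ${\mathcal R}_t(x,\omega)\subseteq B(x,Mt)\subseteq B(0,|x|+Mt)$ by \eqref{eq:reach1}, while $|{\mathcal R}_t(x,\omega)|\ge\beta t^N$ by Lemma~\ref{lem:VolR}, this assumption would force
$$
\beta\, t^N\ \le\ \bigl|(\Omega\setminus E)(\omega)\cap B(0,|x|+Mt)\bigr|\ \le\ 2|B(0,1)|\,\delta\,(|x|+Mt)^N ,
$$
the last inequality using $|x|+Mt\ge MK\ge R_0$; equivalently $\beta^{1/N}t\le (2|B(0,1)|\,\delta)^{1/N}(|x|+Mt)$. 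On the other hand, I would choose $\delta_0\in(0,1)$ small enough that $(2|B(0,1)|)^{1/N}M\,\delta^{1/N}\le\frac{1}{2}\beta^{1/N}$ for all $\delta<\delta_0$, and then take $k$, depending only on $N$, with $\frac{1}{2}\beta^{1/N}k>(2|B(0,1)|)^{1/N}$. Substituting $t=K+k\delta^{1/N}|x|$ and comparing, on the two sides of the displayed inequality, the coefficient of $|x|$ and the constant term then shows that in fact $\beta^{1/N}t>(2|B(0,1)|\,\delta)^{1/N}(|x|+Mt)$ for every $K>0$ — contradicting the previous line. Hence $|{\mathcal R}_t(x,\omega)\cap E(\omega)|>0$, and a fortiori the intersection is nonempty.

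The case $t=-(K+k\delta^{1/N}|x|)$ is treated identically, since the backward reachable set ${\mathcal R}_{-t}(x,\omega)$ obeys the same volume lower bound (Lemma~\ref{lem:VolR} is stated for all $t\in\R\setminus\{0\}$) and the same inclusion in $B(x,Mt)$ (the controlled field $\alpha-V$ has speed bounded by $M$ as well). I do not expect a genuine obstacle in this lemma; the one point requiring care is the order of the quantifiers: $\delta_0$ and $k$ must be fixed before $E$, $\delta$ and $\omega$ are chosen, while $K$ is allowed to depend on $E,\omega,\delta$ but must be uniform in $x$ — and the argument respects this because the only demand on $K$ is $MK\ge R_0(E,\omega,\delta)$, the coefficient comparison being valid for all $K>0$. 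The only mildly delicate point is ensuring the ergodic-theorem threshold $R_0$ is always reached by $|x|+Mt$, which is automatic since $t\ge K$.
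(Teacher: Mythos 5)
Your proof is correct and follows essentially the same strategy as the paper's: intersect the full-measure set from Lemma~\ref{lem:VolR} with the full-measure set where the spatial ergodic theorem gives the density of $E(\omega)$, then run a volume comparison by contradiction using the lower bound $|{\mathcal R}_t(x,\omega)|\geq\beta t^N$ against the inclusion ${\mathcal R}_t(x,\omega)\subseteq B(0,|x|+Mt)$. The paper works with cubes $Q_R$ rather than balls and picks $R$ and $t$ explicitly so that the inequality $t>R(2\delta/\beta)^{1/N}$ closes the argument, whereas you compare coefficients of $|x|$ and the constant term after substituting $t=K+k\delta^{1/N}|x|$; both bookkeeping choices yield the same constants up to normalization, and your reading of the quantifier order ($\delta_0,k$ first, then $E,\delta,\omega$, with $K$ depending on $E,\omega,\delta$ but uniform in $x$) is exactly the one the paper relies on.
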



\noindent {\bf Proof : } We only present the proof for $t={K+k\delta^{1/N}|x|}$. Since $\P[E]\geq 1-\delta$, the ergodic theorem yields $\Omega'\subseteq\Omega$ of full probability such that, for all   $\omega\in\Omega'$,  there exists some $R_0=R_0(\omega)>0$ such that, for all  $R\geq R_0$,
\be\label{VolECapQR}
 \left| Q_R\cap E(\omega)\right| \geq (1-2\ep)|Q_R| \;. 
\ee

Also note that, for all $t\geq0$,
$$
{\mathcal R}_t(x,\omega)\subset Q_{|x|+Mt} 
$$

Choose $R=|x|(1+2M(2\delta/\beta)^{1/N})+R_0$ and $t =(R-|x|)/M$, where $\beta$ is defined in  Lemma \ref{lem:VolR}.
Then
$$
t= R_0/M+ 2(2\delta/\beta)^{1/N}|x|\;,
$$
which is of the form $K+k\delta^{1/N}|x|$ as in the claim. It follows that, as soon as $2M(2\delta/\beta)^{1/N}< 1$, an inequality that fixes $\delta_0\in (0,1)$,
\be\label{t>R}
t > R(2\delta/\beta)^{1/N}.
\ee

Indeed
\begin{eqnarray*}
t =(R-|x|)/M > 2|x|\left(\frac{2\delta}{\beta}\right)^{1/N} = \frac{R-R_0}{  1+2M(2\delta/\beta)^{1/N}}2\left(\frac{2\delta}{\beta}\right)^{1/N}\\
 \qquad > R \left(\frac{2\delta}{\beta}\right)^{1/N} \frac{2}{1+2M(2\delta/\beta)^{1/N}} > R \left(\frac{2\delta}{\beta}\right)^{1/N}
 \end{eqnarray*}

We now claim that $\left| {\mathcal R}_t(x,\omega) \cap E(\omega)\right|>0$.
If not, since ${\mathcal R}_t(x,0)\subset Q_{|x|+Mt}=Q_R$, 
$$
R^N=|Q_R|\geq \left|{\mathcal R}_t(x,0)\cup (E(\omega)\cap Q_R)\right|=
\left|{\mathcal R}_t(x,0)\right|+ \left|E(\omega)\cap Q_R\right|\;,
$$
while, in view of  Lemma~\ref{lem:VolR} and (\ref{VolECapQR}) and (\ref{t>R}),
$$
\left|{\mathcal R}_t(x,0)\right|+ \left|E(\omega)\cap Q_R\right|
\geq
\beta t^N + (1-2\delta) |Q_R| > R^N\;,
$$
which is not possible. 
\QED

We continue with a technical lemma (Lemma \ref{SubSolOmega}) which does not require $V$ to have mean $0$ and is essential for the sequel, as it serves as a tool later in the paper to prove that some random quantities are actually independent of $\omega$. It also provides a link between the ergodic properties of the flow \eqref{eq:DS}  and time-independent super-solutions of the $G$-equation.

Before the statement and proof we recall the notions of inf-convolution and sup-convolution which are a very basic tool of the theory of viscosity solutions (see \cite{cil}). To this end, let $f:\R^N\to \R$ be bounded and lower semicontinuous (to define the inf-convolution) or upper semicontinuous (to define the sup-convolution). For each $\eta>0$ the inf-convolution $f_\eta:\R^n\to\R$ and the sup-convolution $f^\eta$ of $f$ are
\be\label{eq:conv}
f_\eta(x)=\inf_{y\in\R^N}\{f(y) + (2\eta)^{-1}|x-y|^2\} \ \ \text{ and } \ \ f^\eta(x)=\sup_{y\in\R^N}\{f(y) - (2\eta)^{-1}|x-y|^2\}.
\ee

It turns out that $f_\eta$ (resp. $f^\eta$) is a bounded, Lipschitz continuous, semiconvex (resp. semiconcave) approximation from below (resp. above), as $\eta\to0$, of $f$ which also preserves the notion of supersolution (resp. subsolution).

We also need the following technical remark concerning the properties of the flow \eqref{eq:DS}.

\begin{lem}\label{lem:cone} There exists $\eta>0$ such that, for all $x\in \R^N$ and a.s. in $\omega$, the open cone
$$
C_\eta(x)=\{ x+s(-V(x,\omega)+\eta b)\; : \; s\in (0,\eta), \; b\in B\}
$$
is contained in  $\bigcup_{t\in(0,1)} {\mathcal R}_{-t}(x,\omega)$.
\end{lem}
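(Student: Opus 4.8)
The plan is to prove the inclusion pointwise: given an arbitrary point $y\in C_\eta(x)$, I will produce an admissible control $\alpha\in{\mathcal A}$ and a time $s\in(0,1)$ such that the trajectory of the reversed system $z'=\alpha-V(z,\omega)$ started at $x$ reaches $y$ at time $s$, which is exactly the statement $y\in{\mathcal R}_{-s}(x,\omega)\subseteq\bigcup_{t\in(0,1)}{\mathcal R}_{-t}(x,\omega)$. The idea is to \emph{prescribe the trajectory to be the straight segment} from $x$ to $y$ and then read off the control it requires; the cone condition defining $C_\eta(x)$ is precisely what forces this control to stay in the closed unit ball $B$.

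Concretely, write $y=x+s(-V(x,\omega)+\eta b)$ with $s\in(0,\eta)$ and $b\in B$, and consider the affine curve $\gamma(\sigma)=x+\sigma(-V(x,\omega)+\eta b)$ for $\sigma\in[0,s]$, so that $\gamma(0)=x$ and $\gamma(s)=y$. Set
\[
\alpha(\sigma)=\gamma'(\sigma)+V(\gamma(\sigma),\omega)=\eta b+\bigl(V(\gamma(\sigma),\omega)-V(x,\omega)\bigr)\quad\text{on }[0,s],
\]
extended by $0$ outside $[0,s]$. Then $\gamma$ solves $\gamma'(\sigma)=\alpha(\sigma)-V(\gamma(\sigma),\omega)$ with $\gamma(0)=x$, and since $V(\cdot,\omega)$ is Lipschitz (a.s. in $\omega$) the Cauchy--Lipschitz uniqueness theorem identifies $\gamma$ with the unique trajectory of the reversed system driven by $\alpha$; hence $y\in{\mathcal R}_{-s}(x,\omega)$ as soon as $\alpha\in{\mathcal A}$, i.e. $|\alpha(\sigma)|\le1$ for all $\sigma$. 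To verify this, let $L$ be the Lipschitz constant of $V(\cdot,\omega)$ and recall $M=\|V\|+1$; using $|\gamma(\sigma)-x|\le s\bigl|-V(x,\omega)+\eta b\bigr|\le s(\|V\|+\eta)$ together with $|b|\le1$ and $s<\eta\le1$,
\[
|\alpha(\sigma)|\le\eta|b|+L\,|\gamma(\sigma)-x|\le\eta+L\,s(\|V\|+\eta)\le\eta\bigl(1+LM\bigr).
\]
Choosing $\eta$ with $\eta(1+LM)\le1$ makes $\alpha$ admissible, and then $s\in(0,\eta)\subset(0,1)$ gives $y\in\bigcup_{t\in(0,1)}{\mathcal R}_{-t}(x,\omega)$. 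Since $y\in C_\eta(x)$ was arbitrary, this is the claim.

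The only point needing a little care --- and essentially the only substantive one --- is that the \emph{same} $\eta$ must work for every $x$ and almost every $\omega$. This is not an issue: by stationarity and ergodicity (cf. the remarks in Section~\ref{sec:preliminaries}) the norm $\|V\|=\|V(\cdot,\omega)\|$ and the quantity $L=\|V(\cdot,\omega)\|_{{\mathcal C}^{1,1}}$ --- an upper bound for the Lipschitz constant --- are a.s. equal to deterministic constants, so one simply fixes $\eta:=\bigl(2(1+LM)\bigr)^{-1}$ once and for all and runs the construction above on the full-measure event on which $V(\cdot,\omega)\in{\mathcal C}^{1,1}(\R^N;\R^N)$ with these norm values. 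I do not expect any real obstacle here: the lemma is an elementary small-time local controllability statement, and its whole content is the straight-line construction together with the observation that, because the full control ball $B$ is available, a cone of directions of half-width $\eta$ around the drift $-V(x,\omega)$ is instantaneously reachable.
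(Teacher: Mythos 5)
Your proof is correct and follows essentially the same route as the paper's: parametrize the straight segment $X_s=x+s(-V(x,\omega)+\eta b)$, define the control as $\alpha_s=X_s'+V(X_s,\omega)$, and bound $|\alpha_s|\le \eta+\|DV\|\,s(\|V\|+\eta)\le \eta(1+\|DV\|\,M)$ so that a small, deterministic choice of $\eta$ makes the control admissible. Your added remark that $\|V\|$ and the Lipschitz constant are a.s.\ deterministic (hence $\eta$ is uniform in $x$ and $\omega$) is a point the paper leaves implicit, relying on the convention from Section~\ref{sec:preliminaries}.
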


\noindent {\bf Proof : } For $b\in B$, $s\in [0,\eta]$ and $\eta\in (0,1)$ to be chosen below, let $X_s= x+ s(-V(x,\omega)+\eta b)$. Then
$$
|X_s'+V(X_s,\omega)| \leq \|DV\||X_s-x|+ \eta \leq ( \|DV\|(\|V\|+\eta)+1) \eta \;.
$$
We choose  $\eta$ such that the right-hand side is less than one. Then, setting $\alpha_s= X_s'+ V(X_s,\omega)$, we have $\alpha\in {\mathcal A}$  and $X_s=X_{-s}^{x,0,\alpha, \omega}$. Therefore $X_s\in {\mathcal R}_{-s}(x,\omega)$ for $s\in (0,\eta)$.
\QED

We have:

\begin{lem}\label{SubSolOmega} Assume \eqref{eq:ass1} and \eqref{eq:ass2}. (i) Let $w:\R^N\times \Omega\to \R$ be a bounded, stationary, lower semicontinuous, super-solution of
\be\label{HJineq}
0 \geq |Dw|+\lg V,Dw\rg \qquad {\rm in}\; \ \R^N\;.
\ee
Then $w$ is constant with respect to both $x$ and $\omega$, i.e, there exists a set  $\Omega_0\subseteq \Omega$ of full probability and a constant $c$ such that
$w(x,\omega)=c$ for all $(x,\omega)\in \R^N\times \Omega_0$.

\indent (ii) If $w:\R^N\times \Omega\to \R$ is bounded, stationary and, a.s. in $\omega$, for any control $\alpha\in {\mathcal A}$, any $x\in \R^N$
 the map $t\to w(X_t^{x,0,\alpha,\omega},\omega)$ is nonincreasing, then $w$
is constant, i.e.,  there exists a set $\Omega_0\subseteq \Omega$ of full probability and a constant $c$ such that
$w(x,\omega)=c$ for all $(x,\omega)\in \R^N\times \Omega_0$.
\end{lem}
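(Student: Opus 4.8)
The plan is to prove (i) first and then deduce (ii) from it by an approximation argument. For part (i), the strategy is to show that a bounded stationary supersolution $w$ of \eqref{HJineq} must be nonincreasing along \emph{every} trajectory of the controlled backward flow $x' = \alpha - V(x,\omega)$, i.e.\ along the dynamics generating the sets ${\mathcal R}_{-t}(x,\omega)$; this is the standard control-theoretic interpretation of \eqref{HJineq} since the Hamiltonian $|p| + \langle V,p\rangle = \sup_{\alpha\in B}\langle \alpha - (-V), -p\rangle$-type reformulation ties the supersolution property of \eqref{HJineq} to monotonicity of $w$ along the system $x'=\alpha-V$. Concretely: for a smooth test trajectory, $\frac{d}{dt} w(X_{-t}) \le 0$ follows from $0 \ge |Dw| + \langle V, Dw\rangle \ge \langle Dw, \alpha\rangle - \langle V, Dw\rangle = \langle Dw, \alpha - V\rangle$ for every $\alpha \in B$; in the viscosity sense one runs the usual argument (first regularize $w$ by inf-convolution $w_\eta$, which remains a supersolution up to a vanishing error, deduce monotonicity for $w_\eta$ along Lipschitz trajectories by Lebesgue differentiation, then pass $\eta \to 0$). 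This is exactly where Lemma~\ref{lem:cone} enters: it guarantees that from any $x$ the backward reachable set $\bigcup_{t\in(0,1)}{\mathcal R}_{-t}(x,\omega)$ contains an open cone $C_\eta(x)$ with nonempty interior, so $w$ being nonincreasing along backward trajectories forces $w(x,\omega) \ge \sup_{y\in C_\eta(x)} w(y,\omega)$.

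With backward monotonicity in hand, I would argue that the random variable $\omega \mapsto \operatorname{ess\,sup}_{x} w(x,\omega)$ — equivalently, by stationarity, $w(0,\cdot)$ composed suitably — is essentially constant. The clean way is: let $m(\omega) = \operatorname*{ess\,sup}_{x\in\R^N} w(x,\omega)$; by stationarity of $w$, $m$ is invariant under $\tau_z$ for all $z$, hence by ergodicity \eqref{eq:ass1}, $m(\omega) = m$ is a deterministic constant. Now fix $\omega \in \Omega_0$ and a point $x$ with $w(x,\omega)$ close to $m$. Using that $w$ is nonincreasing along the backward flow and that the backward reachable sets from $x$, by the controllability estimate (apply Theorem~\ref{reach} to the field $-V$, which also satisfies \eqref{eq:mz}), eventually have positive measure intersection with any set of the form $\{w(\cdot,\omega) < m - \delta\}$ if that set had positive density — one derives a contradiction unless $w(\cdot,\omega) \equiv m$ a.e., and then everywhere by lower semicontinuity combined with the supersolution (hence ``almost continuity'' from below along trajectories) property. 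Symmetrically, replacing $w$ by considering $\operatorname{ess\,inf}$ via the same backward-flow monotonicity (the $\operatorname{ess\,inf}$ is approached at ``late'' backward times and is $\tau$-invariant, hence constant $\underline m$), and then the backward-flow monotonicity $w(X_{-t}) \downarrow$ together with boundedness forces $m = \underline m$. Hence $w$ is a.s.\ the constant $c = m$.

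For part (ii), the hypothesis is precisely the conclusion one extracts in part (i) from \eqref{HJineq} — namely monotonicity along \emph{forward} trajectories of $x' = \alpha + V(x,\omega)$, which is the $G$-equation's own dynamics — so the same argument runs verbatim with forward reachable sets ${\mathcal R}_t(x,\omega)$ in place of backward ones, now invoking Theorem~\ref{reach} directly and Lemma~\ref{ReachEom} to know that ${\mathcal R}_t(x,\omega)$ meets, with positive measure, any fixed set $E(\omega)$ of large enough probability; one takes $E$ to be (a stationarized version of) $\{w(0,\cdot) > c + \delta\}$ or $\{w(0,\cdot) < c - \delta\}$ and forces $\P[E] \in \{0\}$. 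Alternatively, and more cheaply, (ii) follows from (i) by noting that if $t\mapsto w(X_t^{x,0,\alpha,\omega},\omega)$ is nonincreasing for all $\alpha$, then $w$ is a viscosity supersolution of \eqref{HJineq} (this is a standard equivalence between monotonicity of a function along all trajectories of a control system and the supersolution property of the associated Bellman/eikonal inequality), and part (i) applies. I would present it this way to avoid repetition.

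The main obstacle I expect is the rigorous passage, in the viscosity-solution setting, from the differential inequality \eqref{HJineq} to genuine monotonicity along (merely Lipschitz) backward trajectories, together with handling the lower semicontinuity of $w$ carefully — this is where the inf-convolution regularization \eqref{eq:conv}, the interior-cone property of Lemma~\ref{lem:cone}, and the controllability estimate of Theorem~\ref{reach} must be combined, and where one must be careful that the ``essential supremum equals pointwise supremum'' step actually holds. The ergodicity step itself is routine once monotonicity and the $\tau$-invariance of $m(\omega)$ are established.
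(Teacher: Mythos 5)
Your argument for part~(i) invokes Theorem~\ref{reach}, and this cannot be done here for two independent reasons. First, it is circular: the proof of Theorem~\ref{reach} relies on Lemma~\ref{lem:RtB01}, hence on Lemma~\ref{lem:B01subsetE-}, Lemma~\ref{lem:convex} and ultimately on Lemma~\ref{lem:E+E-indepom}, whose proof applies Lemma~\ref{SubSolOmega}(ii). Second, the present Lemma is stated under \eqref{eq:ass1} and \eqref{eq:ass2} only, whereas Theorem~\ref{reach} (and the chain leading to it) requires the mean-zero hypothesis \eqref{eq:mz}; the paper explicitly remarks that Lemma~\ref{SubSolOmega} ``does not require $V$ to have mean $0$,'' precisely because it is used earlier in the reachability machinery. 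The paper's actual proof of~(i) bypasses controllability entirely and is an integration-by-parts argument: take the inf-convolution $w_\theta$ (as you do), which is Lipschitz and satisfies a.e.\ in $x$ and a.s.\ in $\omega$ the inequality $0 \geq (1-\delta)|Dw_\theta| + \langle V, Dw_\theta\rangle$; integrate over $B(0,R)$; since $\operatorname{div} V = 0$ and $w_\theta$ is bounded, $\int_{B_R}\langle V,Dw_\theta\rangle$ reduces to a boundary term of size $O(R^{N-1})$, so $\int_{B_R}|Dw_\theta| \leq C R^{N-1}$; divide by $|B_R|$, send $R\to\infty$, and the ergodic theorem applied to the stationary bounded function $|Dw_\theta|$ gives $\E[|Dw_\theta|]=0$, whence $w_\theta$ is constant, and one concludes by letting $\theta\to 0$. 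The key idea your proposal misses is that the divergence-free structure, not controllability or a supremum/ergodicity argument, is what drives~(i).

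Your ``cheaper'' route for part~(ii) also has a genuine gap: $w$ is \emph{not} assumed lower semicontinuous in (ii), so monotonicity along trajectories does not directly upgrade to ``$w$ is a viscosity supersolution of \eqref{HJineq}.'' The paper's fix is to pass to the lower semicontinuous envelope $w_*$, which inherits the monotonicity and is a legitimate l.s.c.\ supersolution; part~(i) then gives $w_*\equiv c$; and Lemma~\ref{lem:cone} is used to rule out $w(x,\omega)>c+\ep$ at any point, since such a point forces $w>c+\ep$ on the whole open cone $C_\eta(x)\subset\bigcup_{t\in(0,1)}{\mathcal R}_{-t}(x,\omega)$ (by the nondecreasing property along backward trajectories), hence $w_*\geq c+\ep$ somewhere, a contradiction. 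So Lemma~\ref{lem:cone} does enter the proof, but only in this $w\to w_*$ step of~(ii) --- not, as you propose, to derive monotonicity in~(i).
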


\noindent {\bf Proof: }(i) To prove the claim we need to use  the ergodic theorem which requires some integration. On the other hand \eqref{HJineq} holds only in the viscosity sense. It is therefore necessary to regularize the $w$ so that we can have an inequality like  \eqref{HJineq} holding a.e. in $x$ and a.s. in $\omega$. Following an argument used in \cite{XY} for the periodic setting,  for $\theta>0$ we consider the inf-convolution $w_\theta$ of $w$ and observe that,
for any $\delta>0$ small, it is possible to choose $\theta$ sufficiently small (depending on $\delta$) so  that $w_\theta(\cdot,\omega)$ is Lipschitz continuous uniformly in $\omega$ and satisfies, a.e. in $x$ and a.s. in $\omega$,
$$
0\geq (1-\delta)| Dw_\theta| + \lg V,Dw_\theta\rg \;.
$$
Integrating over $B(0,R)$ and using that $w_\theta$ is bounded and $V$ is divergence free yields
$$
0\geq (1-\delta) \int_{B(0,R)} | Dw_\theta|-CR^{N-1}\;.
$$
Since $Dw_\theta$ is stationary and bounded, dividing the above inequality by $|B(0,R)|$ and letting $R\to +\infty$, we find, making use of  the ergodic theorem, that
$$\E[|Dw_\theta|]=0.$$ This implies that $w_\theta$ is constant, i.e.,  there exits $\Omega_\theta\subseteq \Omega$ of full probability and $c_\theta\in \R$ such that $w_\theta(x,\omega)=c_\theta$ for any $(x,\omega)\in \R^N\times \Omega_\theta$. Choosing $\theta_n\to 0$ such that $c=\lim_n c_{\theta_n}$ (recall that the $w_\theta$'s are bounded independently of $\theta$) exists and setting $\Omega_0=\bigcap_n \Omega_{\theta_n}$, we obtain that $\P[\Omega_0]=1$ and $w(x, \omega)=c$ for any $(x,\omega)\in \R^N\times \Omega_0$.

(ii) It follows 
that the lower semicontinuous envelope of $w_*$ of $w$ is also  bounded, stationary and such that, a.s.,
the map $t\to w_*(X_t^{x,0,\alpha,\omega},\omega)$ is non-increasing for all controls $\alpha\in {\mathcal A}$ and $x\in \R^N$.
This implies that $w_*$ is a lower semicontinuous viscosity super-solution of (\ref{HJineq}). 

According to the first part of the Lemma, $w_*$ is constant, i.e.,
 there exists a set of full measure $\Omega_0\subseteq \Omega$ and a constant $c$ such that \
$w_*(x,\omega)=c$ \ for all $(x,\omega)\in \R^N\times \Omega_0$.  We claim that $w$ must be constant itself, i.e., that \ $w(x,\omega)=c$ \
for all $(x,\omega)\in \R^N\times \Omega_0$. Indeed if not, there must exist some $(x,\omega)\in \R^N\times \Omega_0$ and $\ep>0$ such that $w(x,\omega)> c+\ep$. Note that the set $\{\omega\in\Omega: w(y, \omega)>c+\ep\}$ is invariant for the map $X_{-t}^{y,0, \alpha,\omega}$ for all $y$ and $\alpha\in{\mathcal A}$, because
the map $t\to w(X_{-t}^{y,0, \alpha,\omega}, \omega)$ is nondecreasing. Lemma \ref{lem:cone} states that  there exists some $\eta>0$ such that, for all  $x$ and $\omega$, the small open cone $C_\eta(x)=x+(0,\eta)(-V(x,\omega)+\eta B)$  is contained in $\bigcup_{t\in (0,1)} {\mathcal R}_{-t}(x,\omega)$. Then $w(\cdot,\omega)>c+\ep$ in $C_\eta(x)$, and, hence, $w_*(\cdot,\omega)\geq c+\ep$ in $C_\eta(x)$. This is impossible since $w_*=c$.
\QED

Having basically concluded with all the preliminaries we now turn to the essential part of this section, i.e., the long time averaged properties of the reachable set ${\mathcal R}_t(x,\omega)$. To this end,
let
$$
{\mathcal E}^+(x,\omega) \;  = \; \{ z\in \R^N : \text{there exist} \; t_n\to +\infty \; \text{ and } \;  z_n\in \frac{1}{t_n}{\mathcal R}_{t_n}(x,\omega)\; \text{ such  that } \  z_n\to z\}
$$
and
$$
{\mathcal E}^-(x,\omega)\;  = \; \{ z\in \R^N : \text{ there exist } \  z_t\in \frac{1}{t}{\mathcal R}_t(x,\omega)\; \text{ such  that  } \  z_t\to z\; {\rm as}
\; t\to +\infty\}
$$
be the upper and lower Kuratowski limits, as $t\to\infty$,  of the sets ${\mathcal R}_t(x,\omega)/t$. It follows easily that
${\mathcal E}^\pm(x,\omega)$ are compact subsets of $\R^N$ and, in addition, that
${\mathcal E}^+(x,\omega)$ is nonempty. Our aim is to show that (i) the sets ${\mathcal E}^\pm(x,\omega)$
are in fact independent of $x$ and $\omega$, and (ii) ${\mathcal E}^-(x,\omega)$ is convex and contains the closed unit ball $B$ of $\R^N$. The proof of the equality  between ${\mathcal E}^+$ and ${\mathcal E}^-$ is established in the next section (Theorem \ref{thm:TimeConstant}). \\

We have:

\begin{lem}\label{lem:E+E-indepom} There exist compact sets ${\mathcal E}^\pm \subset \R^N$ and a set $\Omega_0\subseteq \Omega$ of full probability  such that, for all $(x,\omega)\in \R^N\times \Omega_0$,
$
{\mathcal E}^\pm(x,\omega) = {\mathcal E}^\pm .
$
\end{lem}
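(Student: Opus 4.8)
The plan is to derive the statement from two ingredients: the monotonicity of $\omega\mapsto{\mathcal E}^\pm(\cdot,\omega)$ along the trajectories of \eqref{eq:DS}, and the rigidity Lemma~\ref{SubSolOmega}(ii). This route uses neither \eqref{eq:mz} nor the controllability estimate of Theorem~\ref{reach}; it rests only on the semigroup structure of the reachable sets and on \eqref{EqTau}.

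\emph{Step 1 (monotonicity along trajectories).} First I would record the elementary semigroup inclusion: if $x'=X^{x,0,\alpha_0,\omega}_\tau$ for some $\alpha_0\in{\mathcal A}$ and $\tau\ge0$, then, concatenating $\alpha_0$ on $[0,\tau]$ with any control steering $x'$ to a prescribed target on $[\tau,\tau+s]$ and using that \eqref{eq:DS} is autonomous, one gets ${\mathcal R}_s(x',\omega)\subseteq{\mathcal R}_{s+\tau}(x,\omega)$ for all $s\ge0$. Dividing by $s+\tau$ and passing to the Kuratowski limits gives
\[
{\mathcal E}^\pm(x',\omega)\subseteq{\mathcal E}^\pm(x,\omega)\qquad\text{whenever }\ x'\in\textstyle\bigcup_{\tau\ge0}{\mathcal R}_\tau(x,\omega).
\]
Indeed, for ${\mathcal E}^+$, given $t_n\to+\infty$, $t_nz_n\in{\mathcal R}_{t_n}(x',\omega)$ and $z_n\to z$, one uses $t_nz_n\in{\mathcal R}_{t_n+\tau}(x,\omega)$ together with $\tfrac{t_n}{t_n+\tau}z_n\to z$; for ${\mathcal E}^-$, given a family $(z_t)_{t\ge t_0}$ with $tz_t\in{\mathcal R}_t(x',\omega)$ and $z_t\to z$, one uses, for every $s\ge t_0+\tau$, the point $\tfrac{s-\tau}{s}z_{s-\tau}\in\tfrac1s{\mathcal R}_s(x,\omega)$, which converges to $z$ as $s\to+\infty$. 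Applying this with $x$ and $x'$ replaced by $X^{x,0,\alpha,\omega}_{t_1}$ and $X^{x,0,\alpha,\omega}_{t_2}$ for $t_1\le t_2$ (note $X^{x,0,\alpha,\omega}_{t_2}=X^{X^{x,0,\alpha,\omega}_{t_1},0,\beta,\omega}_{t_2-t_1}$ with $\beta=\alpha(\cdot+t_1)$), it follows that, for every $x$, every $\alpha\in{\mathcal A}$ and a.s.\ in $\omega$, the map $t\mapsto{\mathcal E}^\pm(X^{x,0,\alpha,\omega}_t,\omega)$ is nonincreasing for inclusion.

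\emph{Step 2 (applying Lemma~\ref{SubSolOmega}(ii)).} For each open ball $B\subseteq\R^N$ with rational centre and radius set $g^\pm_B(x,\omega)={\bf 1}_{\{{\mathcal E}^\pm(x,\omega)\cap B\neq\emptyset\}}$. By stationarity of $V$ and uniqueness for \eqref{eq:DS} (cf.\ \eqref{EqTau}), ${\mathcal R}_t(x,\omega)=x+{\mathcal R}_t(0,\tau_x\omega)$, so $\tfrac1t{\mathcal R}_t(x,\omega)$ and $\tfrac1t{\mathcal R}_t(0,\tau_x\omega)$ differ by $x/t\to0$, whence ${\mathcal E}^\pm(x,\omega)={\mathcal E}^\pm(0,\tau_x\omega)$ and therefore $g^\pm_B(x,\tau_z\omega)=g^\pm_B(x+z,\omega)$, i.e.\ $g^\pm_B$ is stationary (its measurability is routine, since ${\mathcal R}_t(0,\cdot)$ is measurable and the Kuratowski limits are obtained through countable operations). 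Each $g^\pm_B$ is bounded and, by Step 1, $t\mapsto g^\pm_B(X^{x,0,\alpha,\omega}_t,\omega)$ is nonincreasing a.s. Hence Lemma~\ref{SubSolOmega}(ii) applies and provides a set $\Omega^\pm_B\subseteq\Omega$ of full probability and a constant $c^\pm_B\in\{0,1\}$ with $g^\pm_B\equiv c^\pm_B$ on $\R^N\times\Omega^\pm_B$.

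\emph{Step 3 (conclusion).} Let $\Omega_0$ be the intersection of the $\Omega^\pm_B$ over all (countably many) such balls; it has full probability. Fix $\omega\in\Omega_0$ and $x\in\R^N$. Since ${\mathcal E}^\pm(x,\omega)$ is closed, a point $z$ lies in ${\mathcal E}^\pm(x,\omega)$ if and only if every rational ball $B\ni z$ meets it, i.e.\ $c^\pm_B=1$ for every such $B$; thus
\[
{\mathcal E}^\pm(x,\omega)={\mathcal E}^\pm:=\{z\in\R^N:\ c^\pm_B=1\ \text{for every rational ball}\ B\ni z\},
\]
which is deterministic and is compact, being contained in $B_M$ by \eqref{eq:reach1}. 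The points that need care are the two passages to the Kuratowski limits in Step 1 — in particular producing, for ${\mathcal E}^-$, a competitor $(\zeta_s)$ defined for \emph{all} large $s$ — and checking the (boundedness, stationarity, measurability, trajectory-monotonicity) hypotheses of Lemma~\ref{SubSolOmega}(ii); both are routine.
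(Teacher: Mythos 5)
Your proof is correct and follows essentially the same route as the paper's: both reduce the claim to Lemma~\ref{SubSolOmega}(ii) by exhibiting a bounded, stationary function of $(x,\omega)$ that is nonincreasing along trajectories of \eqref{eq:DS} (via the inclusion ${\mathcal E}^\pm(X^{x,0,\alpha,\omega}_{s'},\omega)\subseteq{\mathcal E}^\pm(X^{x,0,\alpha,\omega}_{s},\omega)$ for $s\le s'$), and then reconstruct the deterministic compact set ${\mathcal E}^\pm$ from the resulting countable family of constants. The only cosmetic difference is the choice of test objects: you use indicator functions of rational balls, $g^\pm_B(x,\omega)={\bf 1}_{\{{\mathcal E}^\pm(x,\omega)\cap B\neq\emptyset\}}$, whereas the paper uses $w_\varphi(x,\omega)=\max_{z\in{\mathcal E}^+(x,\omega)}\varphi(z)$ for a countable dense family of continuous $\varphi\ge0$ supported in $B_{M+1}$.
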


\noindent {\bf Proof: } Since the arguments are similar, here we prove the claim only for ${\mathcal E}^+$.
To this end, for any nonnegative $\varphi\in {\mathcal C}(\R^N)$ we define the measurable map $w_\varphi :\R^N\times \Omega\to\R$ by
$$
w_\varphi(x,\omega)= \max_{z\in {\mathcal E}^+(x,\omega)}\varphi(z)\;.
$$

Since, in view of the stationarity of $V$
$$
{\mathcal R}_t(x+y,\omega)={\mathcal R}_t(x,\tau_y\omega) +y\;,
$$
we have
$$
{\mathcal E}^+(x+y,\omega)= {\mathcal E}^+(x,\tau_y\omega)\;,
$$
and, hence, $w_\varphi$ is stationary. Moreover, noting that, for any $0\leq s\leq s'$,
$$
{\mathcal R}_t(X^{x,0,\alpha,\omega}_{s'},\omega) \subset {\mathcal R}_{s'-s+t}(X^{x,0,\alpha,\omega}_{s},\omega),
$$
we also get
$$
{\mathcal E}^+(X^{x,0,\alpha,\omega}_{s'},\omega)\subset {\mathcal E}^+(X^{x,0,\alpha,\omega}_{s},\omega)\;.
$$
Therefore the map $s\to w_\varphi(X^{x,0,\alpha,\omega}_{s}, \omega)$ is nonincreasing for any $x\in\R^N$ and $\alpha \in {\mathcal A}$. Lemma \ref{SubSolOmega} then yields the existence of a constant $c_\varphi$ and a set $\Omega_\varphi\subseteq \Omega$ of full probability such that, for all $(x,\omega)\in \R^N\times \Omega_\varphi$, $w_\varphi(x,\omega)= c_\varphi$.
Let $\varphi_n$ be dense in ${\mathcal C}(B_{M+1}, \R^+)$ and set $\Omega_0=\bigcap_n \Omega_{\varphi_n}$. Then $\P[\Omega_0]=1$ and, since $\varphi\to w_\varphi(x,\omega)$ is uniformly continuous with respect to $(x,\omega)$,  for any $\varphi$ there exists a constant $c_\varphi$ such that $w_\varphi(x,\omega)=c_\varphi$ for all $(x,\omega)\in \R^N\times \Omega_0$.

Finally, we define
$$
{\mathcal E}^+=\left\{ z\in\R^N\ : c_\varphi\geq \varphi(z) \ \  \text{ for all } \; \varphi\in {\mathcal C}(B_{M+1};\R_+)\right\},
$$
and claim that ${\mathcal E}^+= {\mathcal E}^+(x,\omega)$ for all $(x,\omega)\in \R^N\times \Omega_0$. Indeed fix $(x,\omega)\in \R^N\times \Omega_0$. If $z\notin {\mathcal E}^+$, there must exist $\varphi\in  {\mathcal C}(B_{M+1};\R_+)$ such that
$c_\varphi< \varphi(z)$. Since $ c_\varphi= w_\varphi(x,\omega)= \max_{z'\in {\mathcal E}^+(x,\omega)}\varphi(z')$, this implies that $z\notin {\mathcal E}^+(x,\omega)$. If $z\in {\mathcal E}^+$, let  $\varphi\in   {\mathcal C}(B_{M+1};[0,1])$ be such that $\{\varphi=1\}= \{z\}$. Then $1=\varphi(z)\leq c_\varphi= \max_{z'\in {\mathcal E}^+(x,\omega)}\varphi(z')\leq 1 $, and, hence, $z\in {\mathcal E}^+(x,\omega)$.
\QED

We remark that, although  the compact sets ${\mathcal E}^\pm$ are independent of $\omega$, they are defined for all $\omega$'s in some
$\Omega_0\subseteq\Omega$ with $\P[\Omega_0]=1$. In view of this, the reader should keep in mind that, even though there is no reference to $\omega$ in the statements of the next two Lemmata which apply to ${\mathcal E}^-$, the claims are valid for those $\omega$'s for which ${\mathcal E}^-$ is identified.

\begin{lem}\label{lem:convex}
The set ${\mathcal E}^-$ is convex.
\end{lem}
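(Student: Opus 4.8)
The plan is to check the two-point definition of convexity: given $z_1,z_2\in\mathcal E^-$ and $\lambda\in(0,1)$, show that $z:=\lambda z_1+(1-\lambda)z_2\in\mathcal E^-$, i.e.\ that $\operatorname{dist}\bigl(z,\tfrac1\tau\mathcal R_\tau(0,\omega)\bigr)\to0$ as $\tau\to+\infty$ for $\omega$ in a set of full probability. The mechanism is concatenation of controlled trajectories: writing $\tau\approx t_1+t_2$ with $t_1=\lambda\tau$, $t_2=(1-\lambda)\tau$, one first reaches a point $y$ with $|y-t_1z_1|\le\ep t_1$ (possible for $t_1$ large, since $z_1\in\mathcal E^-=\mathcal E^-(0,\omega)$ by Lemma~\ref{lem:E+E-indepom}), then -- via the elementary inclusion $\mathcal R_{t_1+t_2}(0,\omega)\supseteq\mathcal R_{t_2}(y,\omega)$ obtained by gluing controls, together with the stationarity identity $\mathcal R_{t_2}(y,\omega)=y+\mathcal R_{t_2}(0,\tau_y\omega)$ (cf.\ the proof of Lemma~\ref{lem:E+E-indepom}) -- reaches from $y$ a point $y+w$ with $|w-t_2z_2|\le\ep t_2$, landing within $\approx\ep\tau$ of $\tau z$. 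The obstruction is that the convergence $\tfrac1{t_2}\mathcal R_{t_2}(0,\tau_y\omega)\to\mathcal E^-$ (which holds for almost every base point) has a rate depending on $\tau_y\omega$, that is on the \emph{moving}, size $\sim\lambda\tau$, point $y$, so the second leg is not controlled uniformly as $\tau\to+\infty$.

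I would remove this non-uniformity with Egorov's theorem. Since $\mathcal E^-$ is compact and $\zeta\mapsto\operatorname{dist}\bigl(\zeta,\tfrac1s\mathcal R_s(0,\omega')\bigr)$ is $1$-Lipschitz, the inclusion $\mathcal E^-\subseteq\mathcal E^-(0,\omega')$ (valid for $\omega'$ in a full-measure set, by Lemma~\ref{lem:E+E-indepom}) upgrades to $\Phi_s(\omega'):=\sup_{\zeta\in\mathcal E^-}\operatorname{dist}\bigl(\zeta,\tfrac1s\mathcal R_s(0,\omega')\bigr)\to0$ a.s.\ as $s\to+\infty$, with $0\le\Phi_s\le2M$ by \eqref{eq:reach1}. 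Egorov then provides, for each small $\eta>0$, a set $E_\eta$ with $\P[E_\eta]\ge1-\eta$ and a threshold $s_\eta^*$ such that $\Phi_s(\omega')\le\ep$ whenever $s\ge s_\eta^*$ and $\omega'\in E_\eta$. The second tool is Lemma~\ref{ReachEom}: from any base point $y^0$ one can drive into $E_\eta(\omega)$, i.e.\ $\mathcal R_\sigma(y^0,\omega)\cap E_\eta(\omega)$ has positive measure, for $\sigma=K(\omega,\eta)+k\,\eta^{1/N}|y^0|$; the crucial point is that if $\eta$ is chosen small enough (depending on $\ep,\lambda,M,k,|z_1|$), then for $|y^0|\le(|z_1|+\ep)t_1$ this $\sigma$ is $\le\ep t_1$, negligible compared to $t_1$.

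Assembling: fix $\ep>0$, choose $\eta$ accordingly, and take $\omega$ in the intersection of the finitely many full-measure sets involved. For $\tau$ large, pick $y^0\in\mathcal R_{t_1}(0,\omega)$ with $|y^0-t_1z_1|\le\ep t_1$; perform a detour of duration $\sigma=K(\omega,\eta)+k\eta^{1/N}|y^0|\le\ep t_1$ to a point $y\in\mathcal R_\sigma(y^0,\omega)$ with $\tau_y\omega\in E_\eta$, so that $|y-t_1z_1|\le(1+M)\ep t_1$; then, since $\tau_y\omega\in E_\eta$ and $t_2\ge s_\eta^*$, we have $\Phi_{t_2}(\tau_y\omega)\le\ep$ and may pick $w\in\mathcal R_{t_2}(0,\tau_y\omega)$ with $|w-t_2z_2|\le\ep t_2$. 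Then $y+w\in\mathcal R_{t_1+\sigma+t_2}(0,\omega)$ lies within $C\ep\tau$ of $\tau z$, and the total time $t_1+\sigma+t_2$ differs from $\tau$ by $O(\ep\tau)$, so $\operatorname{dist}\bigl(z,\tfrac1{t_1+\sigma+t_2}\mathcal R_{t_1+\sigma+t_2}(0,\omega)\bigr)\le C\ep$. A routine continuity argument in the time variable (or an additional short maneuver of duration $\le\ep t_1$ from $y+w$, whose reachable set stays in $B(y+w,M\ep t_1)$) upgrades this to $\operatorname{dist}\bigl(z,\tfrac1s\mathcal R_s(0,\omega)\bigr)\le C\ep$ for \emph{all} sufficiently large $s$ -- this is what places $z$ in the Kuratowski lower limit $\mathcal E^-$ rather than only in $\mathcal E^+$. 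Letting $\ep\downarrow0$ along a sequence and intersecting the countably many full-measure sets gives $z\in\mathcal E^-(0,\omega)=\mathcal E^-$.

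The main difficulty is exactly the obstruction flagged above: in the random setting the rescaled reachable sets approach $\mathcal E^-$ at a rate that varies with the environment, so a bare concatenation of two near-optimal trajectories fails. What rescues the argument is the interplay of Egorov's theorem with the ``reachable sets fill up space'' property of Lemma~\ref{ReachEom}, which lets one steer, at a cost negligible on the relevant time scale, onto a base point from which the rest of the trajectory behaves uniformly well; the only remaining nuisance is the bookkeeping needed to hit every large total time rather than a subsequence.
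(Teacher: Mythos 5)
Your proof is correct and follows essentially the same route as the paper: the Egorov set $E_\eta$ is exactly the paper's $S_\ep$ (the paper constructs it directly from a.s. convergence rather than naming Egorov), Lemma~\ref{ReachEom} supplies the same short detour onto a base point in $E_\eta(\omega)$, and the conclusion comes from gluing the three controls. The only cosmetic differences are that you establish general $\lambda$-convex combinations rather than the midpoint case (which, with closedness of $\mathcal E^-$, is all the paper needs) and you handle the "all large $s$" requirement of the Kuratowski lower limit by a short terminal maneuver, whereas the paper pre-balances the time budget so the total duration equals $t$ exactly.
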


\noindent {\bf Proof: } Let $\Omega_0$ be the set of full probability where ${\mathcal E}^-$ is defined and $\delta_0>0$ and $k$ be the two fixed constants in Lemma~\ref{ReachEom}.  For  $\ep\in(0,\delta_0)$  choose $T_\ep$ so large that $\P[S_\ep]>1-\ep$ where
$$
S_\ep=\left\{\omega\in \Omega_0 : {\mathcal E}^-\subset \frac{1}{t}{\mathcal R}_t(0,\omega)+\ep B \ \  \text{ for all} \ \ t\geq T_\ep\right\},
$$
and set $S_\ep(\omega)=\{x\in \R^N\; : \; \tau_x\omega\in S_\ep\}$.

Since $\P[S_\ep]>1-\ep$, it follows from Lemma \ref{ReachEom} that there exists a subset $\Omega'_\ep=\Omega'(S_\ep)$ of $\Omega_0$ of full probability such that, for all $\omega\in\Omega_\ep'$,
there exists a constant $K=K(S_\ep,\omega,\ep)>0$ such that, for all $x\in \R^N$,
$$
{\mathcal R}_{K+k\ep^{1/N}|x|}(x,\omega) \cap S_\ep(\omega)\neq \emptyset\;.
$$

Next we fix $\omega\in S_\ep\cap\Omega'_\ep$ and $y_1, y_2\in {\mathcal E}^-$. We  show that, for some $C>0$, $y_3=(y_1+y_2)/2 \in {\mathcal E}^-+C\ep^{1/N} B$. This yields the convexity of ${\mathcal E}^-$, since
${\mathcal E}^-$  is compact and $\ep$ is arbitrary.

Let $\bar t= t/(2+K+Ck\ep^{1/N})$ for $t$ large enough so
that $\bar t\geq T_\ep$. We construct a control $\alpha:[0,t] \to B$ by gluing together three other ones chosen as follows. By the definition of $S_\ep$, there
exits
$\alpha_1 \in {\mathcal A}$ such that
$$
\left|\frac{1}{\bar t}X_{\bar t}^{0,0,\alpha_1,\omega}-y_1\right|\leq \ep\;.
$$
Note that, since $x_1= X_{\bar t}^{0,0,\alpha_1,\omega}$  satisfies $|x_1|\leq M\bar t$ ,
there exits $\alpha_2 \in {\mathcal A}$ and a time $t_2=K+Ck\ep^{1/N}\bar t$ such that
$x_2=X_{t_2}^{x_1,0, \alpha_2, \omega}\in S_\ep(\omega)$. Moreover, since
$
\omega_2=\tau_{x_2}\omega\in S_\ep,
$
there exits some $\alpha_3 \in {\mathcal A}$ such that
$$
\left|\frac{1}{\bar t}X_{\bar t}^{0,0,\alpha_3, \omega_2}-y_2\right|\leq \ep\;,
$$
and, in addition,  $X_{\bar t}^{0,0,\alpha_3, \omega_2}=X_{\bar t}^{x_2,0,\alpha_3, \omega}-x_2$.

We set
$$
\alpha(s)=\left\{\begin{array}{cl}
\alpha_1(s) & {\rm on } \ \ [0,\bar t),\\
\alpha_2(s-\bar t) & {\rm on } \ \  [\bar t, \bar t+t_2),\\
\alpha_3(s-(\bar t+t_2)) & {\rm on} \ \ [\bar t+t_2,+\infty).
\end{array}\right.
$$

In view of the definition of $\bar t$,  we have $t= 2\bar t+t_2$ and
$
X_{2\bar t+t_2}^{0,0,\alpha,\omega}=X_{\bar t}^{0,0,\alpha_3,\omega_2}+X_{t_2}^{x_1,0,\alpha_2,\omega},
$
while
$$
\left|X_{t_2}^{x_1,0,\alpha_2,\omega}-x_1\right|\leq C t_2 \;.
$$
Hence
$$
\begin{array}{l}
\ds{ \left|\frac{1}{t} X_{t}^{0,0,\alpha,\omega}-\frac{y_1+y_2}{2}\right|}\\
\qquad \leq
\ds{
\left|\frac{1}{2\bar t+t_2}-\frac{1}{2\bar t}\right||X_{2\bar t+t_2}^{0,0,\alpha,\omega}|
+
\left|\frac{1}{2\bar t} X_{\bar t}^{0,0,\alpha_3,\omega_2}-\frac{y_2}{2}\right|
+
\left|\frac{1}{2\bar t} X_{t_2}^{x_1,0,\alpha_2,\omega}-\frac{y_1}{2}\right| } \\
\qquad \leq \ds{ \frac{t_2}{2\bar t(2\bar t+t_2)} M(2\bar t+t_2)
+ \frac{\ep}{2}+ \frac{Mt_2}{2\bar t}+ \frac{\ep}{2}\; \leq \; C\ep^{1/N}
},
\end{array}
$$
with the last inequality holding for $\bar t$ (or $t$) large enough since $t_2\leq  C(K+k\ep^{1/N}\bar t)$. \QED

We continue with 

\begin{lem}\label{lem:B01subsetE-} The closed unit ball $B\subset\R^N$ is contained in ${\mathcal E}^-$.
\end{lem}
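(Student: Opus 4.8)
The plan is to reach, for every $z\in B$, a point asymptotic to $tz$ at time exactly $t$ by simply using the constant control $\alpha\equiv z$, and then to average out a random error using the convexity and compactness of $\mathcal E^-$. Fix $z\in B$, set $\alpha\equiv z\in{\mathcal A}$, and let $X_s:=X^{0,0,\alpha,\omega}_s$, which solves $X'_s=z+V(X_s,\omega)$, $X_0=0$. Integrating,
$$
\frac{1}{t}X_t=z+\frac{1}{t}\int_0^t V(X_s,\omega)\,ds .
$$
By stationarity of $V$ we have $V(X_s,\omega)=V(0,\tau_{X_s}\omega)$, and by the definition \eqref{def:Talpha} of the transformation $T^\alpha_s=T^\alpha_{0,s}$ associated with $\alpha\equiv z$, $\tau_{X_s}\omega=T^\alpha_s\omega$. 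Hence $V(X_s,\omega)=g(T^\alpha_s\omega)$ where $g(\omega'):=V(0,\omega')$ is bounded (by $\|V\|$) and, by \eqref{eq:mz}, satisfies $\E[g]=\E[V]=0$.

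Next I would invoke Proposition~\ref{prop:Talpha}, which tells us $(T^\alpha_s)_{s\in\R}$ is a measure-preserving group on $(\Omega,\mathcal F,\P)$; after noting the (routine) joint measurability of $(s,\omega)\mapsto T^\alpha_s\omega$, which follows from the measurability of $\tau$ and the continuous dependence on $s$ of solutions of \eqref{eq:DS}, the continuous-time Birkhoff ergodic theorem applies. It gives a full-probability set on which, as $t\to+\infty$ (through all reals),
$$
\frac{1}{t}\int_0^t g(T^\alpha_s\omega)\,ds\ \longrightarrow\ g^*(\omega):=\E[\,g\mid \mathcal I\,](\omega),
$$
where $\mathcal I$ is the $\sigma$-algebra of $(T^\alpha_s)$-invariant sets, and $\E[g^*]=\E[g]=0$. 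Intersecting this set with the set $\Omega_0$ of Lemma~\ref{lem:E+E-indepom}, we obtain, for a.e.\ $\omega$, $\frac1t X^{0,0,\alpha,\omega}_t\to z+g^*(\omega)$ as $t\to+\infty$. Since $X^{0,0,\alpha,\omega}_t\in{\mathcal R}_t(0,\omega)$, this means $z+g^*(\omega)\in{\mathcal E}^-(0,\omega)={\mathcal E}^-$ for a.e.\ $\omega$.

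The last step is a barycenter argument. The random variable $\omega\mapsto z+g^*(\omega)$ takes values, almost surely, in the fixed compact convex set ${\mathcal E}^-$ — convexity is Lemma~\ref{lem:convex} and compactness is part of Lemma~\ref{lem:E+E-indepom} — and its expectation equals $z+\E[g^*]=z$. Since the barycenter of a probability measure supported on a compact convex subset of $\R^N$ lies in that set, we conclude $z\in{\mathcal E}^-$. As $z\in B$ was arbitrary, $B\subseteq{\mathcal E}^-$ (and the closedness of ${\mathcal E}^-$ covers the boundary of $B$ even if one prefers to argue only with $|z|<1$).

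I expect the only real subtlety to be that the flow $(T^\alpha_s)$ need not be ergodic, so the time average $g^*$ is genuinely $\omega$-dependent rather than zero; this is precisely what the convexity and compactness of ${\mathcal E}^-$ absorb in the final step, and it is the reason the mean-zero hypothesis \eqref{eq:mz} enters essentially here. The joint measurability required by the continuous-time ergodic theorem is routine.
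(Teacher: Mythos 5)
Your proof is correct and follows essentially the same route as the paper: fix a constant control, identify $V(X_s,\omega)$ with $V(0,T^\alpha_s\omega)$ and apply the Birkhoff ergodic theorem for the measure-preserving flow $(T^\alpha_s)$ from Proposition~\ref{prop:Talpha} to conclude that $\tfrac1t X_t\to z+\E[V\,|\,\mathcal F^\alpha]$ a.s., then use the mean-zero hypothesis together with convexity and compactness of $\mathcal E^-$ to place $z$ in $\mathcal E^-$. The only cosmetic difference is that you spell out the final step as a barycenter argument, whereas the paper writes it as $\alpha=\E[\E[V+\alpha\,|\,\mathcal F^\alpha]]\in\mathcal E^-$; these are identical.
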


\noindent {\bf Proof: } Let $\alpha\in B$ be a constant control. Then the transformation $T_t^\alpha:\Omega\to\Omega$ defined by
(\ref{def:Talpha}) is autonomous and, according to Proposition \ref{prop:Talpha}, is measure preserving. Let  ${\mathcal F}^\alpha$ be the $\sigma-$algebra of the invariant sets for
$T_t^\alpha$.  Using the ergodic theorem we find that, as $t\to\infty$ and a.s. in $\omega$,
\be\label{ergo+}
\frac{1}{t} X^{0,0,\alpha,\omega}_t= \frac{1}{t}\int_0^t (V(X^{0,0,\alpha,\omega}_s,\omega)+\alpha)ds=
\frac{1}{t}\int_0^t (V(T_s^\alpha\omega)+\alpha)ds
\to \E\left[ V+\alpha\; |\; {\mathcal F}^\alpha\right]. 
\ee
We also note  for later use that, since the invariant sets for $V+\alpha$ and for $-(V+\alpha)$ are the same, a.s. in $\omega$,
\be\label{ergo-}
\lim_{t\to-\infty} \frac{1}{t} X^{0,0,\alpha,\omega}_t=
\lim_{t\to+\infty} -\frac{1}{t} X^{0,0,\alpha,\omega}_{-t}= - \E\left[ -(V+\alpha)\; |\; {\mathcal F}^\alpha\right]=
\lim_{t\to+\infty} \frac{1}{t} X^{0,0,\alpha,\omega}_t. 
\ee

It follows from  (\ref{ergo+}) that, a.s. in $\omega$,
$
\E\left[ V+\alpha\; |\; {\mathcal F}^\alpha\right] \in {\mathcal E}^- .
$
Since ${\mathcal E}^-$ is convex and compact and $V$ has mean zero, the last claim implies that
$
\alpha= \E\left[\E\left[ V+\alpha\; |\; {\mathcal F}^\alpha\right]\right] \in {\mathcal E}^-\;.
$\QED

The final step is to transform the set theoretic Kuratowski limits given by Lemma \ref{lem:B01subsetE-} into a limit in terms of measure.

We have:

\begin{lem}\label{lem:RtB01}
There exists $\Omega_0\subseteq\Omega$ of full probability such that, for every $\omega\in\Omega_0$  there exist a positive constant $k$ and,  for any $\delta\in (0,1/k^N)$,
a (measurable with respect to $\omega$) $T=T(\omega,\delta) >0$ such that, for $t\geq T(\omega,\delta)$, there exists $t_1\in ((1-k\delta^{1/N})t,t)$ such that
$$
 \left|B\backslash \left(\frac{1}{t} {\mathcal R}_{t_1}(0,\omega)\right)\right|\leq \delta\;.
$$
\end{lem}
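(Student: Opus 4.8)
The plan is to promote the purely \emph{set theoretic} inclusion $B\subset{\mathcal E}^-$ supplied by Lemmata~\ref{lem:E+E-indepom} and~\ref{lem:B01subsetE-} to a statement about Lebesgue measure, with the volume and perimeter bounds of Lemma~\ref{lem:VolR} providing the ``thickness'' of the reachable sets that a Kuratowski limit by itself does not see. Throughout, I would fix $\omega$ in the (full measure) intersection of the exceptional sets of Lemmata~\ref{lem:VolR}, \ref{lem:E+E-indepom} and~\ref{lem:B01subsetE-}, so that ${\mathcal E}^-(0,\omega)={\mathcal E}^-\supset B$.

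First I would upgrade the Kuratowski inclusion to a uniform neighborhood one: by the definition of ${\mathcal E}^-$, for each $z\in B$ one has $d(z,\tfrac1s{\mathcal R}_s(0,\omega))\to0$ as $s\to+\infty$, and since the functions $z\mapsto d(z,\tfrac1s{\mathcal R}_s(0,\omega))$ are $1$-Lipschitz (hence equicontinuous) and $B$ is compact, this convergence is uniform on $B$; thus for every $\eta>0$ there is a (measurably chosen) $T_1(\omega,\eta)>0$ with
\be\label{pp:unif}
B\subset \tfrac1s{\mathcal R}_s(0,\omega)+\eta B\qquad\text{for every }s\ge T_1(\omega,\eta).
\ee
Next, given $\delta$, I would pick a time $t_1$ close to $t$ at which the reachable set has controlled perimeter: the last assertion of Lemma~\ref{lem:VolR}, applied with $\gamma_1,\gamma_2$ at distance a dimensional multiple of $\delta$ below $1$, yields $K=K(\delta)$ — of size $O(1/\delta)$ — such that for every $t>0$ there is $t_1$ with $(1-k\delta^{1/N})t<t_1<t$, for the appropriate dimensional $k$, satisfying both ${\rm Per}({\mathcal R}_{t_1}(0,\omega))\le K t_1^{N-1}$ and $1-(t_1/t)^N\le\delta/(2|B|)$. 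Writing $A:=\tfrac1t{\mathcal R}_{t_1}(0,\omega)$, one gets $A\subset B(0,M)$, ${\rm Per}(A)=t^{-(N-1)}{\rm Per}({\mathcal R}_{t_1}(0,\omega))\le K(t_1/t)^{N-1}\le K$, and, once $t\ge 2T_1(\omega,\eta)$ so that $t_1\ge T_1(\omega,\eta)$, \eqref{pp:unif} at $s=t_1$ rescaled by $t_1/t$ gives $\tfrac{t_1}{t}B\subset A+\eta B$.

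Finally I would convert the neighborhood inclusion into a volume bound. Every point of $\tfrac{t_1}{t}B\setminus A$ lies within $\eta$ of $\partial A$, hence in the $\eta$-collar of $\partial A$; since ${\mathcal R}_{t_1}(0,\omega)$ has the interior sphere property (Lemma~\ref{lem:VolR} and \cite{cf06}) and perimeter at most $K t_1^{N-1}$, this collar has measure at most $C_N\eta\,{\rm Per}(A)\le C_N K\eta$ provided $\eta$ is small compared with the (rescaled) interior sphere radius. Therefore
\be
|B\setminus A|\ \le\ |B\setminus \tfrac{t_1}{t}B|+|\tfrac{t_1}{t}B\setminus A|\ \le\ (1-(t_1/t)^N)|B|+C_N K(\delta)\,\eta\ \le\ \tfrac{\delta}{2}+C_N K(\delta)\,\eta,
\ee
so that choosing $\eta=\eta(\delta)$ a suitable power of $\delta$ (say $\eta\sim\delta^2$) makes the last term $\le\delta/2$, which is the claim, with $T(\omega,\delta):=2\,T_1(\omega,\eta(\delta))$, enlarged if necessary so the collar estimate is legitimate for $t\ge T(\omega,\delta)$.

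The step I expect to be the real obstacle is this last one: passing from the topological statement \eqref{pp:unif} to a Lebesgue measure estimate is false for arbitrary sets of finite perimeter and genuinely uses the interior sphere regularity of the attainable sets. One must ensure that $\eta=\eta(\delta)$ can be kept small enough for the collar bound to remain valid, uniformly in $t$, at the rescaled times $t_1/t\approx1$; this is precisely why the perimeter estimate of Lemma~\ref{lem:VolR}, invoked at the cleverly chosen time $t_1$ near $t$ rather than at $t$ itself, is indispensable, whereas Steps~1 and~2 are essentially soft consequences of $B\subset{\mathcal E}^-$ and a scaling.
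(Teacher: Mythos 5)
Your Steps 1--3 are sound: the upgrade of the Kuratowski lower limit to the uniform neighborhood inclusion \eqref{pp:unif} (via equicontinuity of $z\mapsto d(z,\tfrac1s{\mathcal R}_s(0,\omega))$ on the compact $B$) is correct, and the choice of $t_1$ via Lemma~\ref{lem:VolR} is exactly the paper's as well. The genuine gap is in Step~4, and it is fatal as written. You correctly flag that a collar estimate $|\{d(\cdot,\partial A)<\eta\}|\leq C\eta\,{\rm Per}(A)$ is false for arbitrary finite-perimeter sets and that one must appeal to the interior sphere property. But the interior sphere radius $r(t_1)$ of the \emph{unrescaled} set ${\mathcal R}_{t_1}(0,\omega)$ supplied by \cite{cf06} is only uniform on compact time intervals $[\tau,T]$ and in fact degenerates (typically exponentially, through the Gronwall factor $e^{\|DV\|t_1}$) as $t_1\to\infty$. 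After rescaling by $1/t$, the interior sphere radius of $A=\tfrac1t{\mathcal R}_{t_1}(0,\omega)$ is $r(t_1)/t$, which tends to $0$ as $t\to\infty$. Since your $\eta=\eta(\delta)$ must be a fixed positive quantity once $\delta$ is fixed, the requirement ``$\eta$ small compared with the rescaled interior sphere radius'' fails for all large $t$, which is precisely the regime the lemma is about. Taking $t_1$ close to $t$ helps only with the perimeter bound; it does nothing to prevent the interior sphere radius from collapsing. So the bound $|\tfrac{t_1}{t}B\setminus A|\leq C_N K\eta$ has no justification uniformly in $t$, and the proof does not close.

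The paper circumvents this by never producing a quantitative collar estimate. Instead, it uses the perimeter bound from Lemma~\ref{lem:VolR} to invoke BV-compactness: along a subsequence, $\tfrac{1}{s_{n'}}{\mathcal R}_{s_{n'}}(0,\omega)$ converges in $L^1$ to some set $E$, and then it shows $|B\setminus E|=0$ by a positive-density argument. The density argument uses $B\subset{\mathcal E}^-$ to place a reachable point $z_n$ with $z_n/((1-\sigma)s_n)\to z$, and then the \emph{volume} lower bound $|{\mathcal R}_{\sigma s_n}(z_n,\omega)|\geq\beta(\sigma s_n)^N$ from Lemma~\ref{lem:VolR} (not the interior sphere radius) to guarantee $\liminf_r|E\cap B(z,r)|/|B_r|>0$. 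This is precisely the ``thickness'' input you were looking for: a scale-invariant volume lower bound rather than a collar estimate, so nothing degenerates as $t\to\infty$. If you want to save your scheme, replace the collar step by the density argument: rather than trying to bound $|\tfrac{t_1}{t}B\setminus A|$ directly, pass to an $L^1$ limit along a subsequence (legitimate thanks to your perimeter control) and show the limit has full density at every interior point of $B$.
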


\noindent {\bf Proof : }
Fix  
$\omega\in \Omega_0$, the set of full measure of $\omega$'s such that $B \subset {\mathcal E}^-$, 
let $t_n\to+\infty$ be any sequence and $\gamma\in (0,1)$ to be chosen later. Since, in view of Lemma \ref{lem:VolR}, there exist some constant $K>0$ and a sequence $s_n\in [\gamma t_n, t_n]$ such that
$$
{\rm Per}\left({\mathcal R}_{s_n}(0,\omega)\right) \leq K s_n^{N-1}\;,
$$
there exists a subsequence $(s_{n'})_{n' \in \N}$ such that the uniformly bounded sets ${\mathcal R}_{s_{n'}}(0,\omega)/s_{n'}$
converge in $L^1$, as $n' \to \infty$,  to some set $E$. In addition we may also assume that the bounded sequence $(s_{n'}/t_{n'})_{n'\in \N}$ converges to some $\eta\in [\gamma,1]$. We claim that
\be\label{claim:BmoinsE=0}
\left| B \backslash E\right|=0\;.
\ee
Assuming \eqref{claim:BmoinsE=0},  we complete the proof of the Lemma.  Since, as $n'\to\infty$,
\be\label{RtnRsn}
 \frac{1}{t_{n'}}{{\mathcal R}_{s_{n'}}(0,\omega)} \to \eta E \qquad {\rm in }\; L^1\;,
\ee
we find that
$$
\begin{array}{rl}
\lim_{n'\to\infty} \left|B \backslash \left(\frac{1}{t_{n'}}{{\mathcal R}_{s_{n'}}(0,\omega)}\right)\right|
=  & \left|B \backslash (\eta E)\right|  \\
\leq & \left|B \backslash B(0,\eta)\right| \; \leq \; \left|B \backslash B(0,\gamma)\right|=c_N(1-\gamma)^N\;.
\end{array}
$$

Accordingly we have proved that, for any sequence $t_n\to+\infty$, there exits a subsequence $(t_{n'})_{n'\in\N}$ and some $s_{n'}\in [\gamma t_{n'}, t_{n'}]$ such that, for sufficiently large $n'$,
$$
\left|B \backslash \left(\frac{1}{t_{n'}}{{\mathcal R}_{s_{n'}}(0,\omega)}\right)\right|\leq 2c_N(1-\gamma)^N,
$$
which gives the result with $k=(1/2c_N)^{1/N}$ and $\gamma =1-k\delta^{1/N}$. \\

We now show (\ref{claim:BmoinsE=0}). To simplify the notation, for the rest of the proof we write $s_n$ instead of $s_{n'}$. To this end, observe that it suffices to check that any $z\in  {\rm Int}(B)$ has a positive density in $E$, i.e., that, for any $z\in  {\rm Int}(B)$,
\be\label{PositiveDensity}
\liminf_{r\to 0^+} \frac{\left| E\cap B(z,r) \right| }{|B_r|}>0\;.
\ee
Fix $z\in {\rm Int}(B)$ and  $r>0$, choose $\sigma= r/(1+2M)$, and
note, for later use, that $B((1-\sigma)z, 2M\sigma)\subset B(z,r)$.

Lemma \ref{lem:B01subsetE-} yields $z_n\in {\mathcal R}_{(1-\sigma) s_n}(0,\omega)$ such that, as $n\to\infty$, $z_n/(1-\sigma)  s_n \to z$. Then ${\mathcal R}_{\sigma s_n}(z_n, \omega)\subset {\mathcal R}_{s_n}(0,\omega)$ while, for $n$ large enough,
$$
\frac{1}{s_n} {\mathcal R}_{\sigma s_n}(z_n, \omega)\subset \frac{1}{s_n} B(z_n,M\sigma s_n) \subset B((1-\sigma)z, 2M\sigma)\subset B(z,r)\;.
$$

Then (\ref{PositiveDensity}) follows since, for the $\beta$ defined in Lemma \ref{lem:VolR}, we have
$$
\begin{array}{rl}
\ds \left| E\cap B(z,r) \right| \; =  & \ds \lim_{n\to +\infty} \left| \frac{1}{s_n}{\mathcal R}_{s_n}(0,\omega) \cap B(z,r) \right|  \;
\geq \;  \liminf_{n\to +\infty} \left| \frac{1}{s_n}{\mathcal R}_{\sigma s_n}(z_n,\omega) \cap B(z,r) \right| \\
\geq & \ds \liminf_{n\to +\infty} \left| \frac{1}{s_n}{\mathcal R}_{\sigma s_n}(z_n,\omega) \right| \;
\geq \;  \beta \sigma^N \; = \;  \frac{\beta}{(1+2M)^N} r^N.
\end{array}
$$
\QED

Having established all the necessary ingredients,
we now proceed with the  \\

\noindent {\bf Proof of Theorem \ref{reach}: } Fix $\omega\in\Omega_0$, where $\Omega_0$ is the (finite) intersection of the sets of full measure where $V$ is defined and for which Lemmata \ref{lem:E+E-indepom} and \ref{lem:RtB01} hold, $\delta>0$ sufficiently small to be chosen later and let  $T_1=T_1(\omega,\delta)>0$ be the measurable map defined in Lemma \ref{lem:RtB01} so that, for any $t\geq T_1$, there is exists  $t_1\in [(1-k\delta^{1/N})t,t]$  such that
\be\label{B01Back}
\left|B \backslash \left(\frac{1}{t} {\mathcal R}_{t_1}(0,\omega)\right)\right|< \delta \;.
\ee

We first prove that, for every $\ep>0$,  there exists a (measurable in $\omega$) $T_2(\omega,\ep)>0$ such that, for all $ y\in \R^N $,
\be\label{Proofforx=0}
\theta(0,y,\omega) \leq T_2(\omega,\ep)+(1+\ep)|y| \;.
\ee

Fix $y\in \R^N$ and $\ep>0$. In view of Lemma \ref{lem:VolR}, for any $s\geq 0$  we have
$$
\left|{\mathcal R}_{-s}(y,\omega)\right|\geq \beta s^N \qquad {\rm and }\qquad {\mathcal R}_{-s}(y,\omega)\subset B(0,|y|+Ms)\;.
$$

Choose  next
$$s=\max\{(\frac{|y| \delta^{1/N}}{\beta^{1/N}-\delta^{1/N}M}), T_1(\omega,\delta)\left(\frac{\delta}{\beta}\right)^{1/N}\} \quad \text{ and } \quad
t=(\beta/\delta)^{1/N}s. $$

Since  $t\geq |y|+Ms$, 
we have
\be\label{B01Back2}
\frac{1}{t} {\mathcal R}_{-s}(y,\omega)\subset B
\ \  {\rm and }
\qquad
\left|\frac{1}{t} {\mathcal R}_{-s}(y,\omega)\right|\geq \frac{\beta s^N}{t^N} =\delta \;.
\ee

Observing that $t\geq T_1$ (a consequence of $s\geq T_1(\omega,\delta)(\delta/\beta)^{1/N}$), we conclude that there exists  $t_1\in [(1-k\delta^{1/N})t,t]$ such that (\ref{B01Back}) holds, a fact which, in view of (\ref{B01Back2}), implies that
$$
\frac{1}{t} {\mathcal R}_{t_1}(0,\omega) \cap \frac{1}{t} {\mathcal R}_{-s}(y,\omega) \neq\emptyset\;.
$$

Therefore $y\in {\mathcal R}_{s+t_1}(0,\omega)$ and, hence, since $t_1\leq t$,
$$
\theta(0,y,\omega)\leq s+t_1\leq  |y|\left(\frac{1+(\delta/\beta)^{1/N}}{1-(\delta/\beta)^{1/N}M}\right)+ (1+(\delta/\beta)^{1/N})T_1(\omega,\delta)\;.
$$
This gives (\ref{Proofforx=0}) for $\delta$ sufficiently small so that $1+(\delta/\beta)^{1/N}/(1-(\delta/\beta)^{1/N}M) \leq 1+\ep$ and
$T_2(\omega,\ep)=  (1+(\delta/\beta)^{1/N})T_1(\omega,\delta)$.\\

Next we prove the full reachability. Fix $\ep>0$, let  $k$ and $\delta_0$ be defined in Lemma \ref{ReachEom}, $\delta\in (0,\delta_0)$ be so small that
$k\delta^{1/N}(1+(1+\ep)M)<\ep$  and
$L$ sufficiently large so that the set $E_\ep=\{\omega\in \Omega_0 :
T_2(\omega,\ep)< L\}$ has probability larger then  $1-\delta$ with  $T_2(\omega,\ep)$ as above.
Let $\Omega_{0,\ep}$ denote the set of full probability obtained as the intersection of $\Omega_0$ with the set $\Omega'$ (of full probability) associated to $E_\ep$ by Lemma \ref{ReachEom}. Let $K=K(\omega,\ep)>0$ be defined by Lemma \ref{ReachEom} for each $\omega\in \Omega_{0,\ep}$.
Then, for all $x\in \R^N$ and for $t=t(\omega,\delta)={K(\omega)+k\delta^{1/N}|x|}$,
$
 {\mathcal R}_t(x,\omega) \cap E(\omega,\ep)\neq \emptyset 
$
where, as usual,  $E(\omega,\ep)=\{z\in \R^N :\tau_z\omega\in E_\ep \}$.  Choose $x_1\in  {\mathcal R}_t(x,\omega) \cap E(\omega,\ep)$.
Since $\tau_{x_1}\omega \in E_\ep$, we know from the first step that, for any $y\in\R^N$ there exists some
$t_1 \leq L+(1+\ep)|y-x_1|$ such that $y-x_1\in {\mathcal R}_{t_1}(0,\tau_{x_1}\omega)$ while, in view of
$
{\mathcal R}_{t_1}(x_1, \omega)=
{\mathcal R}_{t_1}(0, \tau_{x_1}\omega)+x_1\;,
$
$y\in {\mathcal R}_{t_1}(x_1, \omega)\subset  {\mathcal R}_{t+t_1}(x, \omega) $.
It follows  that, for some $T(\omega,\ep)>0$,
$$
\begin{array}{rl}
\theta(x,y,\omega)\leq t+t_1 \; \leq & t+L+(1+\ep)(|y-x|+ |x-x_1|)\\
\leq & L+K(1+(1+\ep) M)+ k\delta^{1/N}(1+(1+\ep) M)|x|+ (1+\ep)|y-x|\\
\leq & T(\omega,\ep)+ \ep|x|+(1+\ep)|y-x|.
\end{array}
$$

We conclude noting that we can remove the $\ep-$dependence of the set $\Omega_{0,\ep}$ by replacing it by  $\bigcap_n \Omega_{0,\ep_n}$ with  $\ep_n\to 0$.
\QED

\section{Homogenization}\label{sec:genN}

Here we show that the G-equation homogenizes. The proof relies on the next theorem which yields  the existence of the ``time constant''.

We have:

\begin{thm}[Existence of the ``time constant'']\label{thm:TimeConstant}
Assume (\ref{eq:ass1}, \ref{eq:ass2}) and (\ref{eq:mz}).  There exist a set of full probability $\Omega_0\subseteq \Omega$ and, for any $v\in \R^N$,  a ``time  constant'' \ $\bar q(v)\geq 0$ such that, for all $v\in\R^N$ and all $\omega\in \Omega_0$,
$$
\lim_{t\to +\infty} \frac{1}{t} \theta( 0, tv, \omega)= \bar q(v). 
$$
The map $v\to \bar q(v)$ is positively homogeneous, convex and such that, for $v\in\R^N$,
$$
M^{-1} |v| \leq \bar q(v) \leq |v|,
$$
and
\be \label{E+E-}
{\mathcal E}^+={\mathcal E}^-=\{v\in \R^N\;:\; \bar q(v)\leq 1\} \;.
\ee
Finally, for all $R>0$ and $\omega\in \Omega_0$,
\be\label{PointFinal}
\lim_{r\to+\infty} \sup_{|v|\leq R, \ |x|\leq R} \left| \frac{\theta(rx, r(x+v),\omega)}{r}-\bar q(v)\right|=0. 
\ee
\end{thm}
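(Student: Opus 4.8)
The plan is to build the time constant by applying the subadditive ergodic theorem not to $\theta$ itself (which, by Proposition~\ref{pr:ex}, fails the integrability hypothesis), but to a well-chosen auxiliary quantity that tracks $\theta$ asymptotically. First I would fix a direction $v\in\R^N$, $|v|\le 1$ (the general case follows by positive homogeneity, which itself comes from the obvious relation $\theta(0,\lambda tv,\omega)$ versus rescaling in the control system). For $N=2$ one takes $\bar a=v$ and studies $\theta(X^{\bar a}_s,X^{\bar a}_t,\omega)$ along the flow $X^{\bar a}_t$ of \eqref{eq:DS} with constant control $\alpha\equiv\bar a$; this is subadditive in $(s,t)$, bounded above by $t-s$, and stationary in the sense that $\theta(X^{\bar a}_s,X^{\bar a}_t,\tau\,\cdot)$ transforms correctly under the measure-preserving group $T^{\bar a}_{s,t}$ of Proposition~\ref{prop:Talpha}. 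In dimension $N\ge 3$ the limit of $X^{\bar a}_t/t$ need not be parallel to $\bar a$, so one instead introduces an auxiliary probability space carrying a control $\alpha$ that oscillates (independently of $\omega$) around $\bar a$, chosen so that, by the Kakutani ergodic theorem applied in the product space, $X^\alpha_t/t\to\bar a$ almost surely; then $\theta(X^\alpha_s,X^\alpha_t,\omega)$ is subadditive, bounded by $t-s$, and stationary in the product space.

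The subadditive ergodic theorem then gives an a.s. limit $\lim_{t\to\infty}\frac1t\theta(0,X^\alpha_t,\omega)=:L(v,\omega)$, a priori random. The second key step is to transfer this to $\theta(0,tv,\omega)$: since $X^\alpha_t\approx tv$ up to $o(t)$, the controllability estimate \eqref{thetabound} of Theorem~\ref{reach} controls $\theta(X^\alpha_t, tv,\omega)$ and $\theta(tv, X^\alpha_t,\omega)$ by $o(t)$ terms (here one uses $\theta(x,y,\omega)\le T(\omega,\ep)+\ep|x|+(1+\ep)|y-x|$ with $|y-x|=o(t)$ and $|x|=O(t)$, so the contribution is $\le \ep\,O(t)+o(t)$, and $\ep$ is arbitrary). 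Hence $\frac1t\theta(0,tv,\omega)\to L(v,\omega)$ as well. The third step removes the $\omega$-dependence: one shows $L(v,\cdot)$ is invariant under $\tau_x$ for all $x$ — or, more robustly, that the largest and smallest subsequential limits of $\theta(0,tv,\cdot)/t$ are translation-invariant random variables, hence a.s. constant by \eqref{eq:ass1} — and since both equal $L(v,\omega)$ a.s., $L(v,\cdot)\equiv\bar q(v)$ is deterministic. Convexity and positive homogeneity of $\bar q$ follow from subadditivity of $\theta$ ($\theta(0,t(v_1+v_2),\omega)\le\theta(0,tv_1,\omega)+\theta(tv_1,t(v_1+v_2),\omega)$ combined with approximate stationarity of the shifted term) plus the limit procedure; the bounds $M^{-1}|v|\le\bar q(v)\le|v|$ come from $|x'(s)|\le M$ (lower bound) and from \eqref{eq:reach1}/the unit-ball-in-${\mathcal E}^-$ estimate of Lemma~\ref{lem:B01subsetE-} (upper bound, since a point $tv$ with $|v|\le1$ lies in ${\mathcal R}_{(1+o(1))t}(0,\omega)$).

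The identity \eqref{E+E-} is then essentially a dictionary between the sublevel sets of $\bar q$ and the Kuratowski limits ${\mathcal E}^\pm$ of ${\mathcal R}_t(0,\omega)/t$ from Section~\ref{sec:controllability}: $v\in{\mathcal E}^-$ iff there are $z_t\in\frac1t{\mathcal R}_t(0,\omega)$ with $z_t\to v$, which by definition of $\theta$ and the convergence $\frac1t\theta(0,tv,\omega)\to\bar q(v)$ is equivalent to $\bar q(v)\le 1$; the reverse inclusion for ${\mathcal E}^+$ uses the perimeter/volume bounds of Lemma~\ref{lem:VolR} and Lemma~\ref{lem:RtB01} to upgrade the limsup set to the liminf set, so ${\mathcal E}^+={\mathcal E}^-$. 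Finally \eqref{PointFinal}, the uniform local convergence, follows from the pointwise convergence plus equicontinuity: $v\mapsto\bar q(v)$ is Lipschitz (being convex and finite, sandwiched between $M^{-1}|v|$ and $|v|$), and $\theta(rx,r(x+v),\omega)=\theta(0,r((x+v)-x)+\text{shift},\tau_{rx}\omega)$; the controllability estimate \eqref{thetabound} gives a uniform (in the relevant compact range, up to an $\ep$-error) modulus of continuity for $\theta(rx,\cdot,\omega)/r$ in the endpoint, and one covers the compact set $\{|x|\le R,|v|\le R\}$ by finitely many balls, invoking pointwise convergence at the centers together with the translation-invariance already established.

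\textbf{Main obstacle.} The delicate point is the construction of the oscillating control $\alpha$ and the verification that $\theta(X^\alpha_s,X^\alpha_t,\omega)$ is genuinely subadditive \emph{and} stationary in the enlarged product space while still forcing $X^\alpha_t/t\to\bar a$ — this requires the Kakutani (rather than Birkhoff) ergodic theorem and a careful bookkeeping of which randomness lives where, so that the subadditive ergodic theorem applies and the resulting limit can be pulled back to $\theta(0,tv,\omega)$ via \eqref{thetabound}. Everything else is either a direct application of the Section~\ref{sec:controllability} lemmata or a soft convexity/equicontinuity argument.
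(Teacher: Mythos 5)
Your plan reproduces the paper's strategy faithfully: build the oscillating control $\alpha$ in a product probability space, show $\theta(X^\alpha_s,X^\alpha_t,\omega)$ is subadditive, bounded by $t-s$, and stationary under a product shift, invoke Kakutani's random ergodic theorem to make the underlying transformation ergodic, apply Kingman to get an a priori random limit, transfer to $\theta(0,tv,\omega)$ via the reachability estimate \eqref{thetabound}, and remove $\omega$-dependence by the translation-invariance of $\limsup$/$\liminf$ (Lemma~\ref{limsupliminf}); the identity \eqref{E+E-}, the bounds, and the uniform convergence \eqref{PointFinal} are then handled exactly as you describe.

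One imprecision worth flagging before you try to write it out: with the control that oscillates with amplitude and step-size of order $\delta$ around $\bar a$, the Kakutani argument does \emph{not} give $X^\alpha_t/t\to\bar a$; the discretization terms (e.g.\ $\frac{1}{\sigma_n}\sum_i({\bf t}_i-\delta)V(X^\alpha_{\sigma_i})$ and the quadratic error in replacing the ODE by its Euler increments) survive, and what Lemma~\ref{lem:limXovert} actually yields is $\limsup_t|X^\alpha_t/t-\bar a|\le C\delta$. Consequently $|X^\alpha_t-t\bar a|$ is $O(\delta t)$ rather than $o(t)$, so the reachability estimate only gives $\limsup_t t^{-1}\theta(0,t\bar a)-\liminf_t t^{-1}\theta(0,t\bar a)\le 2\ep(\|V\|+1)+2(1+\ep)C\delta$ for each fixed $\delta$ (the random limit $\Gamma/\delta$ cancels between the two bounds). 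You then still need the extra step of letting $\delta\to 0$ (and $\ep\to 0$), together with Lemma~\ref{limsupliminf} to identify the common value as deterministic. This is a small fix, but without it the transfer step as you wrote it (``$X^\alpha_t\approx tv$ up to $o(t)$'') does not go through.
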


As a consequence, we have:

\noindent {\bf Proof of Theorem \ref{theo:hom1}: } If $V$ has mean zero, then, following \cite{S91} (see also \cite{NolenNovikov}), Theorem \ref{thm:TimeConstant} readily implies the homogenization property of the G-equation with effective  Hamiltonian $\bar H$ given by
$$
\overline H(p)= \sup_{ \bar q(v)\leq 1} \ \lg p, -v\rg \;.
$$
It is, of course, clear that $\overline H$ is positively homogeneous of degree one,
Lipschitz continuous and convex. Finally, since $\bar q(v)\leq |v|$ for any $v$, the vector $z=-p/|p|$ satisfies $\bar q(z)\leq 1$, so that $\bar H(p)\geq -\lg p, z\rg= |p|$.\\

We now assume that $\E[V]\neq 0$. For any $P\in \R^N$, let $z^P$ be the unique solution to
\be\label{eq:zP}
\left\{\begin{array}{l}
z_t= |Dz+P|+ \lg V, Dz+P\rg \ \text{ in } \ \R^N\times \R_+,\\[2mm]
z=0 \ \text{ on } \ \R^N\times\{0\}.
\end{array}\right.
\ee

It is well known (see \cite {LS, AS}) that the homogenization of (\ref{eq:uep}) is equivalent to the, a.s. in $\omega$ and uniform in balls of radius $Rt$ convergence, as $t\to+\infty$,  of $t^{-1} z^P(\cdot,t)$ to some constant $\overline H(P)$ and for all $P\in \R^N$ and $R>0$. 

Let now $\tilde V=V-\E[V]$. Since $\E[\tilde V]=0$, it follows from the first part of the ongoing proof that there exists a  homogenized Hamiltonian $\tilde H$ such that,  for all $P\in \R^N$, $\tilde H(P)\geq |P|$. 
It is also immediate that
$$
\tilde z^P(x,t)= z^P\left(x-\E[V] t, t\right)\ -\ \lg \E[V],P\rg \ t\;
$$
solves
$$
\left\{\begin{array}{l}
z_t= |Dz+P|+ \lg \tilde V, Dz+P\rg \ \text{ in } \ \R^N\times \R_+,\\[2mm]
z=0 \ \text{ on } \ \R^N\times\{0\}.
\end{array}\right.
$$

Since we have homogenization for $\tilde V$, for any $R>0$ and a.s. in $\omega$, we find
$$
\begin{array}{l}
\ds \lim_{t\to+\infty}\sup_{|x|\leq Rt}  \left|t^{-1}z^P(x,t)-(\tilde H(P)+\lg \E(V),P\rg)\right| \\
\qquad \qquad   =
  \ds  \lim_{t\to+\infty}\sup_{|x|\leq Rt}  \left|t^{-1}\tilde z^P(x+\E[V]t,t)-\tilde H(P)\right| \\
\qquad \qquad   \leq  \ds  \lim_{t\to+\infty}\sup_{|x|\leq (R+|\E[V]|)t}  \left|t^{-1}\tilde z^P(x,t)-\tilde H(P)\right|\; =\; 0
\end{array}
$$

It follows that the homogenization holds for $V$, with the homogenized Hamiltonian given by $\bar H(P)=\tilde H(P)+\lg \E(V),P\rg$. Moreover, for all $P\in \R^N$,
we have $\bar H(P)\geq |P|+\lg \E(V),P\rg$. 
\QED

The rest of the section is devoted to the proof of Theorem \ref{thm:TimeConstant}. For this we assume, that  (\ref{eq:ass1}, \ref{eq:ass2}) and (\ref{eq:mz}) hold. As already discussed,
the main difficulty stems from the lack of an integral bound for $\theta$, a fact which prevents us to use directly the sub-additive ergodic theorem. Our idea to overcome this difficulty is to construct, for each direction $\bar a\in B$, a random control $\alpha$ such that (i) the associated solution $X_t^{0,0,\alpha,\omega}$ of \eqref{eq:DS} has an a.s. long time average $\lim_{t\to\infty} X_t^\alpha/t$ a.s. close to $\bar a$, (ii) it is possible to apply the sub-additive ergodic theorem to the minimal time $\theta(X_s^\alpha, X_t^\alpha,\omega)$ to obtain a (random) a.s. limit, and (iii) the averaged long time limits of $\theta(0, X_t^\alpha,\omega)$ and $\theta(0, t\bar a,\omega)$ are a.s. close by the reachability estimate.  Since the ergodicity and stationarity yield that the largest and smallest possible limits, as $t\to\infty$, of $\theta(0, t\bar a,\omega)/t$ are actually almost surely independent of $\omega$ we may conclude.  The construction of the random controls is rather involved. When $N=2$, however, the special geometry allows to simply use constant controls, i.e., to take $\alpha=\bar a$. We present in the Appendix this simpler proof.


We begin with the last of the steps discussed above.

\begin{lem}\label{limsupliminf} There exists  $\Omega_0\subseteq\Omega$ of full probability such that, for all  $v\in \R^N$ and $\omega\in\Omega_0$, the limits
$
\limsup_{t\to+\infty} t^{-1}\theta(0,tv,\omega)$  and $\liminf_{t\to+\infty} t^{-1}\theta(0,tv,\omega)
$
are independent of $\omega$.
\end{lem}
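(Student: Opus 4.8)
The plan is to show that, for each fixed $v$, both
$$
\bar q^{+}(v,\omega):=\limsup_{t\to+\infty}t^{-1}\theta(0,tv,\omega)
\qquad\text{and}\qquad
\bar q^{-}(v,\omega):=\liminf_{t\to+\infty}t^{-1}\theta(0,tv,\omega)
$$
are a.s. equal to a deterministic constant, and then to absorb the $v$-dependence of the exceptional set using Lipschitz continuity in $v$. First I would observe that, by Theorem~\ref{reach} applied with $x=0$, a.s. in $\omega$ one has $0\le\bar q^{-}(v,\omega)\le\bar q^{+}(v,\omega)\le|v|$, so these quantities are finite, and that they are $\mathcal F$-measurable in $\omega$: indeed the $\limsup$ and $\liminf$ over $t\in\R$ coincide with those over $t\in\N$ (sandwich $t^{-1}\theta(0,tv,\omega)$ between $n^{-1}\theta(0,nv,\omega)$, $n=\lfloor t\rfloor$, using \eqref{thetabound} and sub-additivity of $\theta$), and $\theta(0,nv,\cdot)$ is measurable.

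The core step is a monotonicity property along the flow \eqref{eq:DS}. Fix $v$, $x\in\R^N$, $\alpha\in{\mathcal A}$, $\omega\in\Omega_0$ (the full-measure set where Theorem~\ref{reach} holds), write $Y_\tau=X^{x,0,\alpha,\omega}_\tau$, and take $0\le t\le t'$. Since $Y_{t'}$ is reachable from $Y_t$ in time $t'-t$ (concatenation of controls), sub-additivity of $\theta$ applied to the route from $Y_t$ to $Y_{t'}$, then to $sv+Y_{t'}$, then to $sv+Y_t$, together with \eqref{thetabound} on the last leg, gives
$$
\theta(Y_t,sv+Y_t,\omega)\ \le\ (t'-t)+\theta(Y_{t'},sv+Y_{t'},\omega)+T(\omega,\ep)+\ep|sv+Y_{t'}|+(1+\ep)|Y_t-Y_{t'}|.
$$
Using the stationarity identity ${\mathcal R}_s(z,\omega)={\mathcal R}_s(0,\tau_z\omega)+z$, which yields $\theta(Y_\tau,sv+Y_\tau,\omega)=\theta(0,sv,\tau_{Y_\tau}\omega)$, I divide by $s$, let $s\to+\infty$ and then $\ep\to0$ (keeping $\liminf(a_s+b_s)\le\liminf a_s+\limsup b_s$ in mind, with the $\limsup_s$ of the error term equal to $\ep|v|$) to obtain $\bar q^{\pm}(v,\tau_{Y_t}\omega)\le\bar q^{\pm}(v,\tau_{Y_{t'}}\omega)$. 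Thus $t\mapsto\bar q^{\pm}(v,\tau_{X^{x,0,\alpha,\omega}_t}\omega)$ is nondecreasing for every $x$ and every $\alpha$, so the bounded stationary functions $w^{\pm}(x,\omega):=-\bar q^{\pm}(v,\tau_x\omega)$ are nonincreasing along all trajectories of \eqref{eq:DS}. Lemma~\ref{SubSolOmega}(ii) then forces $w^{\pm}$ to be constant: there are a full-measure set $\Omega_v$ and constants $\bar q^{\pm}(v)$ with $\bar q^{\pm}(v,\omega)=\bar q^{\pm}(v)$ for all $\omega\in\Omega_v$.

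Finally I would uniformize in $v$. For $\omega\in\Omega_0$ and $v,v'\in\R^N$, the inequality $\theta(0,tv,\omega)\le\theta(0,tv',\omega)+\theta(tv',tv,\omega)$ combined with \eqref{thetabound} and $\ep\to0$ gives $|\bar q^{\pm}(v,\omega)-\bar q^{\pm}(v',\omega)|\le|v-v'|$, i.e. $v\mapsto\bar q^{\pm}(v,\omega)$ is $1$-Lipschitz. Setting $\Omega_0':=\Omega_0\cap\bigcap_{v\in\Q^N}\Omega_v$, which still has full probability, $\bar q^{\pm}(v,\cdot)$ is $\omega$-independent on $\Omega_0'$ for every rational $v$, hence by Lipschitz continuity for every $v\in\R^N$, which is the claim. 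The genuinely delicate point is the monotonicity step: because \eqref{thetabound} carries the error terms $T(\omega,\ep)$ and $\ep|x|$, one cannot route directly and must use the three-leg path and pass to the limit $s\to+\infty$ before $\ep\to0$, checking that both $\limsup$ and $\liminf$ survive the manipulation; everything else is bookkeeping.
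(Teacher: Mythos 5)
Your proof is correct, but it takes a genuinely different and somewhat heavier route than the paper's. The paper exploits the fact that the reachability estimate \eqref{thetabound} applies to \emph{arbitrary} pairs of points: writing $\theta(0,tv,\tau_y\omega)=\theta(y,y+tv,\omega)\le\theta(y,0,\omega)+\theta(0,tv,\omega)+\theta(tv,y+tv,\omega)$ and bounding the first and last terms by \eqref{thetabound}, one divides by $t$, lets $t\to+\infty$ and then $\ep\to0$ to get $\limsup_t t^{-1}\theta(0,tv,\tau_y\omega)\le\limsup_t t^{-1}\theta(0,tv,\omega)$ for \emph{every} $y\in\R^N$; exchanging the roles of $\omega$ and $\tau_y\omega$ gives equality, so the $\limsup$ (and likewise the $\liminf$) is invariant under the whole group $(\tau_y)$ and is therefore a.s.\ constant directly by the ergodicity assumption \eqref{eq:ass1}. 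You instead establish only a one-sided comparison along points reachable by the controlled flow --- a strictly weaker form of invariance --- and must then compensate by invoking Lemma~\ref{SubSolOmega}(ii), the tool the paper reserves for Lemma~\ref{lem:E+E-indepom}, where full shift-invariance genuinely is unavailable. Your three-leg estimate, the order of limits ($s\to+\infty$ before $\ep\to0$), the verification that $w^{\pm}$ is bounded and stationary, and the final rational-$v$/Lipschitz uniformization are all sound; indeed you are more explicit than the paper about measurability and about producing a single full-measure set valid for all $v$. The only criticism is economy: since \eqref{thetabound} already controls $\theta(y,0,\omega)$ and $\theta(tv,y+tv,\omega)$ for every $y$, the detour through trajectories and Lemma~\ref{SubSolOmega}(ii) buys nothing here that the direct shift-invariance argument does not give more cheaply.
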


\noindent {\bf Proof : } Since the arguments are similar, we only present the proof for the $\limsup$. Fix $\omega\in\Omega_0$, the latter been defined in the proof of Theorem \ref{reach}, and let $\ep>0$ be small. It follows that there exists $T=T(\omega,\ep)>0$ such that, for all $x,y\in \R^N$,
$$
\theta(x,y,\omega)\leq T(\omega,\ep) +\ep|x|+(1+\ep)|y-x|. 
$$
Then, for any $y\in \R^N$, we have
$$
\begin{array}{rl}
\theta(0,tv,\tau_y\omega)
= &
\theta(y,y+tv,\omega)\\
\leq  &
\theta(y,0,\omega)+\theta(0,tv,\omega)+\theta(tv,y+tv,\omega) \\
\leq &
T(\omega,\ep)+(1+2\ep) |y|+\theta(0,tv,\omega)+T(\omega,\ep)+\ep |tv|+(1+\ep)|y|,
\end{array}
$$
and, hence,
$$
\limsup_{t\to+\infty} \frac{1}{t}\theta(0,tv,\tau_y\omega)
\leq
\limsup_{t\to+\infty} \frac{1}{t}\theta(0,tv,\omega)
+\ep|v|\;,
$$
which proves the claim since $\ep$ is arbitrary.
\QED

We describe now the construction of the random control. We fix  $\bar a \in B$,  with $|\bar a|<1$, and we introduce some random variations around $\bar a$ in the following way. For $\ep\in (0, 1-|\bar a|)$ we set
$$
a_k = \left\{ \begin{array}{rl} \bar a + \ep e_k & {\rm for }\; k=1, \dots, N,\\[2mm]
\bar a -\ep e_{k-N} &  {\rm for }\; k=N+1, \dots, 2N,
\end{array}\right.
$$
where $\{e_k\}$ is the usual basis of $\R^N$.
Note that, since $a_k \in B$ for any $k$, the $a_k$'s are also admissible controls.

We denote by $Q$ the product probability measure on the set $D=(0,2\delta)\times \{1,\dots,2N\}$ given, for all $a,b\in (0,2\delta)$ such that $a<b$ and all $k\in \{1,\dots,2N\}$,
 by
$$
Q((a,b)\times \{k\}) = \frac{1}{2N}\int_{(a,b)}f(t)dt ,
$$
where $f:(0,2\delta)\to [0,1]$ satisfies $\int_0^{2\delta} f(t) dt =1$. This simply means that $t$ and $k$ are chosen independently with probability $f(t)$ and $1/2N$ respectively. For reasons which will become clear later in the section,
we choose the density $f$ so that
\be\label{eq:f}
\int_0^{2\delta}|t-\delta|f(t)dt \leq \delta^2 \ \  \text{ and } \ \  \int_0^{2\delta} t f(t) dt =\delta.
\ee

Having introduced the random setting, we now present the heuristic idea of the  construction of the random control $\alpha$. At time ${\bf \sigma}_0=0$, we choose at random (according to the probability $Q$) a pair $Z_0=({\bf t}_0, a_{{\bf k}_0})\in D$. We set $\alpha= a_{{\bf k}_0}$ on $ [0,{\bf t}_0]$ and ${\bf \sigma}_1= {\bf \sigma}_0+{\bf t}_0$. At time  ${\bf \sigma}_1$ we choose at random (again according to the probability $Q$) a new pair $Z_1=({\bf t}_1, a_{{\bf k}_1})\in D$, independent of $Z_0$, and set $\alpha= a_{{\bf k}_1}$ on $[{\bf \sigma}_1,{\bf \sigma}_1+{\bf t}_1]$ and ${\bf \sigma}_2= {\bf \sigma}_1+{\bf t}_1$. By induction this yields  almost surely with respect to $Q$ a control $\alpha : [0,+\infty)\to \{a_1, \dots, a_{2N}\}$ which is constant on each random interval $[{\bf \sigma}_i, {\bf \sigma}_{i+1}]$ of length at most $2\delta$. 

The first part of the proof consists in showing that, as $t\to\infty$, $X^\alpha_t/t$ is, a.s. in the product probability space, close to $\bar a$.
To prove this we first note that the system we are constructing is ergodic. Indeed, for $z=(t,k)\in D$, we consider the transformation $T^{z}\omega=\tau_{X^{0,0, a_k,\omega}_t}\omega$. It is clear  that $(T^z)_{z\in D}$ is a family of measure preserving transformations on $\Omega$. The following lemma states that this family  is ergodic.

\begin{lem}\label{lem:KakutaniErgo} If $E\in {\mathcal F}$ is invariant by $T^z$ for Q-almost every $z\in D$, then $\P(E)=0$ or $\P(E)=1$.
\end{lem}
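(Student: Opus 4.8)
The plan is to upgrade the hypothesis to full invariance of $E$ under the translation group $(\tau_x)_{x\in\R^N}$ and then to invoke the ergodicity assumption~\eqref{eq:ass1}. Write $g=\mathbf 1_E$, so the hypothesis reads $g\circ T^{(t,k)}=g$ $\P$-a.s.\ for each $k\in\{1,\dots,2N\}$ and Lebesgue-a.e.\ $t\in(0,2\delta)$. First I would remove the ``a.e.\ $t$'': since $a_k$ is a constant control, $(T^{(t,k)})_t$ is a measure preserving flow (Proposition~\ref{prop:Talpha}), and the set of $t$ for which $g\circ T^{(t,k)}=g$ a.s.\ is a subgroup of $(\R,+)$ containing a set of positive Lebesgue measure, hence all of $\R$. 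Thus $g$ is a.s.\ invariant under every $T^{(t,k)}$, $t\in\R$, $k\in\{1,\dots,2N\}$, and therefore under every finite word $T^{(s_m,k_m)}\circ\cdots\circ T^{(s_1,k_1)}$. Using \eqref{EqTau}, such a word equals $\omega\mapsto\tau_{Y(\omega;\bar s,\bar k)}\omega$, where $Y(\omega;\bar s,\bar k)$ is the endpoint of the concatenation of forward/backward trajectory pieces of \eqref{eq:DS} run with controls among $\{a_1,\dots,a_{2N}\}$. Setting $w(x,\omega)=g(\tau_x\omega)$ (a bounded, stationary function), this gives $w\big(Y(\omega;\bar s,\bar k),\omega\big)=w(0,\omega)$ a.s., for every $(\bar s,\bar k)$.

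The crux is a controllability statement, and this is where the particular perturbed controls pay off: for a.s.\ $\omega$ the vector fields $x\mapsto\pm\big(a_k+V(x,\omega)\big)$, $k=1,\dots,2N$, span $\R^N$ at every point, because $\tfrac12(a_j-a_{j+N})=\ep e_j$ for $j=1,\dots,N$. Hence the switched $a_k$-system, run forward and backward, is completely controllable --- its reachable set from any point is all of $\R^N$ --- and, by standard control theory, near a concatenation reaching a prescribed open set the endpoint map $\bar s\mapsto Y(\omega;\bar s,\bar k)$ can be taken to be a submersion.

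Combining the two ingredients, I would integrate the identity $w\big(Y(\omega;\bar s,\bar k),\omega\big)=w(0,\omega)$ in $\bar s$ (Tonelli): for a.s.\ $\omega$ it holds for Lebesgue-a.e.\ $\bar s$. Writing the submersion of the previous step locally as a coordinate projection and using Fubini along its fibres converts this into $w(\cdot,\omega)=w(0,\omega)$ Lebesgue-a.e.\ on the (open) reachable set, and letting the pattern $\bar k$ vary so that these open sets exhaust $\R^N$ yields, for a.s.\ $\omega$, $g(\tau_x\omega)=g(\omega)$ for Lebesgue-a.e.\ $x\in\R^N$. One more application of Fubini produces a conull set $X_0\subseteq\R^N$ with $g\circ\tau_x=g$ a.s.\ for every $x\in X_0$; since $X_0$ is conull, every $z\in\R^N$ splits as $z=x_1+x_2$ with $x_1,x_2\in X_0$, so $g\circ\tau_z=(g\circ\tau_{x_1})\circ\tau_{x_2}=g\circ\tau_{x_2}=g$ a.s.\ (as $\tau_{x_2}$ is measure preserving). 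Hence $E$ is $\tau_z$-invariant modulo $\P$-null sets for every $z\in\R^N$; a standard argument then replaces $E$ by an honestly $\tau$-invariant set of the same probability, and \eqref{eq:ass1} forces $\P(E)\in\{0,1\}$.

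The main difficulty is the controllability/measure-transfer part: proving that the switched $a_k$-system reaches every point of $\R^N$ in a sufficiently robust (submersive) way --- for which the span relation $\tfrac12(a_j-a_{j+N})=\ep e_j$ hard-wired into the definition of the $a_k$ is essential --- and then promoting the invariance of $\mathbf 1_E$ from a full-measure set of control parameters to a full-measure set of translations of $\R^N$, a point that is delicate precisely because a Lipschitz image of a null set in the higher-dimensional parameter space need not be null.
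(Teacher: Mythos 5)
Your argument is essentially correct, but it takes a genuinely different route from the paper. The paper proceeds by (i) extending the measure-theoretic invariance from a dense set of simple controls to all $\alpha\in{\mathcal A}_0$ via an $L^1$-limit (Step~1), (ii) upgrading to a pointwise statement by passing to the set $\hat E(\omega)$ of positive-density points (Step~2), (iii) showing $\hat E(\omega)$ equals its closure up to a null set using the cone property of Lemma~\ref{lem:cone} (Step~3), and (iv) observing that ${\bf 1}_{\R^N\setminus\overline{\hat E(\omega)}}$ is a viscosity supersolution of a $G$-type inequality, whence Lemma~\ref{SubSolOmega} (ergodic theorem plus $\div V=0$) forces it to be constant. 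Steps~2--3 exist precisely to manufacture the topological regularity that the viscosity-solution machinery in Step~4 requires. You bypass all of that: you upgrade from ``$Q$-a.e.\ $z$'' to ``all $t$'' via the Steinhaus/subgroup argument for the one-parameter flows $T^{a_k}_t$, then replace the PDE step by open-loop controllability of the switched $\pm(a_k+V)$-system (linear spanning from $\tfrac12(a_j-a_{j+N})=\ep e_j$, so orbits are open, hence all of $\R^N$) and a coarea/Fubini transfer along submersive endpoint maps, landing directly on $\tau_x$-invariance of $E$ for a.e.\ $x$, hence for all $x$ by $X_0+X_0=\R^N$, and then on the ergodicity hypothesis~\eqref{eq:ass1}. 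This is more dynamical and does not need $\div V=0$ inside this lemma (the paper uses it through Lemma~\ref{SubSolOmega}), which is a genuine simplification in structure. What your approach buys in directness it pays back in two places where ``standard'' hides real work: (a) the submersion/Sard step---with $V\in{\mathcal C}^{1,1}$ the flows and endpoint maps are only $C^1$, and Sard fails for $C^1$ maps from higher to lower dimensions, so you must, as you indicate, explicitly build concatenations whose endpoint map has full rank near the base point and argue by covering, rather than by a generic-smoothness shortcut; and (b) the passage from ``$g\circ\tau_z=g$ $\P$-a.s.\ for every $z$'' to the form of invariance in~\eqref{eq:ass1}---this requires replacing $E$ by an honestly invariant set of equal measure (e.g.\ via the multiparameter Birkhoff averages $\bar g(\omega)=\lim_R\frac{1}{|B_R|}\int_{B_R}g(\tau_x\omega)\,dx$), which is routine but not automatic from the statement of~\eqref{eq:ass1}. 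With these two points spelled out, the proof goes through and is a legitimate alternative to the paper's argument. (One minor remark on the paper itself: the aside that $A$ is ``the convex envelope'' of $\{a_k\}$ is inaccurate---the convex hull of $\{\bar a\pm\ep e_j\}$ is the $\ell^1$-ball, not the cube---but neither your argument nor the paper's actually uses that identification.)
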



We postpone the proof of Lemma~\ref{lem:KakutaniErgo}, which is rather technical, to the end of the section and we proceed with the (rigorous) construction of the control $\alpha$.

Let  $(Z_n=({\bf t}_n, {\bf k}_n))_{n\in \Z}$ be a sequence of i.i.d. random variables on $D$ with law $Q$ which is taken to be an element of the ``canonical'' probability space $(\overline \Omega, \overline  {\mathcal F}, \overline \P)$ with  $\bar \Omega= D^{\Z}$.
We denote by $\bar \tau : \overline \Omega\to \overline \Omega$ the usual right-shift, i.e., for $\bar \omega=( z_n)_{n\in \Z}\in \overline \Omega$, $\bar \tau\bar\omega= (z_{n+1})_{n\in \Z}$. By construction, $\bar \tau$ is a measure preserving transformation of $\overline \Omega$ and  the sequence $(Z_n)_{n\in \Z}$ is stationary with respect to $\bar \tau$. \\

Let $F\in L^1(\Omega)$. Since, in view of Lemma~\ref{lem:KakutaniErgo}, the probability space $(\Omega, {\mathcal F}, \P)$ is ergodic with respect to the family $(T^z)_{z\in D}$, the Kakutani random ergodic Theorem \cite{Ka51}  states that, in $L^1(\Omega\times{\overline \Omega})$ and a.s. with respect to the product measure $\P\otimes \bar \P$,
\be\label{RandomErgoTheo}
\lim_{n\to +\infty} \frac{1}{n} \sum_{i=0}^{n-1} F\left( T^{Z_i(\bar \omega)}\circ \dots \circ T^{Z_0(\bar \omega)}\omega\right) =\E[F].
\ee

As described above, the random control $\alpha=\alpha(\bar \omega)$ is given by
\be\label{eq:rc}
\alpha(\bar \omega)(t)= a_{{\bf k}_n(\bar \omega)} \; {\rm for }\; \ t\in [\sigma_n(\bar \omega),\sigma_{n+1}(\bar \omega))
\quad {\rm where }\quad
{\bf \sigma}_n(\bar \omega)= \sum_{i=0}^n {\bf t}_i(\bar \omega)\;.
\ee

A simple induction argument yields
$$
T^{Z_i(\bar \omega)}\circ \dots \circ T^{Z_0(\bar \omega)}\omega=
\tau_{X^{0,0,\alpha(\bar \omega),\omega}_{{\bf \sigma}_i(\bar \omega)}}\omega.
$$

Hence, applying (\ref{RandomErgoTheo}) to the map $V$, we find that, $\P\otimes \bar \P$ a.s. and in $L^1(\Omega\times{\overline \Omega})$,
\be\label{CvVX}
\lim_{n\to +\infty} \frac{1}{n} \sum_{i=0}^{n-1} V\left( X^{0,0,\alpha(\bar \omega),\omega}_{{\bf \sigma}_i(\bar \omega)}, \omega\right) =\E[V]=0\;,
\ee
a fact that implies, as we explain next, that the sequence $X^{0,0,\alpha(\bar \omega), \omega}_t$ is asymptotically close to the line $\R\bar a$.

We have:

\begin{lem}\label{lem:limXovert}
Let $\alpha \in {\mathcal A}$ be defined by \eqref{eq:rc}. There exists $C>0$ depending only on $\|V\|_{{\mathcal C}^{1,1}}$ such that, a.s. in   $\P\otimes \bar \P$,
\be\label{limXovert}
\limsup_{t\to+\infty} \left|\frac{1}{t} X^{0,0,\alpha(\bar \omega), \omega}_t-\bar a\right| \leq C\delta. 
\ee
\end{lem}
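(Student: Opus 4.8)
The plan is to show that the dynamics $X^{0,0,\alpha(\bar\omega),\omega}_t$, sampled at the random times $\sigma_n(\bar\omega)$, stays close to $\R\bar a$ because the control pieces average (by construction \eqref{eq:f}) to $\delta\bar a$ per step, while the drift contribution averages to $\E[V]=0$ per step by \eqref{CvVX}. First I would estimate the increment over one random interval $[\sigma_i,\sigma_{i+1}]$: writing $X^{\alpha}_{\sigma_{i+1}}-X^{\alpha}_{\sigma_i}=\int_{\sigma_i}^{\sigma_{i+1}}(a_{\mathbf k_i}+V(X^{\alpha}_s,\omega))ds$, I would Taylor-expand $V(X^{\alpha}_s,\omega)$ around $X^{\alpha}_{\sigma_i}$, using $\|DV\|\le\|V\|_{\mathcal C^{1,1}}$ and $|X^{\alpha}_s-X^{\alpha}_{\sigma_i}|\le Ms\le 2M\delta$ on the interval, to get
\[
X^{\alpha}_{\sigma_{i+1}}-X^{\alpha}_{\sigma_i}={\bf t}_i\,a_{\mathbf k_i}+{\bf t}_i\,V(X^{\alpha}_{\sigma_i},\omega)+O(\delta^2).
\]
Summing over $i=0,\dots,n-1$ gives $X^{\alpha}_{\sigma_n}=\sum_i{\bf t}_i a_{\mathbf k_i}+\sum_i{\bf t}_i V(X^{\alpha}_{\sigma_i},\omega)+O(n\delta^2)$.

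Next I would analyze the two sums divided by $n$ as $n\to\infty$. For the control term, $\frac1n\sum_i{\bf t}_i a_{\mathbf k_i}$ is a Cesàro average of i.i.d. $D$-valued variables, so by the (ordinary) strong law of large numbers it converges $\bar\P$-a.s. to $\E_Q[{\bf t}\,a_{\mathbf k}]$. Because ${\bf t}$ and ${\bf k}$ are independent under $Q$, and $\frac1{2N}\sum_{k=1}^{2N}a_k=\bar a$ (the $\pm\ep e_k$ terms cancel), this expectation equals $(\int_0^{2\delta}tf(t)dt)\,\bar a=\delta\bar a$ by the second part of \eqref{eq:f}. For the drift term, I would combine \eqref{CvVX} — which gives $\frac1n\sum_i V(X^{\alpha}_{\sigma_i},\omega)\to 0$ $\P\otimes\bar\P$-a.s. — with the fact that ${\bf t}_i\in(0,2\delta)$ is bounded; a summation-by-parts / direct bound argument then gives $\frac1n\big|\sum_i{\bf t}_i V(X^{\alpha}_{\sigma_i},\omega)\big|\le 2\delta\cdot\frac1n\sum_i|V(X^{\alpha}_{\sigma_i},\omega)|$, and since $\frac1n\sum_i|V(X^{\alpha}_{\sigma_i},\omega)|\to\E[|V|]\le\|V\|$ (again by Kakutani applied to $|V|$, or simply bounded by $\|V\|$), this term is $O(\delta)$ in the limit. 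Therefore $\frac1n X^{\alpha}_{\sigma_n}\to \ell(\omega,\bar\omega)$ with $|\ell-\delta\bar a|\le C\delta$, $\P\otimes\bar\P$-a.s.

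Then I would convert the limit along $\sigma_n$ to a limit along continuous time $t$ and rescale correctly. By the SLLN, $\frac{\sigma_n}{n}=\frac1n\sum_{i=0}^n{\bf t}_i\to \E_Q[{\bf t}]=\delta$ $\bar\P$-a.s.; since the intervals have length at most $2\delta$, for any $t$ there is $n=n(t)$ with $\sigma_n\le t<\sigma_{n+1}$ and $|X^{\alpha}_t-X^{\alpha}_{\sigma_n}|\le 2M\delta$, and $n(t)/t\to 1/\delta$. Hence $\frac1t X^{\alpha}_t=\frac{n}{t}\cdot\frac1n X^{\alpha}_{\sigma_n}+o(1)\to \frac1\delta\,\ell$, and $\big|\frac1\delta\ell-\bar a\big|=\frac1\delta|\ell-\delta\bar a|\le C$. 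This last bound is only $O(1)$, not $O(\delta)$, so I must be more careful: I should carry the error terms as multiples of $\delta$ relative to the leading $\delta\bar a$ term, i.e.\ write $\ell=\delta\bar a+\delta\,r$ with $|r|\le C$ coming from $\int|t-\delta|f(t)dt\le\delta^2$ and the $O(\delta)$ drift bound — checking that the drift error is genuinely $\delta\cdot o(1)+\delta^2\cdot O(1)$-type, after dividing the "$O(\delta)$" above by the extra factor $\delta$ hidden in the $n/t\sim1/\delta$ rescaling. Concretely: $X^{\alpha}_{\sigma_n}-n\delta\bar a=\sum_i({\bf t}_i-\delta)a_{\mathbf k_i}+\sum_i{\bf t}_i V(\cdots)+O(n\delta^2)$, the first sum is $o(n)$ with the bound $\limsup\frac1n|\sum({\bf t}_i-\delta)a_{\mathbf k_i}|\le 2\E_Q|{\bf t}-\delta|\le 2\delta^2$ via SLLN and $|a_{\mathbf k_i}|\le 2$, the second is $o(n)$ outright, so dividing by $\sigma_n\sim n\delta$ yields $|\frac1{\sigma_n}X^{\alpha}_{\sigma_n}-\bar a|\le C\delta+o(1)$, and the continuous-time passage adds only $o(1)$.

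The main obstacle I anticipate is precisely this bookkeeping of the error scale: one must ensure every error term is genuinely $O(\delta)$ \emph{after} the correct normalization by $\sigma_n\sim n\delta$ rather than by $n$, which forces using the quantitative condition $\int_0^{2\delta}|t-\delta|f(t)dt\le\delta^2$ in \eqref{eq:f} (so that the control fluctuations are $O(\delta^2)$ per step, i.e.\ $O(\delta)$ after normalization) and using $\E[V]=0$ exactly (so the drift term is genuinely $o(n)$ rather than $O(n\delta)$). A secondary technical point is that \eqref{CvVX} and the SLLN hold on a $\P\otimes\bar\P$-full set depending on the relevant integrands, and one must take a countable intersection (over, say, $V$, $|V|$, the coordinate functions) to get a single full-measure set on which \eqref{limXovert} holds; the coupling of the "$\omega$" randomness through $T^{Z_i(\bar\omega)}$ with the "$\bar\omega$" randomness is exactly what the Kakutani ergodic theorem is invoked for, so no further difficulty arises there.
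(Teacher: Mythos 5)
Your argument follows the paper's proof very closely: the per-interval Taylor estimate $X^{\alpha}_{\sigma_{i+1}}-X^{\alpha}_{\sigma_i}={\bf t}_i(a_{{\bf k}_i}+V(X^{\alpha}_{\sigma_i},\omega))+O(\delta^2)$, the SLLN applied to the $({\bf t}_i,a_{{\bf k}_i})$, the use of \eqref{CvVX} for the drift, the normalization by $\sigma_n\sim n\delta$ using \eqref{eq:f}, and the passage from $\sigma_n$ to continuous $t$ are all exactly as in the paper, and you correctly identify and repair the central scale-bookkeeping point (that one must normalize by $\sigma_n\sim n\delta$ rather than by $n$, which is what makes $\int_0^{2\delta}|t-\delta|f(t)\,dt\leq\delta^2$ the decisive design choice). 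Two small inaccuracies are worth flagging. First, the display $X^{\alpha}_{\sigma_n}-n\delta\bar a=\sum_i({\bf t}_i-\delta)a_{{\bf k}_i}+\sum_i{\bf t}_i V(\cdots)+O(n\delta^2)$ silently drops the term $\delta\sum_i(a_{{\bf k}_i}-\bar a)$; it is $o(n\delta)$ by the SLLN so nothing changes, but it should appear. Second, your assertion that $\sum_i{\bf t}_i V(X^\alpha_{\sigma_i},\omega)$ is ``$o(n)$ outright'' is not a consequence of \eqref{CvVX} alone (which controls $\frac1n\sum_i V$, not $\frac1n\sum_i{\bf t}_i V$), nor of the ``direct bound'' you cite (which only gives $O(n\delta)$, as your first pass shows). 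The paper instead splits ${\bf t}_i V=\delta V+({\bf t}_i-\delta)V$: the first piece is $o(n)$ by \eqref{CvVX}, the second is $\leq\|V\|\sum_i|{\bf t}_i-\delta|\sim n\|V\|\delta^2$ by the SLLN together with \eqref{eq:f}. This yields $o(n)+O(n\delta^2)$, which after dividing by $\sigma_n\sim n\delta$ is $o(1)+O(\delta)$ --- exactly what is needed, even though it is not $o(n)$. (Your stronger claim can in fact be rescued by viewing $({\bf t}_i-\delta)V(X^\alpha_{\sigma_i},\omega)$ as a bounded martingale-difference sequence, since ${\bf t}_i$ is independent of $X^\alpha_{\sigma_i}$; but this extra machinery is unnecessary, and the crude $O(n\delta^2)$ bound is cleaner and is what the paper uses.)
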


\noindent {\bf Proof : } To simplify the notation, for the rest of this proof we omit $\omega$ and $\bar \omega$ and we simply write set $X^{\alpha}_{{\bf \sigma}_n}$ in place of
$X^{0,0,\alpha(\bar \omega),\omega}_{{\bf \sigma}_n(\bar \omega)}$.

Since $V$ is Lipschitz continuous and ${\bf t}_i\leq \delta$,
$$
\left| X^{\alpha}_{{\bf \sigma}_{i+1}}-
X^{\alpha}_{{\bf \sigma}_i }- {\bf t}_i \left(V(X^{\alpha}_{{\bf \sigma}_{i}})+ a_{{\bf k}_i}\right)\right| \leq C {\bf t}_i^2\leq C \delta^2\;
$$
and
$$
\frac{1}{{\bf \sigma}_n} X^{\alpha}_{{\bf \sigma}_n}
= \frac{1}{{\bf \sigma}_n} \sum_{i=0}^{n-1} \left(X^{\alpha}_{{\bf \sigma}_{i+1}}-
X^{\alpha}_{{\bf \sigma}_i}\right)
= \frac{1}{{\bf \sigma}_n} \sum_{i=0}^{n-1} {\bf t}_i \left(V(X^{\alpha}_{{\bf \sigma}_{i}})+ a_{{\bf k}_i}\right) +O(\frac{n\delta^2}{{\bf \sigma}_n}).
$$

Therefore
\be\label{XoverTaun}
\frac{1}{{\bf \sigma}_n} X^{\alpha}_{{\bf \sigma}_n}
= \frac{\delta}{{\bf \sigma}_n} \sum_{i=0}^{n-1}  V(X^{\alpha}_{{\bf \sigma}_{i}})
+\frac{1}{{\bf \sigma}_n} \sum_{i=0}^{n-1} ({\bf t}_i-\delta) V(X^{\alpha}_{{\bf \sigma}_{i}})+\frac{1}{{\bf \sigma}_n} \sum_{i=0}^{n-1}  {\bf t}_ia_{{\bf k}_i}
+O(\frac{n\delta^2}{{\bf \sigma}_n}).
\ee

Next we observe that, as $n\to\infty$, the law of large numbers yields that, $\overline \P$ a.s.,
\be\label{LLN}
\frac{{\bf \sigma}_n}{n} \to  \E[{\bf t}_0]=  \delta\;, \;
\frac{1}{n} \sum_{i=0}^{n-1}  {\bf t}_ia_{{\bf k}_i} \to \delta \bar a\; \ {\rm and} \ 
\frac{1}{n} \sum_{i=0}^{n-1} |{\bf t}_i-\delta|\to \E[|{\bf t}_0-\delta|]\leq \delta^2, 
\ee
where $\bar \E$ denotes the expectation with respect to the probability measure $\bar \P$.

Combining (\ref{CvVX}), (\ref{XoverTaun}) and (\ref{LLN}), we get, $\P\times\overline\P$ a.s., the estimate
$$
\limsup_{n\to\infty} \left|\frac{1}{{\bf \sigma}_n} X^{\alpha}_{{\bf \sigma}_n}-\bar a\right| \leq C\delta.
$$

Finally, using that (i)  for any $t\geq0$, there is some $n$ such that ${\bf \sigma}_n\leq t\leq {\bf \sigma}_{n+1}$ with ${\bf \sigma}_{n+1}-
{\bf \sigma}_{n}\leq 2\delta$, and (ii) the dynamics are bounded, we obtain
(\ref{limXovert}).
\QED

The next lemma is about the averaged long time  behavior of the minimal time along the special trajectory  $X^{0,0,\alpha(\bar \omega), \omega}_{\sigma_n(\bar \omega)}$.

We have:

\begin{lem}\label{lem:Gamma}
There exists a random variable $\Gamma:\Omega\times\overline\Omega\to\R$ such that the averaged minimal time $n^{-1}\theta(0,
X^{0,0,\alpha(\bar \omega), \omega}_{\sigma_n(\bar \omega)}, \omega)$ converges, as $n\to\infty$ and  a.s. in  $(\omega, \bar \omega)$, to $\Gamma(\omega,\bar \omega)$.
\end{lem}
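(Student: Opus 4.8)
The plan is to apply the subadditive ergodic theorem to a suitable two-parameter process built from $\theta$ along the trajectory $X^{0,0,\alpha(\bar\omega),\omega}_{\sigma_n(\bar\omega)}$, using the product probability space $(\Omega\times\overline\Omega,\mathcal F\otimes\overline{\mathcal F},\P\otimes\overline\P)$ as the underlying space. For integers $0\leq m\leq n$ define
\[
\Psi_{m,n}(\omega,\bar\omega)=\theta\bigl(X^{0,0,\alpha(\bar\omega),\omega}_{\sigma_m(\bar\omega)},\,X^{0,0,\alpha(\bar\omega),\omega}_{\sigma_n(\bar\omega)},\,\omega\bigr).
\]
First I would record the three structural properties. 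Subadditivity $\Psi_{m,n}\leq\Psi_{m,\ell}+\Psi_{\ell,n}$ for $m\leq\ell\leq n$ is immediate from the triangle inequality for the minimal time (concatenate the two controls). The uniform bound is also immediate and is the whole point of this construction: since the constant controls $a_{\mathbf k_i}\in B$ drive the system directly from $X^{0,0,\alpha,\omega}_{\sigma_m}$ to $X^{0,0,\alpha,\omega}_{\sigma_n}$ in time $\sigma_n-\sigma_m$, we get $0\leq\Psi_{m,n}\leq\sigma_n(\bar\omega)-\sigma_m(\bar\omega)$, and by the law of large numbers $\sigma_n/n\to\delta$ a.s., so $\Psi_{0,n}/n$ is $L^1$-dominated (in particular $\E[\Psi_{0,1}]<\infty$, with no appeal to any integrability of $\theta$ itself). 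The remaining ingredient is stationarity: I need to exhibit a measure-preserving transformation $\mathcal T$ of $\Omega\times\overline\Omega$ under which $\Psi_{m+1,n+1}=\Psi_{m,n}\circ\mathcal T$. The natural candidate is
\[
\mathcal T(\omega,\bar\omega)=\bigl(T^{Z_0(\bar\omega)}\omega,\ \bar\tau\bar\omega\bigr),
\]
where $T^z$ is the transformation introduced before Lemma~\ref{lem:KakutaniErgo} and $\bar\tau$ is the shift on $\overline\Omega=D^{\Z}$. That $\bar\tau$ preserves $\overline\P$ (the $Z_n$ are i.i.d.) and that $T^z$ preserves $\P$ (Proposition~\ref{prop:Talpha}, applied with the constant control $a_k$) are known; the skew-product $\mathcal T$ then preserves $\P\otimes\overline\P$ by a routine Fubini argument, since the dependence of $T^{Z_0(\bar\omega)}$ on $\bar\omega$ is only through the first coordinate which is integrated out in the $\overline\P$-direction. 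The cocycle identity $T^{Z_i(\bar\omega)}\circ\cdots\circ T^{Z_0(\bar\omega)}\omega=\tau_{X^{0,0,\alpha(\bar\omega),\omega}_{\sigma_i(\bar\omega)}}\omega$ recorded in the excerpt, together with $\sigma_{n}(\bar\tau\bar\omega)=\sigma_{n+1}(\bar\omega)-\sigma_0(\bar\omega)$ and the stationarity identity $X^{x_0,t_0,\alpha,\tau_y\omega}_t=X^{x_0+y,t_0,\alpha,\omega}_t-y$ from \eqref{EqTau}, gives exactly $\Psi_{m,n}(\mathcal T(\omega,\bar\omega))=\Psi_{m+1,n+1}(\omega,\bar\omega)$ after a short computation of how the base point and the terminal point transport under a single step.

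With these three facts in hand, Kingman's subadditive ergodic theorem yields a random variable $\Gamma\in L^1(\Omega\times\overline\Omega)$ with $n^{-1}\Psi_{0,n}\to\Gamma$ a.s. and in $L^1$, which is precisely the assertion of the Lemma with $\Gamma(\omega,\bar\omega)=\lim_n n^{-1}\theta(0,X^{0,0,\alpha(\bar\omega),\omega}_{\sigma_n(\bar\omega)},\omega)$ (note $X^{0,0,\alpha,\omega}_{\sigma_0}=X^{0,0,\alpha,\omega}_0=0$ since $\sigma_0=\mathbf t_0$... — one should be slightly careful with the index convention $\sigma_n=\sum_{i=0}^n\mathbf t_i$ versus starting the control at $\sigma_0=0$; I would simply set $\Psi_{0,n}=\theta(0,X_{\sigma_{n-1}}^\alpha,\omega)$ or shift the indexing of $\sigma$ so that $\sigma_0=0$, which is harmless). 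One could additionally invoke Lemma~\ref{lem:KakutaniErgo} and the ergodicity of $\bar\tau$ to argue that $\mathcal T$ is ergodic on the product space and hence that $\Gamma$ is a.s. constant, but that is not needed for the present statement and is handled later in the section.

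The main obstacle, as usual with these skew-product arguments, is the verification of the exact stationarity identity $\Psi_{m,n}\circ\mathcal T=\Psi_{m+1,n+1}$ and the measure-preservation of $\mathcal T$: one must bookkeep carefully how the initial point $X_{\sigma_m}^\alpha$, the terminal point $X_{\sigma_n}^\alpha$, and the medium $\omega$ are simultaneously transported by one application of $T^{Z_0}\times\bar\tau$, using \eqref{EqTau} to absorb the shift of the medium into a translation of the trajectory, and using the independence of $Z_0$ from the remaining $Z_n$ for the Fubini step. Everything else — subadditivity, the $\sigma_n-\sigma_m$ bound, the integrability — is essentially immediate, which is exactly why this particular quantity was chosen in place of $\theta(0,t\bar a,\omega)$.
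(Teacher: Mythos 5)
Your proposal is correct and follows essentially the same route as the paper. You both build the same skew-product transformation (your $\mathcal T(\omega,\bar\omega)=(T^{Z_0(\bar\omega)}\omega,\bar\tau\bar\omega)$ is exactly the paper's $\tilde T(\omega,\bar\omega)=\bigl(T^{a_{\mathbf k_0(\bar\omega)}}_{\mathbf t_0(\bar\omega)}\omega,\bar\tau\bar\omega\bigr)$), and you both extract the crucial a priori bound $\theta(0,X^\alpha_{\sigma_n},\omega)\leq\sigma_n\leq 2\delta n$ as the substitute for the unavailable integrability of $\theta$ itself, then invoke Kingman. The only presentational difference is that the paper works directly with the one-parameter sequence $\mathcal Z(n,\omega,\bar\omega)=\theta(0,X^\alpha_{\sigma_n},\omega)$ and verifies the subadditivity $\mathcal Z(n+m)\leq\mathcal Z(n)+\mathcal Z(m)\circ\tilde T^n$ by an explicit trajectory-splicing computation, whereas you set up a two-parameter stationary subadditive family $\Psi_{m,n}$; these are equivalent formulations of the same application of Kingman's theorem. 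Your remark about the off-by-one mismatch between $\sigma_n=\sum_{i=0}^n\mathbf t_i$ and the heuristic $\sigma_0=0$ is a genuine (and harmless) indexing glitch in the paper; shifting to $\sigma_n=\sum_{i=0}^{n-1}\mathbf t_i$ as you suggest is the clean fix.
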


\noindent {\bf Proof : }  
We introduce on $\Omega\times \bar \Omega$ the measure preserving transformation
$$
\tilde T(\omega, \bar \omega)= \left( T^{a_{{\bf k}_0}(\bar \omega)}_{{\bf t}_0(\bar \omega)}\omega , \bar \tau \bar \omega\right),
$$
and note that, in view of \eqref{eq:rc}, for any $n\in \N$,
$$
\tilde T^n (\omega, \bar \omega)= \left( \tau_{X^{0,0, \alpha(\bar \omega), \omega}_{\sigma_n(\bar \omega)}}\omega, \bar \tau^n \bar \omega\right).
$$

Consider the family of random variables
$$
({\mathcal Z}(n,\omega,\bar \omega))_{n\in\N}= (\theta(0,X^{0,0,\alpha(\bar \omega),\omega}_{\sigma_n(\bar \omega)},\omega))_{n\in\N}.
$$

The first observation is that each ${\mathcal Z}(n,\omega,\bar \omega)$ is bounded and, hence, integrable. Indeed, since $X^{0,0,\alpha(\bar \omega),\omega}_t$ is a solution of \eqref{eq:DS}, we have
$$
{\mathcal Z}(n,\omega,\bar \omega)\leq \sigma_n(\bar \omega)\leq 2\delta n\;.
$$

It also turns  out that the family is sub-additive, i.e., for all $n,m\in\N$ and $(\omega,\bar \omega)\in\Omega\times\bar  \Omega$,
\be\label{Zsubadditive}
{\mathcal Z}(n+m,\omega,\bar \omega)\leq {\mathcal Z}(n,\omega,\bar \omega)+{\mathcal Z}(m,\tilde T^n(\omega,\bar \omega)).
\ee

To see this we first remark that we clearly have
$$
{\mathcal Z}(n+m,\omega,\bar \omega)\leq {\mathcal Z}(n,\omega,\bar \omega)+\theta(X^{0,0,\alpha(\bar \omega),\omega}_{\sigma_n(\bar \omega)},
X^{0,0,\alpha(\bar \omega),\omega}_{\sigma_{n+m}(\bar \omega)}, \omega).
$$

Since, for any $n\in \N$,
$$
\sigma_{m+n}(\bar \omega)= \sigma_n(\bar \omega)+\sigma_m(\bar \tau^n\bar \omega)\;,
$$
the semi-group property of the flow yields
$$
X^{0,0,\alpha(\bar \omega),\omega}_{\sigma_{m+n}(\bar \omega)}
=
X^{\sigma_n(\bar \omega),X^{0,0,\alpha(\bar \omega),\omega}_{\sigma_{n}(\bar \omega)},\ \alpha(\bar \omega),\ \omega}_{\sigma_n(\bar \omega)+\sigma_m(\bar \tau^n\bar \omega)}
=
X^{0,X^{0,0,\alpha(\bar \omega),\omega}_{\sigma_{n}(\bar \omega)},\ \alpha(\bar \omega)(\cdot+\sigma_n(\bar \omega)),\ \omega}_{\sigma_m(\bar \tau^n\bar \omega)}\;.
$$

The definitions of the random control $\alpha$ and the random time ${\bf \sigma}_n$ also imply that
$$
\alpha(\bar \omega)(\cdot+\sigma_n(\bar \omega))
=
\alpha(\bar \tau^n\bar \omega)(\cdot)\;,
$$
and, thus, 
$$
X^{0,0,\alpha(\bar \omega),\omega}_{\sigma_{m+n}(\bar \omega)}
=
X^{0,X^{0,0,\alpha(\bar \omega),\omega}_{\sigma_{n}(\bar \omega)},\alpha(\bar \tau^n\bar \omega),\omega}_{\sigma_m(\bar \tau^n\bar \omega)}
=
X^{0,0,\alpha(\bar \tau^n\bar \omega),\tau_{X^{0,0,\alpha(\bar \omega),\omega}_{\sigma_{n}(\bar \omega)}}\omega}_{\sigma_m(\bar \tau^n\bar \omega)}
+ X^{0,0,\ \alpha(\bar \omega),\omega}_{\sigma_{n}(\bar \omega)}\;.
$$

Set $Y^{\omega,\bar \omega}= X^{0,0,\alpha(\bar \omega),\omega}_{\sigma_{n}(\bar \omega)}$. Since, for any $x,y,z\in \R^N$,
$$
\theta(x+z,y+z,\omega)= \theta(x,y,\tau_z\omega)
$$
we obtain \eqref{Zsubadditive} from the following string of equalities:
\begin{align*}
\theta(X^{0,0,\alpha(\bar \omega),\omega}_{\sigma_n(\bar \omega)},
X^{0,0,\alpha(\bar \omega),\omega}_{\sigma_{n+m}(\bar \omega)}, \omega)\; = & \;
\theta \left(Y^{\omega,\bar \omega},
X^{0,0,\alpha(\bar \tau^n\bar \omega),\tau_{Y^{\omega,\bar \omega}}\omega}_{\sigma_n(\bar \tau^n\bar \omega)}+Y^{\omega,\bar \omega},
\omega\right) \\
= & \;
\theta \left(0,
X^{0,0,\alpha(\bar \tau^n\bar \omega),\tau_{Y^{\omega,\bar \omega}}\omega}_{\sigma_n(\bar \tau^n\bar \omega)},
\tau_{Y^{\omega,\bar \omega}}\omega\right) \\
=&\; {\mathcal Z}(m,\tau_{Y^{\omega,\bar \omega}}\omega,\bar \tau^n\bar \omega) \;
=  \; {\mathcal Z}(m,\tilde T^n(\omega,\bar \omega)).
\end{align*}

It follows now from Kingman's sub-additive ergodic theorem (see \cite{St89}) that there exists  a random variable $\Gamma:\Omega\times\bar \Omega\to\R$ such that, a.s. in $(\omega,\bar \omega)$,
\be\label{defGamma}
\lim_{n\to\infty} n^{-1}{\mathcal Z}(n,\omega,\bar \omega) =\Gamma(\omega,\bar \omega). 
\ee
\QED

Notice that in \eqref{defGamma} the long-time averaged limit $\Gamma$ is not deterministic, since the underlying measure preserving transformation is not itself ergodic. On the other hand, in view of the reachability estimate and Lemma~\ref{limsupliminf}, the existence of $\Gamma$ is enough to yield the existence of an a.s. in $\omega$ deterministic limit for $t^{-1} \theta(0, t\bar a)$.

We have:

\begin{lem}\label{lem:timeconst} There exist $\Omega_0\subseteq\Omega$ of full probability and a positively homogeneous of degree $1$ map $\overline q:\R^N\to \R_+$ such that, for all $v\in\R^N$,
\be\label{EstiBarq}
M^{-1} |v| \leq \bar q(v)\leq |v| \;,
\ee
and, for all $\bar a \in\R^N$ and $\omega\in\Omega_0$,  $t^{-1}\theta(0, t\bar a,\omega)$ converges, as $t\to \infty$,   to  $\overline  q(\bar a)$.
\end{lem}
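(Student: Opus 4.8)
The plan is to upgrade the deterministic quantities supplied by Lemma~\ref{limsupliminf}, namely $\bar q^+(v):=\limsup_{t\to+\infty}t^{-1}\theta(0,tv,\omega)$ and $\bar q^-(v):=\liminf_{t\to+\infty}t^{-1}\theta(0,tv,\omega)$ (which are a.s.\ independent of $\omega$), to an honest limit by proving $\bar q^+=\bar q^-$. Two soft observations come first. From Theorem~\ref{reach} with $x=0$ one has $\theta(0,tv,\omega)\le T(\omega,\ep)+(1+\ep)t|v|$, so dividing by $t$ and letting $\ep\downarrow0$ gives $\bar q^+(v)\le|v|$; since the admissible dynamics move with speed at most $M$, $\theta(0,tv,\omega)\ge M^{-1}t|v|$, whence $\bar q^-(v)\ge M^{-1}|v|$. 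Next, applying Theorem~\ref{reach} to $\theta(tv,tv',\omega)$ together with the triangle inequality for $\theta$ and letting $\ep\downarrow0$ shows that $\bar q^+$ and $\bar q^-$ are both $1$-Lipschitz, while the substitution $s=\lambda t$ in their defining limits shows they are positively homogeneous of degree one. Consequently it suffices to prove $\bar q^+(\bar a)=\bar q^-(\bar a)$ for every $\bar a\in\Q^N$ with $|\bar a|<1$: $1$-Lipschitz continuity then forces the equality on a dense subset of $B$, hence by homogeneity on all of $\R^N$, and one sets $\bar q:=\bar q^+=\bar q^-$.

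Fix such an $\bar a$ and, for small $\delta>0$ and some $\ep_2\in(0,1-|\bar a|)$, run the random control construction of \eqref{eq:rc} with perturbation size $\ep_2$ and time scale $\delta$. Lemma~\ref{lem:Gamma} produces $\Gamma(\omega,\bar\omega)$ with $n^{-1}\theta(0,X^{0,0,\alpha(\bar\omega),\omega}_{\sigma_n(\bar\omega)},\omega)\to\Gamma$, Lemma~\ref{lem:limXovert} gives $|X^{0,0,\alpha(\bar\omega),\omega}_{\sigma_n}-\sigma_n\bar a|\le(C+o(1))\,\delta\,\sigma_n$ with $C$ depending only on $\|V\|_{{\mathcal C}^{1,1}}$, and the law of large numbers (using \eqref{eq:f}) gives $\sigma_n/n\to\delta$. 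Introducing a second, independent parameter $\ep_1>0$ in Theorem~\ref{reach} to compare the minimal time from $0$ to the points $X^{0,0,\alpha(\bar\omega),\omega}_{\sigma_n}$ and $\sigma_n\bar a$ (both of norm at most $M\sigma_n$, at distance at most $(C+o(1))\delta\sigma_n$ from each other), I would obtain, a.s.\ in $(\omega,\bar\omega)$,
$$\limsup_{n\to\infty}\ n^{-1}\bigl|\theta(0,X^{0,0,\alpha(\bar\omega),\omega}_{\sigma_n},\omega)-\theta(0,\sigma_n\bar a,\omega)\bigr|\ \le\ R,\qquad R:=\ep_1 M\delta+(1+\ep_1)C\delta^2,$$
so that $n^{-1}\theta(0,\sigma_n\bar a,\omega)$ has both its $\limsup$ and its $\liminf$ within $R$ of $\Gamma(\omega,\bar\omega)$ (in particular $\Gamma$ will cancel in the end).

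The remaining and most delicate step is to transfer this information from the random subsequence $t=\sigma_n(\bar\omega)$ to arbitrary $t\to+\infty$. Since $\sigma_{n+1}-\sigma_n\le2\delta$ and $t/n\to\delta$, Theorem~\ref{reach} with parameter $\ep_1$ bounds $|\theta(0,t\bar a,\omega)-\theta(0,\sigma_n\bar a,\omega)|$ by $T(\omega,\ep_1)+\ep_1 t|\bar a|+2\delta(1+\ep_1)|\bar a|$ for $\sigma_n\le t<\sigma_{n+1}$, which after division by $t$ contributes at most $\ep_1|\bar a|+o(1)$; combining this with the previous paragraph yields, a.s.,
$$\bar q^+(\bar a)-\bar q^-(\bar a)\ \le\ \frac{2R}{\delta}+2\ep_1|\bar a|\ =\ 2\ep_1\bigl(M+|\bar a|\bigr)+2(1+\ep_1)C\delta .$$
Because $\bar q^\pm(\bar a)$ are deterministic while this holds almost surely, the inequality is in fact deterministic; letting $\ep_1\downarrow0$ and then $\delta\downarrow0$ gives $\bar q^+(\bar a)=\bar q^-(\bar a)$, which is all that was needed. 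Taking $\Omega_0$ to be the full-probability set of Lemma~\ref{limsupliminf} intersected with the sets on which the invoked lemmas hold, one then has, on $\Omega_0$ and for every $v$, $\lim_{t\to+\infty}t^{-1}\theta(0,tv,\omega)=\bar q^+(v)=\bar q^-(v)=\bar q(v)$, with $\bar q$ positively homogeneous of degree one and $M^{-1}|v|\le\bar q(v)\le|v|$. The main obstacle is precisely this transfer: Kingman's theorem (via Lemma~\ref{lem:Gamma}) only delivers a limit along the $\bar\omega$-dependent times $\sigma_n$, so $\Gamma$ cannot be identified with $\bar q(\bar a)$ a priori; it is the linear growth of $\theta$ from Theorem~\ref{reach} together with the fact that $X^{\alpha}_{\sigma_n}$ follows the ray $\R\bar a$ up to an $O(\delta)$ error (Lemma~\ref{lem:limXovert}) that lets one absorb all the discrepancies into the free parameters $\ep_1$ and $\delta$.
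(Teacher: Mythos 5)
Your proof is correct and follows essentially the same route as the paper: compare $\theta(0,t\bar a,\omega)$ to $\theta(0,X^{\alpha}_{\sigma_n},\omega)$ via the reachability estimate of Theorem~\ref{reach}, use Lemma~\ref{lem:Gamma} to give the latter a limit, use Lemma~\ref{lem:limXovert} to control the discrepancy, and finally send the free parameters to $0$ and invoke Lemma~\ref{limsupliminf} to make the limit deterministic. The only (harmless) extra care you add is first proving $1$-Lipschitz continuity of $\bar q^{\pm}$ so as to reduce to rational directions $\bar a$, a countability point the paper glosses over; otherwise the decomposition, the key lemmata invoked, and the final $\ep,\delta$ bookkeeping coincide with the paper's.
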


\noindent {\bf Proof : }  Theorem \ref{reach} yields the existence, a.s. in $\omega\in \Omega$, of a constant $T(\omega, \ep)$ such that, for all $x,y\in \R^N$,
$$
\theta(x,y,\omega) \leq T(\omega,\ep)+\ep|x|+(1+\ep)|y-x|.
$$

To simplify the presentation, for the remainder of the proof we omit the dependence with respect to $(\omega,\bar \omega)$ and revert  to writing
$X^\alpha_t$ for  $X^{0,0,\alpha(\bar \omega),\omega}_t$. \\

Fix $\bar a \in B$. For $t>0$ large, let $n$ be such that ${\bf \sigma}_n\leq t < {\bf \sigma}_{n+1}$ and note that, since ${\bf \sigma}_n/n\to \delta$ a.s. in $\bar \omega$  as $n\to\infty$,
we also have $\lim_{n\to\infty}t/n\to \delta$ a.s. in $\bar \omega$. In addition, in view of the inequalities
\begin{align*}
\theta( 0, t\bar a) \leq & \;  \theta(0, X^\alpha_{{\bf \sigma}_n})
+ \theta( X^\alpha_{{\bf \sigma}_n}, t\bar a)\\
\leq & \;   \theta(0, X^\alpha_{{\bf \sigma}_n})
+ T(\omega,\ep)+ \ep|X^\alpha_{{\bf \sigma}_n}|+ (1+\ep)| X^\alpha_{{\bf \sigma}_n}-t\bar a|,
\end{align*}
using Lemma~\ref{lem:limXovert} and  Lemma~\ref{lem:Gamma}, we get
\begin{align*}
\limsup_{t\to+\infty}\frac{1}{t}  \theta( 0, t\bar a) \leq & \;
\limsup_{t\to +\infty} \frac{n}{t}\frac{1}{n}  \theta(0, X^\alpha_{{\bf \sigma}_n})
+ \ep(\|V\|_\infty+1)+ (1+\ep)\limsup_{t\to+\infty} \frac{1}{t} | X^\alpha_{{\bf \sigma}_n}-t\bar a| \\
\leq & \; \frac{1}{\delta}\Gamma
+ \ep(\|V\|_\infty+1)+ (1+\ep)C\delta,
\end{align*}
and, similarly,
\begin{align*}
\liminf_{t\to+\infty}\frac{1}{t}  \theta( 0, t\bar a) \geq & \;
\frac{1}{\delta}\Gamma
- \ep(\|V\|_\infty+1)- (1+\ep)C\delta\;.
\end{align*}

In particular we have
$$
\limsup_{t\to+\infty}\frac{1}{t}  \theta( 0, t\bar a)
- \liminf_{t\to+\infty}\frac{1}{t}  \theta( 0, t\bar a)
\leq
 2\ep(\|V\|_\infty+1)+ 2(1+\ep)C\delta\;,
 $$
which, since $\ep$ and $\delta$ are arbitrary, implies that $t^{-1}\theta(0,t \bar a,\omega)$ has, as $t\to\infty$, an a.s. limit $\overline q(\bar a)$. The fact that this limit is independent of $\omega$ has been proved in Lemma \ref{limsupliminf}.

It is also immediate that  $\bar q$ can be extended to a positively homogeneous of degree one map from $\R^N$ to $\overline \R_+$. To prove
(\ref{EstiBarq}), we first note that, in view  of the assumed bounds on $V$, we have, for all $t\geq 0$ and all controls $\alpha \in {\mathcal A}$,  $|X^{\alpha}_t|\leq M$. Hence, for all $v\in \R^N$ and $t\geq 0$,
$$
\theta(0, tv, \omega)\geq M^{-1} t|v| \;,
$$
and, therefore, $\bar q(v)\geq M^{-1}|v|.$ 

For the upper bound, we recall that  Theorem~\ref{reach} yields, a.s. in $\omega$ and for sufficiently small $\ep>0$, some positive $T(\omega,\ep)$, such that,
for all $v\in \R^N$ and $t\geq 0$,
$$
\theta(0, tv, \omega)\leq T(\omega,\ep)+(1+\ep)t|v|\;,
$$
and, hence, $\overline q(v) \leq (1+\ep)|v|$ for all $v\in\R^N$, which gives the claimed  upper bound since $\ep$ is arbitrary.
\QED

We are now in position to present the

\noindent {\bf Proof of Theorem \ref{thm:TimeConstant}: } Since all the statements below are true a.s. in $\omega$ throughout the proof we do not make any reference to this fact and, when we use an $\omega$, it is assumed that it belongs to the ``good'' subset of $\Omega$.

Fix $v\in \R^N\backslash \{0\}$. The definition of $\theta$ yields
$rv\in {\mathcal R}_{\theta(0,rv,\omega)}(0,\omega)$. Since
$\theta(0,rv,\omega)\to +\infty$ as $r\to+\infty$ and a.s., using Lemma~\ref{lem:timeconst}  and the convexity of ${\mathcal E}^-$, we find
$$
\frac{v}{\overline q(v)} = \lim_{r\to+\infty}\frac{rv}{\theta(0,rv,\omega)}
\in {\mathcal E}^- \ \ \text{ and } \ \
\{v \in \R^N : \bar q(v)\leq 1\} \subset {\mathcal E}^-\;.
$$

Conversely, if $z\in {\mathcal E}^+$, then there exists a sequence $t_n\to \infty$ and
$x_n\in {\mathcal R}_{t_n}(0,\omega)$ such that $x_n/t_n\to z$ and
$
\theta(0,x_n, \omega)\leq t_n\;.
$

Theorem \ref{reach} yields that,  for any $\ep>0$ and an appropriate $T(\ep,\omega)>0$,
$$
\theta(0,t_n z, \omega) \leq \theta(0,x_n, \omega)+\theta(x_n,t_nz,  \omega)
\leq t_n+ T(\ep,\omega)+\ep t_n|z| +(1+\ep)|x_n-t_nz|\;.
$$

Dividing by $t_n$ and letting first $n\to+\infty$ and then $\ep\to 0$ gives
$\bar q(z)\leq 1$, and, hence,
$$
{\mathcal E}^+\subset \{v\;;\; \bar q(v)\leq 1\}  \subset {\mathcal E}^-\;,
$$
and (\ref{E+E-}) holds.

Since $\overline q$ is positively homogeneous of degree $1$ and the set $ {\mathcal E}^-$ is convex, (\ref{E+E-}) implies that $\bar q$ is also convex.
Theorem \ref{reach} again yields (we do not repeat here the qualifiers) that, for any $v,v'\in \R^N$,
 $$
 \theta(0,tv, \omega)\leq  \theta(0,tv', \omega)+  \theta(tv',tv \omega)
\leq \theta(0,tv', \omega)+ T(\ep,\omega)+\ep t|v'|+(1+\ep)|v'-v|.
$$

The inequality above has the following two consequences. Firstly, the convergence of $\theta(0,tv,\omega)/t$ to $\overline q(v)$ is uniform with respect to $v$ for $|v|\leq R$. Secondly, after dividing by $t$ and passing to the limit $t\to\infty$, it follows that  $\overline  q$ is Lipschitz continuous with Lipschitz constant $1$.

We conclude showing (\ref{PointFinal}). We have just proved that, for each $\ep>0$ small, there exists a sufficiently large $T_\ep$ so that the set
$$
E_\ep= \left\{ \omega\in \Omega\;:\; \sup_{r\geq T_\ep, |v|\leq R} \left| \frac{\theta(0, rv,\omega)}{r}-\bar q(v)\right|\leq \ep
\right\}
$$
has a probability larger than $1-\ep$. 

In view of Lemma~\ref{ReachEom}, we know that there exist $k$ and a $\Omega_\ep\subseteq\Omega$ of full probability such that, for each $\omega\in\Omega_\ep$, there exists a positive constant $K(\omega,\ep)$ such that, for $t= K(\omega,\ep)+k\ep^{1/N} |x|$,
$$
{\mathcal R}_t(x,\omega)\cap E_\ep(\omega)\neq \emptyset ,
$$
where, as usual, $E_\ep(\omega)= \{x\in \R^N : \tau_x\omega\in {\mathcal E}_\ep\}\;.$

Fix $|v|, |x|\leq R$ and let $y\in {\mathcal R}_t(rx,\omega)\cap E_\ep(\omega)$ with $t$ as above. Then 
$$
\begin{array}{rl}
\theta(rx,r(x+v),\omega)
\leq &
\theta(y,r(x+v),\omega)
+\theta(rx,y,\omega)  \\
\leq & \theta(0,rx-y+rv,\tau_y \omega)
+T(\omega)+\ep r|x|+(1+\ep)|y-rx|\\
\leq & \theta(0,rv,\tau_y \omega)
+2T(\omega)+\ep rR+2(1+\ep)|y-rx|,
\end{array}
$$
where
$$
 |y-rx|\leq M t \leq MK(\omega)+ Mk\ep^{1/N}r |x|
 \leq  MK(\omega)+ MRk\ep^{1/N}r\;.
$$

Since $\tau_y\omega\in E_\ep(\omega)$ we have, for some positive constant $C(\omega)$ depending on $R$ but not on $x$ and $v$, that  
$$
\frac{\theta(rx,r(x+v),\omega) }{r}
\leq \bar q(v)+\ep + \frac{C(\omega)}{r} + C(\omega)\ep^{1/N},
$$
while a similar argument gives the reverse inequality, i.e,
$$
\frac{\theta(rx,r(x+v),\omega) }{r}
\geq \bar q(v)-\ep - \frac{C(\omega)}{r} - C(\omega)\ep^{1/N}\;.
$$

The conclusion, i.e., (\ref{PointFinal}), now follows after letting $\ep\to0$ providing we remove the $\ep$-dependence of the set $\Omega_\ep$ as in the proof of Theorem~\ref{reach} 
\QED

We now turn to the proof of  Lemma~\ref{lem:KakutaniErgo} which is rather involved. For the convenience of the reader we present first a heuristic argument. The main idea is that, since $E$ is invariant by $T^z$ for Q-almost every $z\in D$, the indicator function $u={\bf 1}_E$ must satisfy
\be\label{eq:Goal1}
0\geq \max_{i} \lg Du, V+ a_i\rg,
\ee
where
\be\label{Goal}
 \max_{i} \lg Du, V+ a_i\rg = \ep |Du|_\infty+ \lg V+\bar a, Du\rg .
 \ee

Since the vector field $V+\bar a$ is divergence free, then (\ref{eq:Goal1}) and \eqref{Goal} then imply
$$
0 \geq \ep \E[ |Du|_\infty]\;,
$$
which,  in turn, yields that $u$ is constant. Hence $\P[E]=0$ or $\P[E]=1$. Unfortunately the derivation of (\ref{eq:Goal1}) is not easy. The information we have is only a measure theoretic invariance for $E$ (recall $T^zE=E \ \P$  a.s. for almost all $z$), while the proof of an inequality like (\ref{eq:Goal1}) requires more ``point-wise'' information.  Getting around this difficulty is the main point of the proof and requires several steps.

Next we introduce some additional notation and summarize some facts that are used in the proof. To this end,
let $D_0\subset D$ be the subset of probability $1$ of $Q$ such that $E$ is invariant with respect to $T^z$ for all $z\in D_0$. Note that $D_0$ is dense in $D$. We restrict further $D_0$ by selecting in $D_0$ a countable dense sequence $D_1$. Recall that the invariance assumption means that
\be\label{PTEE}
\P[ T^z (E)\Delta E]=0\qquad \mbox{\rm for all $z\in D_1$.}
\ee

For any $\omega\in \Omega$ and any measurable set $S\subset \Omega$, we set as usual $S(\omega)=\{x\in \R^N : \tau_x\omega\in S\}$. Recall that, if $\P[S]=0$, then a simple application of Fubini's theorem yields the set of $\omega$'s, such that $S(\omega)$ has zero Lebesgue measure in $\R^N$, has probability $1$. So, in view of (\ref{PTEE}) and the ergodic theorem, there exists $\Omega_0\subseteq\Omega$ of full measure such that, for all $\omega\in \Omega_0$ and $z\in D_1$,
\be\label{ChoiceOmega}
\lim_{R\to\infty} \frac{|B_R\cap E(\omega)|}{|B_R|}=\P(E)\qquad{\rm and }\qquad \left| (T^z(E)\Delta E)(\omega)\right|=0.
\ee

Let $A=\{v\in\R^N : |v-\bar a|_\infty\leq \ep\}$ -- note that $A$ is the convex envelope of the set $\{a_k: k=1,\dots, 2N\}$, 
and denote by ${\mathcal A}_0\subset{\mathcal A}$ the set of time-measurable controls $\alpha: [0,+\infty)\to A$. Given $\alpha\in {\mathcal A}_0$, we denote by $X^\alpha_t$ the map $x\to X^{0,x,\alpha,\omega}_t$ and recall that, for each fixed $t$, it is a bi-Lipschitz continuous, and, hence, a bi-measurable, bijection from $\R^N$ to $\R^N$.\\

We are now ready for the

\noindent {\bf Proof of Lemma \ref{lem:KakutaniErgo}: } In view of the previous discussion we assume \eqref{PTEE} and $\P[E]>0$ and we show that $\P[E]=1$.
For the convenience of the reader we divide the proof in four steps.

{\bf Step 1:} We claim that, for all $\omega\in\Omega_0, t\in (0,\infty)$ and $\alpha\in {\mathcal A}_0$
\be\label{step1}
\left| X^{\alpha}_t(E(\omega))\Delta E(\omega)\right|=0.
\ee

If there is some $k\in\{1,\dots,2N\}$ such that $\alpha=a_k$ on $[0,t]$ and $z=(t,a_k)\in D_1$, then (\ref{step1}) holds  because, by the definition of $T^z$,
$$
\left|X^{\alpha}_t(E(\omega))\Delta E(\omega)\right|= \left|(T^zE\cap E)(\omega)\right|=0\;,
$$
the last equality coming from the choice of $\omega$ in (\ref{ChoiceOmega}).
By induction, (\ref{step1}) also holds  if there exist  sequences of times
$0=t_0<t_1<\dots< t_n=t$ and integers $k_1, k_2, \dots, k_n\in \{1, \dots, 2N\}$ with $(t_{i+1}-t_i, k_i)\in D_1$ such that  $\alpha= a_{k_i}$ on $[t_i,t_{i+1})$. For future reference we name such a control a simple control and say that $t$ is its associated final time.

Next we fix $\alpha\in {\mathcal A}_0$ and $t>0$. Since $D_1$ is dense in $D$, there exists a sequence of simple controls $(\alpha_n)_{n\in\N}$ with associated final times $t_n$ such that, as $n\to\infty$,  $\alpha_n \to \alpha$ in $L^\infty_{loc}$-weak-$\ast$ and $t_n\to t$. The goal  is to pass to the limit in the equality $\left| X^{\alpha_n}_{t_n}(E(\omega))\Delta E(\omega)\right|=0$.  Note that, in view of the weak convergence of the $\alpha_n$'s to $\alpha$, the map $x\to X^{\alpha_n}_{t_n}(x)$ converges locally uniformly to the map $x\to X^{\alpha}_{t}(x)$. We denote by $Y^{\alpha_n}_{t_n}$ and $Y^{\alpha}_{t}$ the inverse maps of $X^{\alpha_n}_{t_n}$ and $X^{\alpha}_{t}$ respectively. For any sufficiently large $R>0$ and sufficiently small $\eta>0$, let $\phi:\R^N\to [0,1]$ be a smooth map  with compact support in $B_R$ such that $\|{\bf 1}_{E(\omega)\cap B_R}-\phi\|_{L^1(\R^N)}\leq \eta$. Since $Y^{\alpha_n}_{t_n}$ and $Y^{\alpha}_{t}$ preserve the measure in $\R^N$, we also have
\be\label{coconuts}
\|{\bf 1}_{E(\omega)\cap B_R}\circ Y^{\alpha_n}_{t_n}-\phi\circ Y^{\alpha_n}_{t_n}\|_{L^1(\R^N)}\leq \eta\; {\rm  and }\;
\|{\bf 1}_{E(\omega)\cap B_R}\circ Y^{\alpha}_{t}-\phi\circ Y^{\alpha}_{t}\|_{L^1(\R^N)}\leq \eta\;.
\ee

In view of the local uniform convergence of the $X^{\alpha_n}_{t_n}$'s to $X^{\alpha}_{t}$ and the continuity of $\phi$, for $n\geq n_0$, where $n_0$ is sufficiently large,  we have $\| \phi\circ Y^{\alpha_n}_{t_n}- \phi\circ Y^{\alpha}_{t}\|_1\leq \eta$. This inequality combined with (\ref{coconuts}) implies that, for all $ n\geq n_0$,
 $$
 \|{\bf 1}_{E(\omega)\cap B_R}\circ Y^{\alpha_n}_{t_n}-{\bf 1}_{E(\omega)\cap B_R}\circ Y^{\alpha}_{t}\|_{L^1(\R^N)}\leq 3\eta.
 $$

Since $${\bf 1}_{E(\omega)\cap B_R}\circ Y^{\alpha_n}_{t_n}={\bf 1}_{X^{\alpha_n}_{t_n}(E(\omega)\cap B_R)} \ \text{ and } \
 {\bf 1}_{E(\omega)\cap B_R}\circ Y^{\alpha}_{t}={\bf 1}_{X^{\alpha}_{t}(E(\omega)\cap B_R)},$$
 we have shown that
\be\label{toto1}
\lim_{n\to +\infty} \|{\bf 1}_{X^{\alpha_n}_{t_n}(E(\omega)\cap B_R)}-{\bf 1}_{X^{\alpha}_{t}(E(\omega)\cap B_R)}\|_{L^1(\R^N)}= 0\;.
\ee

Recalling that $X^{\alpha_n}_{t_n}$ is a bijection and $\left| X^{\alpha_n}_{t_n}(E(\omega))\Delta E(\omega)\right|=0$, we also have, that, as $n\to\infty$,
$$
{\bf 1}_{X^{\alpha_n}_{t_n}(E(\omega)\cap B_R)} = {\bf 1}_{X^{\alpha_n}_{t_n}(E(\omega))\cap X^{\alpha_n}_{t_n}( B_R)}
= {\bf 1}_{E(\omega)\cap X^{\alpha_n}_{t_n}( B_R)} \to {\bf 1}_{E(\omega)\cap X^{\alpha}_{t}( B_R)} \; {\rm in } \; L^1(\R^N)\;.
$$

Combining (\ref{toto1})
and the claim above implies that
$${\bf 1}_{X^{\alpha}_{t}(E(\omega)\cap B_R)}=  {\bf 1}_{E(\omega)\cap X^{\alpha}_{t}( B_R)} \ \ \text{a.e. in} \ \  \R^N$$
and, hence, as  $R\to+\infty$, (\ref{step1}). \\

{\bf Step 2:} Let $\hat E(\omega)$ be the set of positive density points of $E(\omega)$, i.e.,
$$
\hat E(\omega)=\left\{ x\in \R^N\;: \; \liminf_{r\to 0^+} \frac{| B(x,r)\cap E(\omega)|}{|B(r)|} >0\right\}\;,
$$
and recall that, in view of the assumption at the beginning of the proof, we have $\hat E(\omega)=E(\omega)$ a.e. in $\R^N$.
We claim that $\hat E(\omega)$ is invariant under the action of $X^\alpha_t$ for any $\alpha\in{\mathcal A}_0$ and any $t\geq0$, i.e.,
\be\label{step2}
{\rm if }\; x\in \hat E(\omega), \quad {\rm then} \quad X^{\alpha}_t(x)\in \hat E(\omega)\;.
\ee
Note the difference with (\ref{step1}). Here we have a point-wise statement. \\

Since $X^\alpha_t$ is a bi-Lipschitz bijection of $\R^N$ with  Lipschitz constant $L$, for any $x\in \R^N$ and $\ep>0$, we have
$$
X^\alpha_t(B(x,\ep))\subset B(X^\alpha_t(x), L\ep).\;
$$

Let  $x\in  \hat E(\omega)$ and set $y= X^\alpha_t(x)$. Then, for any $r>0$,
$$
| E(\omega)\cap B(y,r)|\geq  | X^\alpha_t (E(\omega))\cap X^\alpha_t(B(x,r/L)|
=  | X^\alpha_t (E(\omega)\cap B(x,r/L))| = |E(\omega)\cap B(x,r/L)|,
$$
where the last equality holds because $X^\alpha_t$ preserves the measure.

Since $x$ is a point of positive density of $E(\omega)$, we have
$$
\liminf_{r\to 0^+} \frac{| E(\omega)\cap B(y,r)|}{|B(r)|} \geq
\liminf_{r\to 0^+} \frac{ |E(\omega)\cap B(x,r/L)|}{|B(r)|} \geq \frac{1}{L^N} \liminf_{r\to 0^+} \frac{ |E(\omega)\cap B(x,r)|}{|B(r)|}>0\;,
$$
and, hence, $y$ is also a point of positive density for $E(\omega)$, i.e., (\ref{step2}) holds. \\

{\bf Step 3: } The set $\hat E(\omega)$ coincides a.e. in $\R^N$ with its topological closure $\overline{\hat E(\omega)}$, which is invariant under that action of  $X^\alpha_t$ for any $t>0$ and $\alpha\in {\mathcal A}_0$, i.e., for all $t>0$ and $\alpha\in {\mathcal A}_0$,
\be\label{step3}
{\rm if } \ \  x\in \overline{\hat E(\omega)}, \quad {\rm then} \quad X^{\alpha}_t(x)\in \overline{\hat E(\omega)}. 
\ee

Following the arguments of the proof of Lemma~\ref{lem:cone}, there exists a sufficiently small $\eta>0$ such that, for all $x\in \R^N$, the cone
${\mathcal C}_\eta(x)=x+ (0,\eta)[V(x,\omega)+ B(\bar a, \eta)]$ is contained in $\{ X^\alpha_s(x)\; : \; \alpha\in {\mathcal A}_0, \; s\in (0,h)\}$.
If, furthermore,  $x\in \hat E(\omega)$, then Step 2 implies that the set in the right-hand  side of the above inclusion is also contained in $\hat E(\omega)$. Hence, for all $x\in B(0,R)\cap \hat E(\omega)$,
$$
{\mathcal C}_\eta(x) \subset \hat E(\omega). 
$$
This regularity property easily implies that $|\overline{\hat E(\omega)}\backslash \hat E(\omega)|=0$. Indeed, otherwise, there would exist a Lebesgue point $x$ for the set  $\overline{\hat E(\omega)}\backslash \hat E(\omega)$, i.e., for some $x$ we would have
$$\lim_{r\to 0^+} \frac{|(\overline{\hat E(\omega)}\backslash \hat E(\omega))\cap B(x,r)|}{|B(r)|}= 1\;.$$

Let  $(x_n)_{n\in\N}$ be a sequence in $\hat E(\omega)$ converging to $x$. Then, since ${\mathcal C}_\eta(x_n)\subset  \hat E(\omega)$, we must have
$$
|(\overline{\hat E(\omega)}\backslash \hat E(\omega))\cap B(x,r)|\leq
| B(x,r)\backslash {\mathcal C}_\eta(x_n)| \;.
$$

Letting $n\to\infty$ in the above inequality yields
$$
|(\overline{\hat E(\omega)}\backslash \hat E(\omega))\cap B(x,r)|\leq | B(x,r)\backslash \overline{ {\mathcal C}_\eta(x)} |\;,
$$
and, hence,
$$
1= \lim_{r\to 0^+} \frac{|(\overline{\hat E(\omega)}\backslash \hat E(\omega))\cap B(x,r)|}{|B(r)|}\leq
\limsup_{r\to 0^+} \frac{| B(x,r)\backslash \overline{ {\mathcal C}_\eta(x)}|}{|B(r)|} <1\;,
$$
which is a contradiction. Therefore we must have $|\overline{\hat E(\omega)}\backslash \hat E(\omega)|=0$.

The fact that (\ref{step3}) holds is a straightforward consequence of the same property for $\hat E(\omega)$ proved in Step 2. \\

{\bf Step 4 : } We now complete the proof of the Lemma. It is easily checked that, if $u$ is the indicator function of $\R^N\backslash\overline{\hat E(\omega)}$, then $u$ is a stationary viscosity supersolution to
$$
0 \geq \max_i \{ \lg Du, V+a_i\rg \}\;,
$$
where
$$
\max_i \{ \lg Du, V+a_i\rg \}= \ep  |Du|_\infty + \lg V+\bar a , Du\rg \geq  (\ep/\sqrt{2}) |Du| + \lg V+\bar a , Du\rg\;.
$$

Hence $u$ satisfies, in the viscosity sense,
$$
0\geq |Du|+ \lg \frac{V+\bar a}{(\ep/\sqrt{2}} , Du\rg
$$
where  $(\ep/\sqrt{2})^{-1}{V+\bar a}$  is a stationary, divergence free vector field. In view of Lemma \ref{SubSolOmega}, this implies that $u$ is constant.

So $E(\omega)= \overline{\hat E(\omega)}=\R^N$ a.e. in $\R^N$, and, hence, $\P(E)=1$. \QED

\section{Enhancement of the velocity}\label{sec:enhancement}

In the previous section we proved that the averaged front is governed by a Hamilton-Jacobi equation with a Hamiltonian $\overline H$ satisfying, for all $p\in\R^N$, $\overline H(p)\geq |p|+\lg \E[V],p\rg$.
Here we prove Theorem \ref{theo:hen1} which yields that this inequality is actually strict in all directions ``seen'' by the vector field $V$.
In other words, the presence of the vector field $V$ enhances the speed.

In view of the reduction argument given in the proof of Theorem \ref{theo:hom1}, we may assume (\ref{eq:mz}) throughout this section. 

Before we enter in the proof of Theorem~\ref{theo:hen1}, we present a formal argument that yields the enhancement and  motivates the several technical steps of the rigorous proof.

To this end, for simplicity we take $p=e_1$, assume that $\overline H(e_1)=1$, i.e., that there is no enhancement in the $e_1$ direction, and show (formally) that $V_1=0$.

Assume next that the ``cell problem''
\be\label{eq:cell}
|Dw + e_1| + \lg V(y,\omega), Dw + e_1 \rg = 1  \ \ \text{ in \ $\R^N$  \ and \ a.s. in \ $\omega$},
\ee
has an a.s. Lipschitz continuous solution $w$ with $Dw$ stationary and of mean $0$ and, in addition,  that \eqref{eq:cell} holds a.s. in $\omega$ at $y=0$. Note that the existence of such $w$ is a big assumption. Indeed most probably the claim does not hold even in the periodic setting.


Averaging over $\omega$ and using the properties of $V$ yields
\be\label{eq:cor1}
\E[|Dw + e_1|]\leq 1.
\ee

Employing the elementary fact that
\be\label{eq:in}
(a^2 +b^2)^{1/2} \geq (1-\ep)^{1/2} a + \ep^{1/2} b \ \ \text{ for all $a,b \in \R_+$ and $\ep>0$}
\ee
as well as Jensen's inequality in \eqref{eq:cor1} yields that
$$
1\geq (1-\ep)^{1/2} \E[|\partial_1w+ 1|] + \ep^{1/2} \E[|\hat D  w|].
$$
where $\hat D w=(\partial_{x_2}w, \dots, \partial_{x_N}w)$.
Hence,
$$
\E[|\hat D  w|] =0 \ \ \text{ and } \ \ \E[(\partial_1w+ 1)_-]=0,
$$
and, therefore,
\be\label{eq:cor2}
\hat D  w =0 \ \ \text{ and } \ \ \partial_1w + 1 \geq0 \ \ {a.s. \ in} \ \omega.
\ee

In view of \eqref{eq:cor2}, it follows from \eqref{eq:cell} that
$$
(1+V_1)(\partial_1w +1) = 1 \ \ \text{ a.s. in \ $\omega$},
$$
and, hence,
\be\label{eq:cor3}
V_1 + 1\geq 0 \ \ \text{ a.s. in \ $\omega$}.
\ee

Moreover,
$$
\partial_1 w = - \frac {V_1}{1 +V_1} \ \  \text{ a.s. in \ $\omega$},
$$
and
$$
\E[\partial_1 w]=-\E\left[\frac {V_1}{1 +V_1}\right]=0.
$$

Using again Jensen's inequality (more details are given in the course of the rigorous proof)
leads to $V_1=0$.

The rest of the section is devoted to the proof of Theorem~\ref{theo:hen1}. Since we do not know that correctors, i.e., solutions of \eqref{eq:cell}
with the properties listed above, exist for all $p$, we need to work with the solutions $v_\delta^p$ of
\be\label{eq:deltavdelta}
\delta v^p_\delta = \left| Dv^p_\delta+p\right| + \lg Dv^p_\delta+p, V\rg \ \  {\rm in } \ \  \R^N,
\ee
which, for $\delta>0$ and $p\in\R^N$, (see, for example, \cite{cil}), are a.s. in $\omega$, unique, bounded, continuous and stationary.

As explained in the introduction,  in the periodic setting it is possible to obtain, using the isoperimetric inequality, uniform in $\delta$, bounds for the oscillation of $v^p_\delta$. This in turn implies that the family $(\delta v^p_\delta)_{\delta>0}$ converges, as $\delta\to0$, uniformly in $\R^N$ to a constant and, hence, homogenization takes place. The convergence of the  $\delta v^p_\delta$'s was then used in \cite{cns} to show the enhancement.

In our setting, such estimates were not available and we followed a different approach to prove that the $G$-equation homogenizes. Having
established this fact, we may now go back and ascertain that the $\delta v^p_\delta$'s converge, as $\delta\to0$, in the appropriate sense to $\overline H(p)$. The exact statement is in the next lemma, which we state without proof. For the latter we refer to \cite{LS} and \cite{AS}.

We have:

\begin{lem} For each $p\in \R^N$ and $R>0$, as $\delta\to0$, $\delta v^p_\delta \to \overline H(p)$ uniformly in balls $B(y/\delta,R/\delta)$ for $y$ bounded and a.s. in $\omega$.
\end{lem}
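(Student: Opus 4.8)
The plan is to deduce the convergence of the discounted correctors $v^p_\delta$ from the homogenization already proved, through the standard equivalence between homogenization of the evolution equation and convergence of the resolvent (\cite{LS,AS}); the only delicate point is the absence of a bound on the oscillation of $v^p_\delta$, so the argument must be routed through the homogenization itself, which is exactly what confines the uniformity to balls of radius $O(1/\delta)$. Recall from the proof of Theorem~\ref{theo:hom1} that, a.s.\ in $\omega$ and for every $R>0$, $t^{-1}z^p(\cdot,t,\omega)\to\overline H(p)$ uniformly on $B_{Rt}$, where $z^p$ solves \eqref{eq:zP}; moreover, by \cite{cil} and the control formula for \eqref{eq:uep} recalled in Section~\ref{sec:review}, one has the representations
\[
z^p(x,t,\omega)=\sup_{\alpha\in\mathcal A}\lg p,\,X^{x,0,\alpha,\omega}_t-x\rg,\qquad v^p_\delta(x,\omega)=\sup_{\alpha\in\mathcal A}\ \delta\int_0^{+\infty}e^{-\delta s}\lg p,\,X^{x,0,\alpha,\omega}_s-x\rg\,ds,
\]
together with the elementary bound $|X^{x,0,\alpha,\omega}_s-x|\le Ms$. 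Since every ball $B(y/\delta,R/\delta)$ with $|y|$ bounded is contained in $\{|x|\le C/\delta\}$ for a suitable $C$, it suffices to show that, a.s.\ in $\omega$, $\sup_{|x|\le C/\delta}|\delta v^p_\delta(x,\omega)-\overline H(p)|\to0$ as $\delta\to0$, for each $C>0$; we may assume $p\neq0$, so that $\overline H(p)\ge|p|>0$.

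\emph{Upper bound.} Bounding $\lg p,X^{x,0,\alpha,\omega}_s-x\rg\le z^p(x,s,\omega)$ inside the integral and rescaling $\sigma=\delta s$ gives $\delta v^p_\delta(x,\omega)\le\int_0^{+\infty}e^{-\sigma}\sigma\,(\sigma/\delta)^{-1}z^p(x,\sigma/\delta,\omega)\,d\sigma$. For each fixed $\sigma>0$ and $|x|\le C/\delta$ one has $x\in B_{(C/\sigma)(\sigma/\delta)}$, so the homogenization yields $(\sigma/\delta)^{-1}z^p(x,\sigma/\delta,\omega)\to\overline H(p)$ as $\delta\to0$, uniformly in such $x$; since the integrand is dominated by $M|p|\,\sigma e^{-\sigma}$, which also absorbs the small-$\sigma$ range where this uniformity degenerates, a uniform dominated convergence argument gives $\limsup_{\delta\to0}\sup_{|x|\le C/\delta}\delta v^p_\delta(x,\omega)\le\overline H(p)\int_0^{+\infty}\sigma e^{-\sigma}\,d\sigma=\overline H(p)$.

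\emph{Lower bound.} This is the main obstacle: a single near-optimal control for the evolution gives no information on $\lg p,X_s-x\rg$ at intermediate times, so one must enforce steady progress by a block construction. Fix $\eta>0$, then $A$ large, then $k\in\N$ large, and set the block length $T=A/(k\delta)$, so that $kT=A/\delta$ covers the effective horizon of the weight $\delta e^{-\delta s}$. Build $\alpha\in\mathcal A$ on $[0,kT]$ inductively, choosing on each block $[jT,(j+1)T]$ a control that is $\eta T$-optimal for $z^p$ started from the current endpoint $y_j=X^{x,0,\alpha,\omega}_{jT}$; since $|y_{j+1}|\le|y_j|+MT$ and $|y_0|=|x|\le C/\delta$, all the $y_j$ remain in $B_{\rho T}$ with $\rho=k(C+MA)/A$, so the homogenization (uniform on $B_{\rho T}$, hence applicable once $\delta$ is small enough, depending on $\eta,A,k,C,\omega$) gives $\lg p,\,y_{j+1}-y_j\rg\ge(\overline H(p)-2\eta)T$ for every $j$. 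Interpolating with $|X_s-x|\le Ms$ inside each block (here $\overline H(p)-2\eta\ge0$ is used) and extending $\alpha$ past $kT$ by a constant control, one gets $\lg p,X^{x,0,\alpha,\omega}_s-x\rg\ge(\overline H(p)-2\eta)s-C'T$ for $0\le s\le A/\delta$, with $C'\le2M|p|$, and $\ge-M|p|s$ for $s\ge A/\delta$. Inserting this into the representation of $v^p_\delta$, rescaling, and using $\delta C'T=C'A/k$, $\int_0^A\sigma e^{-\sigma}d\sigma\to1$ and $\int_A^{+\infty}\sigma e^{-\sigma}d\sigma\to0$, I obtain
\[
\liminf_{\delta\to0}\ \inf_{|x|\le C/\delta}\delta v^p_\delta(x,\omega)\ \ge\ (\overline H(p)-2\eta)\bigl(1-(1+A)e^{-A}\bigr)-\frac{C'A}{k}-M|p|(1+A)e^{-A}.
\]
Since the left-hand side does not depend on $k,A,\eta$, letting $k\to+\infty$, then $A\to+\infty$, then $\eta\to0$ in the right-hand side yields $\liminf_{\delta\to0}\inf_{|x|\le C/\delta}\delta v^p_\delta(x,\omega)\ge\overline H(p)$.

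Combining the two bounds proves the statement for each fixed $p$ and a.s.\ $\omega$; a single full-measure set valid for all $p$ is obtained either directly from the ``for all $P$'' form of the homogenization used in the proof of Theorem~\ref{theo:hom1} or, failing that, from the $\delta$-uniform estimate $|v^p_\delta-v^{p'}_\delta|\le M\delta^{-1}|p-p'|$ (a comparison bound for \eqref{eq:deltavdelta}) together with the Lipschitz continuity of $\overline H$ and density. As stressed above, the crux is the block construction: the horizon $T$ must be large, so that the homogenization has already taken effect on the dilated balls $B_{\rho T}$, yet small compared with $A/\delta$, so that the within-block jitter $C'T$ is negligible after integration against $\delta e^{-\delta s}$; this forces $k$ large and $\delta\to0$ to be sent to zero last, and is exactly the reason why the conclusion cannot hold globally in $x$ but only on balls of radius $O(1/\delta)$ --- a reflection of the noncoercivity and of the locality of the reachability estimate of Theorem~\ref{reach}.
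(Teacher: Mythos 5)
Your proof is correct. In fact the paper gives no proof of its own here---it explicitly defers to \cite{LS} and \cite{AS}---so a direct comparison is not possible, but your argument is a valid, self-contained derivation of the lemma from the evolution homogenization already established. The strategy is a standard Abelian/Tauberian passage from Ces\`aro-type convergence (the $t^{-1}z^p(\cdot,t)\to\overline H(p)$ statement on balls $B_{Rt}$, which the paper asserts as equivalent to homogenization and uses in the proof of Theorem~\ref{theo:hom1}) to resolvent convergence of $\delta v^p_\delta$. The control representations of $z^p$ and $v^p_\delta$, which you verify by the control formula for \eqref{eq:uep} and an integration by parts in $s$, are the right link: the upper bound is then a clean dominated-convergence argument after the rescaling $\sigma=\delta s$, with the pointwise-in-$\sigma$ convergence of $\sup_{|x|\le C/\delta}(\sigma/\delta)^{-1}z^p(x,\sigma/\delta)$ to $\overline H(p)$ justified because for each fixed $\sigma$ one may take $R=C/\sigma$ in the $B_{Rt}$ statement. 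The lower bound requires genuine work because a near-optimal control for the evolution at one horizon says nothing about intermediate times; your block construction with $T=A/(k\delta)$ and the three-parameter limit $k\to\infty$, $A\to\infty$, $\eta\to0$ (taken \emph{after} $\delta\to0$) is exactly the device that works, and the bookkeeping ($|y_j|\le\rho T$ with $\rho=k(C+MA)/A$, the within-block error $C'T$ producing $C'A/k$ after integration, the tail contribution $M|p|(1+A)e^{-A}$) checks out, as does the Lipschitz-in-$p$ comparison bound $|v^p_\delta-v^{p'}_\delta|\le M\delta^{-1}|p-p'|$ used to pass from a countable dense set of $p$ to all $p$ on a single full-measure $\Omega_0$. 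The one presentational remark I would make is that the bound $\lg p,X_s-x\rg\ge-M|p|s$ holds for any admissible trajectory, so the phrase ``extending $\alpha$ past $kT$ by a constant control'' is unnecessary; the crude bound applies directly.
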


We continue with the

\noindent {\bf Proof of Theorem~\ref{theo:hen1}: }

First assume that $\lg V(\cdot,\omega), p\rg=0$ in $\R^N$ and a.s. in $\omega$. Then $v^p_\delta(x,\omega)= \frac{|p|}{\delta}$ is the unique solution to (\ref{eq:deltavdelta}), and, hence, $\bar H(p)= 1$.

For the rest of  the proof we assume, for simplicity,  that $p=e_1$ and we denote by $v_\delta$ the solution of (\ref{eq:deltavdelta}) with $p=e_1$,
i.e., $v_\delta=v_\delta^{e_1}$. The result will come from the analysis of the behavior of the $\delta v_\delta$'s as $\delta\to 0$. In order to avoid measurability issues, we will always work up to subsequences as $\delta\to 0$ (although we will not write it explicitly for notations sake).

Next we assume that  $\overline H(e_1)=1$ and we show that $V_1=0$ a.s. in $\omega$.
The proof follows the formal arguments presented earlier. But making these rigorous is rather involved. For the arguments below it is necessary to use  that (\ref{eq:deltavdelta}) holds in the a.e. sense (viscosity is not enough),  and to let $\delta\to0$. Due to the lack of estimates we need to regularize the $v_\delta$'s and the sup-convolution (see \eqref{eq:conv}) is the right way to do it. To this end,  for $\eta>0$ small, let
$$
\tilde v_{\delta,\eta}(x,\omega)= \sup_{y\in \R^N} \left\{ v_\delta(y,\omega)- (2\eta)^{-1}|x-y|^2\right\}\;.
$$
It follows that the $\tilde v_{\delta,\eta}$'s are Lipschitz continuous, stationary and, as $\eta\to 0$, $\tilde v_{\delta,\eta}\to v_\delta$ and
$\delta\tilde v_{\delta,\eta}\to \delta v_\delta$ locally uniformly in $\R^n$ and a.s. in $\omega$. Notice that the  $\delta \tilde v_{\delta,\eta}$'s are uniformly bounded uniformly in $\eta$. Therefore, the convergence as $\eta\to 0$, of the $\delta \tilde v_{\delta,\eta}$'s to $\delta v_\delta$ also holds in $L^1(\Omega)$. Hence, we can choose $\eta>0$ sufficiently small so that, if $\|f\|_1$ denotes the $L^1(\Omega)$-norm of $f:\Omega\to\R$,
$$\|\delta \tilde v_{\delta,\eta}-\delta v_\delta\|_1\leq \|\delta v_\delta-1\|_1\;.$$
Next we use the fact that the sup of solutions of Hamilton-Jacobi equations  with concave Hamiltonians is still a solution (of a modified equation) (see Barron-Jensen \cite{bj90}, Barles \cite{Ba93}). Using the argument in the proof of Lemma~\ref{SubSolOmega} we can find $\gamma_{\delta,\eta}>0$ such that $\tilde v_{\delta,\eta}$ satisfies, a.e. in $\R^N$ and a.s. in $\omega$,
$$
\delta \tilde v_{\delta,\eta} \geq  (1-\gamma_{\delta,\eta} ) \left| D\tilde v_{\delta,\eta} +e_1\right| + \lg D\tilde v_{\delta,\eta} +e_1, V\rg,
$$
and
$$
\delta \tilde v_{\delta,\eta} \leq  (1+\gamma_{\delta,\eta} ) \left| D\tilde v_{\delta,\eta} +e_1\right| + \lg D\tilde v_{\delta,\eta} +e_1, V\rg .
$$
Note also that, since $\gamma_{\delta,\eta}\to 0$ as $\eta\to 0$, we can choose $\eta=\eta(\delta)$ such that, as $\delta\to 0$, $\gamma_{\delta,\eta(\delta)}\to0$. 

From now on, in order to simplify the presentation, we omit the dependence of the various quantities on $\eta$ and we set $\tilde v_\delta=
\tilde v_{\delta,\eta(\delta)}$ and $\gamma_\delta= \gamma_{\delta,\eta(\delta)}$.

For later use  we rewrite the previous observations with the new notation, i.e., as $\delta\to0$,  $\gamma_\delta\to 0$, while $\tilde v_\delta$ satisfies
a.e. in $\R^N$ and a.s. in $\omega$,
\be\label{ineq1}
\delta \tilde v_\delta \geq  (1-\gamma_{\delta} ) \left| D\tilde v_{\delta} +e_1\right| + \lg D\tilde v_{\delta} +e_1, V\rg,
\ee
and
\be\label{ineq2}
\delta \tilde v_\delta \leq  (1+\gamma_{\delta} ) \left| D\tilde v_{\delta} +e_1\right| + \lg D\tilde v_{\delta} +e_1, V\rg.
\ee

A simple Fubini Theorem-type argument also yields that we may assume that  \eqref{ineq1} and \eqref{ineq2} hold a.s. in $\omega$ for some (independent of $\omega$) $x_0$. For notational simplicity below we take $x_0=0$ and assume, although we do not write it explicitly, that  \eqref{ineq1} and \eqref{ineq2}
hold for $x=0$ and a.s. in $\omega$. Finally recall that, since $\tilde v$ and its gradient are both stationary functions, any integral norm in $\omega$ is independent of where the function is evaluated in space. Hence below, when we write $L^1$-norms, we omit this dependence.


The proof is based on the three lemmata which we state next. We present their proofs after the end of the ongoing one.

We have:

\begin{lem}\label{lem:1} As $\delta \to 0$,  $\|\hat D \tilde v_\delta\|_1\to 0$ and $\|\left(\partial_{x_1} \tilde v_\delta+1\right)_-\|_1\to 0$. 
\end{lem}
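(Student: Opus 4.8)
The plan is to mimic the formal argument from the start of the section, replacing the (nonexistent) corrector $w$ by the regularized approximations $\tilde v_\delta$ and controlling the error terms as $\delta\to0$. First I would average the subsolution inequality \eqref{ineq1} over a large ball. Since $V$ is divergence free and $D\tilde v_\delta$ is stationary and bounded, integrating \eqref{ineq1} over $B(0,R)$ gives $\int_{B(0,R)}\langle V,D\tilde v_\delta+e_1\rangle = \int_{\partial B(0,R)}\langle V,\nu\rangle(\langle \tilde v_\delta,\cdot\rangle)\,\dots$, which is $O(R^{N-1})$ because $\tilde v_\delta$ has bounded oscillation on fixed balls; dividing by $|B(0,R)|$ and letting $R\to\infty$, the ergodic theorem yields $\delta\,\E[\tilde v_\delta]\ge(1-\gamma_\delta)\E[|D\tilde v_\delta+e_1|]+\langle\E[V],\dots\rangle$. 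Actually I must be a bit careful: $\langle D\tilde v_\delta, \E[V]\rangle$ need not vanish in expectation unless I integrate by parts correctly, but since $\E[V]=0$ and $V$ is divergence free, the $V$-term averages to $0$. Thus, using that $\delta\tilde v_\delta\to\overline H(e_1)=1$ in $L^1(\Omega)$ (from the unnumbered lemma just before the statement, i.e.\ the $\delta v_\delta^p\to\overline H(p)$ convergence, combined with $\|\delta\tilde v_\delta-\delta v_\delta\|_1\le\|\delta v_\delta-1\|_1\to0$) and $\gamma_\delta\to0$, I obtain in the limit
\be\label{plan:cor1}
\limsup_{\delta\to0}\E[|D\tilde v_\delta+e_1|]\le 1.
\ee

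Next I would split the Euclidean norm coordinate-wise. Write $|D\tilde v_\delta+e_1|=\bigl((\partial_1\tilde v_\delta+1)^2+|\hat D\tilde v_\delta|^2\bigr)^{1/2}$ and apply the elementary inequality \eqref{eq:in} with $a=|\partial_1\tilde v_\delta+1|$ and $b=|\hat D\tilde v_\delta|$ (for a fixed small $\ep>0$), getting
$$
|D\tilde v_\delta+e_1|\ge (1-\ep)^{1/2}|\partial_1\tilde v_\delta+1|+\ep^{1/2}|\hat D\tilde v_\delta|\ge(1-\ep)^{1/2}(\partial_1\tilde v_\delta+1)+\ep^{1/2}|\hat D\tilde v_\delta|.
$$
Taking expectations and using \eqref{plan:cor1} together with $\E[\partial_1\tilde v_\delta]\to0$ (stationarity plus a standard argument that the expected gradient of a stationary Lipschitz function is the gradient of its spatial average, hence $0$; this needs the uniform-in-$\delta$ Lipschitz bound coming from the sup-convolution at scale $\eta(\delta)$, which is where one has to be slightly cautious since that bound degenerates — I would fix it by first sending $\delta\to0$ along the subsequence and only then letting $\ep\to0$), one concludes
$$
1\ge (1-\ep)^{1/2}\limsup_\delta\E[\partial_1\tilde v_\delta+1]+\ep^{1/2}\limsup_\delta\E[|\hat D\tilde v_\delta|]=(1-\ep)^{1/2}+\ep^{1/2}\limsup_\delta\E[|\hat D\tilde v_\delta|].
$$
Letting $\ep\to0$ forces $\limsup_\delta\E[|\hat D\tilde v_\delta|]=0$, i.e.\ $\|\hat D\tilde v_\delta\|_1\to0$. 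To also get $\|(\partial_1\tilde v_\delta+1)_-\|_1\to0$, I would instead write $\partial_1\tilde v_\delta+1=(\partial_1\tilde v_\delta+1)_+-(\partial_1\tilde v_\delta+1)_-$ and note $|\partial_1\tilde v_\delta+1|=(\partial_1\tilde v_\delta+1)+2(\partial_1\tilde v_\delta+1)_-$, so \eqref{plan:cor1} combined with $\E[\partial_1\tilde v_\delta+1]\to1$ gives $2\limsup_\delta\E[(\partial_1\tilde v_\delta+1)_-]+1\le1$, hence $\|(\partial_1\tilde v_\delta+1)_-\|_1\to0$. (This last step actually doesn't even need the $\ep$-splitting, just \eqref{plan:cor1} and the identity.)

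The main obstacle I anticipate is the justification that $\E[D\tilde v_\delta]=0$ and that the boundary terms in the averaging are genuinely $O(R^{N-1})$ uniformly in $\delta$ — equivalently, that $\tilde v_\delta$ has oscillation on a fixed ball bounded independently of $R$ (which it does, being stationary) but one must also ensure the constants don't blow up with $\delta$ through $\eta(\delta)$. The cleanest route is: for fixed $\delta$ the function $\tilde v_\delta$ is genuinely Lipschitz and stationary, so $\E[D\tilde v_\delta]=0$ and the boundary term is $O(R^{N-1})$ with a $\delta$-dependent constant that disappears after dividing by $R^N$ and sending $R\to\infty$ \emph{first}; only afterwards does $\delta\to0$ enter, and there the only inputs are \eqref{plan:cor1}, $\gamma_\delta\to0$, $\delta\tilde v_\delta\to1$ in $L^1$, and $\E[\partial_1\tilde v_\delta]\to0$. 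The last of these is the delicate point and I would handle it by the same integrate-over-$B(0,R)$-and-divide argument applied to the linear functional $p\mapsto\partial_1\tilde v_\delta$, whose spatial average over $B(0,R)$ is $O(R^{-1})$ by the divergence theorem since $\tilde v_\delta$ is bounded on each ball — wait, that requires a uniform-in-$\delta$ bound on $\tilde v_\delta$ itself, which we do not have; instead I would use that $D\tilde v_\delta$ is stationary and integrable, so by the ergodic theorem $|B(0,R)|^{-1}\int_{B(0,R)}\partial_1\tilde v_\delta\to\E[\partial_1\tilde v_\delta]$, while also $|B(0,R)|^{-1}\int_{B(0,R)}\partial_1\tilde v_\delta=|B(0,R)|^{-1}\int_{\partial B(0,R)}\tilde v_\delta\,\nu_1=O(R^{-1}\,\mathrm{osc}_{B(0,R)}\tilde v_\delta)$, and $\mathrm{osc}_{B(0,R)}\tilde v_\delta\le CR\|D\tilde v_\delta\|_\infty$ which is $o(R)$ after dividing, giving $\E[\partial_1\tilde v_\delta]=0$ exactly for each $\delta$. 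That resolves it.
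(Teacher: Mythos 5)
Your argument follows the paper's proof of this lemma essentially step for step: average the subsolution inequality \eqref{ineq1} over $B_R$, use that $V$ is divergence free with $\E[V]=0$ to make the advection term vanish in expectation, split the Euclidean norm with \eqref{eq:in}, and exploit $\E[\partial_{x_1}\tilde v_\delta]=0$ together with $\delta\tilde v_\delta\to 1$ in $L^1(\Omega)$. The only (harmless) deviations are that you keep $\ep$ fixed, take $\limsup_\delta$, and only then send $\ep\to0$, where the paper chooses $\ep=\ep_\delta\to0$ along with $\delta$; and your derivation of the second assertion via $|a|=a+2a_-$ is the same convexity/positivity observation the paper uses.

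One sub-justification in your last paragraph is wrong as written, although the fact it aims at is true and the fix is immediate. You bound the boundary term by $R^{-1}\operatorname{osc}_{B_R}\tilde v_\delta$ and then invoke $\operatorname{osc}_{B_R}\tilde v_\delta\le CR\,\|D\tilde v_\delta\|_\infty$, calling this ``$o(R)$ after dividing''; it is only $O(R)$, so after dividing by $R$ you get $O(\|D\tilde v_\delta\|_\infty)$, which does not tend to $0$ as $R\to\infty$. The route you discarded is the correct one: for each \emph{fixed} $\delta$ the function $v_\delta$ is bounded (the paper records this when introducing \eqref{eq:deltavdelta}; by comparison $\|v_\delta\|\le M/\delta$) and the sup-convolution preserves the $L^\infty$ bound, so $\operatorname{osc}_{B_R}\tilde v_\delta=O(1)$ in $R$ with a $\delta$-dependent constant. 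Since the limit $R\to\infty$ is taken at fixed $\delta$, no uniformity in $\delta$ is needed, and you get $\E[\partial_{x_1}\tilde v_\delta]=0$ exactly for each $\delta$, which is all the rest of your argument requires.
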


\begin{lem}\label{lem:dx1v+1} There exists a set of full measure $\Omega_{0}\subseteq\Omega$ such that, for all $\omega\in \Omega_{0}$,\\  $\partial_{x_1}\tilde v_\delta(0,\omega)+1\geq 0$ for $\delta>0$ sufficiently small.
\end{lem}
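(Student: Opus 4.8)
The aim is to upgrade the $L^1$ information of Lemma~\ref{lem:1} into a pointwise sign statement at $x=0$, using the subsolution inequality \eqref{ineq2} and the convergence of $\delta\tilde v_\delta$ to $\overline H(e_1)=1$; this is the rigorous counterpart of the step ``$\partial_{x_1}w+1\ge 0$ a.s.'' in the formal argument.

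First I would translate everything into statements at the single point $x=0$. Since $\tilde v_\delta$ and $D\tilde v_\delta$ are stationary, Lemma~\ref{lem:1} says $\E\bigl[|\hat D\tilde v_\delta(0,\cdot)|\bigr]\to 0$ and $\E\bigl[(\partial_{x_1}\tilde v_\delta(0,\cdot)+1)_-\bigr]\to 0$ as $\delta\to 0$. Moreover $\delta\tilde v_\delta(0,\cdot)\to 1$ in $L^1(\Omega)$: indeed $\delta v_\delta(0,\omega)\to\overline H(e_1)=1$ a.s.\ by the preceding Lemma, the $\delta v_\delta$'s are uniformly bounded (by comparison), so $\|\delta v_\delta-1\|_1\to 0$ by dominated convergence, and $\|\delta\tilde v_\delta-\delta v_\delta\|_1\le\|\delta v_\delta-1\|_1$ by the choice of $\eta(\delta)$. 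Passing once and for all to a subsequence (legitimate, since, as in the proof of the theorem, we work along subsequences), these three $L^1$-convergences become a.s.\ convergences. Let $\Omega_0$ be the full-measure set on which these a.s.\ limits hold and on which \eqref{ineq1}--\eqref{ineq2} are valid at $x=0$ for every $\delta$ of the subsequence (the latter being the Fubini choice of $x_0=0$, intersected over the countably many $\delta$).

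Then I would fix $\omega\in\Omega_0$ and argue by contradiction. Suppose $q^\delta_1:=\partial_{x_1}\tilde v_\delta(0,\omega)+1<0$ for infinitely many $\delta$ in the subsequence. Along those $\delta$ one has $(q^\delta_1)_-=|q^\delta_1|\to 0$, hence $q^\delta_1\to 0$; together with $\hat D\tilde v_\delta(0,\omega)\to 0$ this forces $q^\delta:=D\tilde v_\delta(0,\omega)+e_1\to 0$. Feeding this into the subsolution inequality \eqref{ineq2} at $x=0$,
\[
\delta\tilde v_\delta(0,\omega)\ \le\ (1+\gamma_\delta)\,|q^\delta|+\langle q^\delta,V(0,\omega)\rangle\ \le\ (1+\gamma_\delta+\|V\|)\,|q^\delta|\ \longrightarrow\ 0,
\]
which contradicts $\delta\tilde v_\delta(0,\omega)\to 1$. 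Hence $q^\delta_1\ge 0$ for all but finitely many $\delta$, i.e.\ for $\delta$ sufficiently small, which is the assertion.

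No genuine analytic difficulty is expected here: the content is the simple observation that a negative value of $\partial_{x_1}\tilde v_\delta(0,\omega)+1$ must, by Lemma~\ref{lem:1}, be \emph{small}, so that the entire vector $D\tilde v_\delta(0,\omega)+e_1$ is small, whereupon \eqref{ineq2} makes $\delta\tilde v_\delta(0,\omega)$ small --- impossible when $\overline H(e_1)=1$. The only care needed, and the point I would call the ``main obstacle'', is purely bookkeeping: performing the subsequence extraction simultaneously for the three quantities above, and handling uniformly over the (countably many) $\delta$ of the subsequence the measure-zero exceptional $\omega$-sets produced by the Fubini reduction that places $x_0=0$.
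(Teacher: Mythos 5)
Your proof is correct and is essentially the same as the paper's: both pass to a subsequence where the $L^1$ convergences of Lemma~\ref{lem:1} (and of $\delta\tilde v_\delta$ to $1$) become a.s.\ convergences, observe that a negative $\partial_{x_1}\tilde v_\delta(0,\omega)+1$ must tend to $0$, and then feed $D\tilde v_\delta(0,\omega)+e_1\to 0$ into \eqref{ineq2} to obtain the contradiction $1\le 0$. You are merely more explicit about the bookkeeping (the $L^1$-to-a.s.\ upgrade, the Fubini choice of $x_0=0$, intersecting over countably many $\delta$) that the paper leaves tacit.
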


\begin{lem}\label{V1geq-1} For all $\omega\in\Omega_{0}$ given in the previous lemma, $V_1(0,\omega)\geq -1$
\end{lem}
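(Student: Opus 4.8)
The plan is to deduce the lemma by a contradiction argument at the single point $x=0$, feeding the conclusions of Lemmata~\ref{lem:1} and~\ref{lem:dx1v+1} into the pointwise inequality \eqref{ineq2} (which, by the standing reduction in the proof of Theorem~\ref{theo:hen1}, holds at $x=0$ and a.s.\ in $\omega$). First I would fix $\omega\in\Omega_{0}$ and, following the paper's convention, shrink $\Omega_{0}$ to a full-measure set on which, in addition: (i) $\delta v_\delta(0,\omega)\to\overline H(e_1)=1$, by the convergence lemma for $\delta v^{e_1}_\delta$ stated just above; and (ii) $|\hat D\tilde v_\delta(0,\omega)|\to 0$ along the subsequence $\delta\to 0$ we are working with, the latter because Lemma~\ref{lem:1} gives $\|\hat D\tilde v_\delta\|_1\to0$ and $L^1(\Omega)$-convergence yields a.s.\ convergence along a subsequence. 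Since the sup-convolution obeys $\tilde v_\delta\ge v_\delta$, this already forces $\liminf_{\delta\to0}\delta\tilde v_\delta(0,\omega)\ge 1>0$.

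Next I would suppose, for contradiction, that $V_1(0,\omega)<-1$, say $V_1(0,\omega)\le -1-c$ for some $c>0$. Write $a_\delta=\partial_{x_1}\tilde v_\delta(0,\omega)+1$ and $b_\delta=\hat D\tilde v_\delta(0,\omega)$, so that $D\tilde v_\delta(0,\omega)+e_1=(a_\delta,b_\delta)$; by Lemma~\ref{lem:dx1v+1}, $a_\delta\ge0$ for all $\delta$ small (depending only on the now-frozen $\omega$), while $|b_\delta|\to0$. Evaluating \eqref{ineq2} at $x=0$ and using $|D\tilde v_\delta(0,\omega)+e_1|=(a_\delta^2+|b_\delta|^2)^{1/2}\le a_\delta+|b_\delta|$ together with $\lg D\tilde v_\delta(0,\omega)+e_1,\,V(0,\omega)\rg\le a_\delta V_1(0,\omega)+|b_\delta|\,\|V\|$ gives
\[
\delta\tilde v_\delta(0,\omega)\ \le\ a_\delta\bigl(1+\gamma_\delta+V_1(0,\omega)\bigr)+|b_\delta|\bigl(1+\gamma_\delta+\|V\|\bigr).
\]
For $\delta$ small enough that $\gamma_\delta<c$ one has $1+\gamma_\delta+V_1(0,\omega)\le\gamma_\delta-c<0$, so the nonnegativity of $a_\delta$ makes the first term on the right $\le0$; hence $\delta\tilde v_\delta(0,\omega)\le|b_\delta|\bigl(1+\gamma_\delta+\|V\|\bigr)\to0$, contradicting $\liminf_{\delta\to0}\delta\tilde v_\delta(0,\omega)\ge1$. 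Therefore $V_1(0,\omega)\ge-1$ for every $\omega$ in the (shrunk) $\Omega_{0}$.

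I do not expect a genuine obstacle in this lemma: all the substance sits in Lemmata~\ref{lem:1} and~\ref{lem:dx1v+1}, and what remains is the one-point arithmetic above. The only care needed is the measure-theoretic bookkeeping, namely passing from the $L^1$ bound of Lemma~\ref{lem:1} to pointwise smallness of $\hat D\tilde v_\delta(0,\omega)$ along a subsequence, and noting that the $\delta$-threshold supplied by Lemma~\ref{lem:dx1v+1} may legitimately depend on $\omega$ since $\omega$ is frozen before $\delta\to0$.
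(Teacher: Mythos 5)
Your proof is correct and follows essentially the same route as the paper: fix $\omega$, use Lemma~\ref{lem:dx1v+1} for $\partial_{x_1}\tilde v_\delta(0,\omega)+1\ge 0$, plug into \eqref{ineq2} at $(0,\omega)$, use the $L^1$-to-a.s.-along-a-subsequence convergence from Lemma~\ref{lem:1} plus $\delta v_\delta(0,\omega)\to 1$, and contradict. The only (minor, and arguably cleaner) differences are that you drop the nonpositive term $a_\delta(1+\gamma_\delta+V_1)$ directly rather than dividing by the negative coefficient as the paper does, and you use $\tilde v_\delta\ge v_\delta$ to get the $\liminf$ lower bound $\liminf_\delta\delta\tilde v_\delta(0,\omega)\ge 1$ instead of tracking pointwise convergence of $\delta\tilde v_\delta(0,\omega)$ to $1$.
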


Let
$$
E_\delta =\{\omega\in \Omega_{0}\; : \; \partial_{x_1}\tilde v_\delta(0,\omega)+1\geq 0\}\;,
$$
and note that Lemma \ref{lem:dx1v+1} yields that, as $\delta\to 0$, $\P[E_\delta]\to 1$.

Since  (\ref{ineq2}) implies that, for any $\sigma>0$ and a.s. in $\omega$,
$$
\delta \tilde v_\delta \leq  (1+\gamma_{\delta}+\sigma ) \left|\partial_{x_1} \tilde v_{\delta} +1\right|+ (1+\gamma_{\delta})\left| \hat D\tilde v_{\delta} \right| + \lg D\tilde v_{\delta} +e_1, V\rg\;,
$$
on $E_\delta$ we have
$$
\delta \tilde v_\delta \leq  (1+\gamma_{\delta} +\sigma) \left( \partial_{x_1} \tilde v_{\delta} +1\right)
+ |\hat D \tilde v_\delta| (1+\gamma_\delta+ \|V\|_\infty) + \left( \partial_{x_1}\tilde v_{\delta} +1\right) V_1.
$$

In view of Lemma \ref{V1geq-1} we know that $1+V_1\geq 0$. Hence, on $E_\delta$,
$$
(1+\gamma_{\delta} +\sigma+V_1)^{-1}(\tilde v_\delta -|\hat D \tilde v_\delta| (1+\gamma_\delta+ \|V\|_\infty)-(1+\gamma_{\delta} +\sigma)-V_1)
\leq \partial_{x_1} \tilde v_{\delta}.
$$

Integrating over $\omega$ this last  inequality we get
$$
\E[(1+\gamma_{\delta} +\sigma+V_1)^{-1}(\tilde v_\delta -|\hat D \tilde v_\delta| (1+\gamma_\delta+ \|V\|_\infty)-(1+\gamma_{\delta} +\sigma)-V_1){\bf 1}_{E_\delta}]\leq \E\left[ \partial_{x_1} \tilde v_{\delta}{\bf 1}_{E_\delta}\right]\;.
$$

Note that, since
$$
0=\E[ \partial_{x_1} \tilde v_{\delta}]= \E[ \partial_{x_1} \tilde v_{\delta}{\bf 1}_{E_\delta}]+ \E[(\partial_{x_1} \tilde v_{\delta}+1){\bf 1}_{E_\delta^c}]
-\P[E_\delta],
$$
and, as $\delta\to0$,  
$$
\P[E_\delta]\to 0 \ \text{ and } \ \E[(\partial_{x_1} \tilde v_{\delta}+1){\bf 1}_{E_\delta^c}]=
-\E[(\partial_{x_1} \tilde v_{\delta}+1)_-]\to 0, 
$$
always as $\delta\to0$, we have
 $$\E[ \partial_{x_1} \tilde v_{\delta}{\bf 1}_{E_\delta}]\to0.$$

Observe also that, since $1+V_1+\sigma \geq \sigma>0$, as $\delta\to0$,
$$
\E[-(1+\gamma_{\delta} +\sigma+V_1)^{-1}(\gamma_{\delta} +\sigma +V_1){\bf 1}_{E_\delta}] \to \E[-(1+\sigma+V_1)^{-1}(\sigma +V_1)]
$$
while $\|\hat D \tilde v_\delta\|_{L^1(\Omega)}\to 0$ and $\|\delta \tilde v_\delta\|_{L^1(\Omega)}\to 1$, as $\delta\to0$, imply
$$
\E\left[(1+\gamma_{\delta} +\sigma+V_1)^{-1}(\delta \tilde v_\delta-|\hat D \tilde v_\delta| (1+\gamma_\delta+ \|V\|_\infty)-1)
{\bf 1}_{E_\delta}\right] \to 0 .
 $$

Combining all the above we find that
$$
\E\left[(1+\sigma+V_1)^{-1}(\sigma + V_1) \right]\geq 0\;.
$$

Next for $s>-1$, let $\Phi(s)= -\frac{s}{1+s}=-1+ \frac{1}{1+s}$.  The monotone convergence theorem implies that, as $\sigma\to 0$,
$$
\E\left[\Phi(V_1)\right]=\E\left[\frac{-V_1}{1+V_1} \right] \leq 0\;.
$$

Since $\Phi$ is convex, using Jensen's inequality and the facts that $\E[V_1]=0$ and $\Phi(0)=0$, we get
$$
0=  \Phi( \E[V_1]) \leq \E[\Phi( V_1)]\leq 0\;.
$$

But $\Phi$ is actually strictly convex, so that  the equality  $\Phi( \E[V_1])=  \E[\Phi( V_1)]$ implies that $V_1$ must be constant, and, therefore,  $0$.
\QED

We return to the proofs of the three lemmata used in the course of the proof of Theorem~\ref{theo:hen1} and we begin with the

\noindent {\bf Proof of Lemma~\ref{lem:1}: }
Using in \eqref{eq:in} 
we find that, a.s. in $\omega$ and a.e. in $\R^N$,
$$
\delta \tilde v_\delta \geq  (1-\gamma_{\delta} )(1-\ep)^{\frac12} \left| \partial_{x_1} \tilde v_{\delta} +1\right| +
 (1-\gamma_{\delta} )\ep^{\frac12} \left|\hat  D\tilde v_{\delta} \right|
+ \lg D\tilde v_{\delta} +e_1, V\rg . 
$$

Since $V$ is divergence free and has mean zero, averaging over balls $B_R$ and letting $R\to\infty$, we obtain, using the ergodic theorem,
$$
\E[ \lg D\tilde v_{\delta} +e_1, V\rg]= -\E[\tilde v_\delta {\rm div}(V)]+\lg e_1, \E[V]\rg = 0\;.
$$

Recalling that, as $\delta\to$, $\delta \tilde v_\delta\to 1$ in $L^1(\Omega)$ and, in addition, $\E\left[\partial_{x_1} \tilde v_{\delta}\right] =0$,
we get
$$
\begin{array}{rl}
1+ o(1) \;  \geq  & \ds  (1-\gamma_{\delta} )(1-\ep)^{\frac12} \E[| \partial_{x_1} \tilde v_{\delta} +1|]+
 (1-\gamma_{\delta} )\ep^{\frac12} \E[|\hat  D\tilde v_{\delta} ] \\
 & \geq  \ds  (1-\gamma_{\delta} )(1-\ep)^{\frac12} | \E[\partial_{x_1} \tilde v_{\delta}] +1| +
 (1-\gamma_{\delta} )\ep^{\frac12} \E[|\hat  D\tilde v_{\delta}|] \\
 & \geq  \ds  (1-\gamma_{\delta} )(1-\ep)^{\frac12} +
 (1-\gamma_{\delta} )\ep^{\frac12} \E[|\hat  D\tilde v_{\delta}|].
 \end{array}
$$

Choosing $\ep_\delta>0$ such that, as $\delta\to0$,
$$
((1-\gamma_{\delta} )\ep_\delta^{\frac12})^{-1}(1+o(1)-(1-\gamma_{\delta} )(1-\ep_\delta)^{\frac12}) \to 0,
$$
for example, let  $\ep_\delta = |\theta_\delta|^{\frac12}+\gamma_\delta^{\frac12}$ with $\theta_\delta\to0$,  we get, that, as $\delta\to0$,
$\E[|\hat  D\tilde v_{\delta}|]\to 0.$

Using in (\ref{ineq1}) that $s\to s_+$ is convex and $ \E\left[\partial_{x_1} \tilde v_{\delta} \right] =0$ we obtain, for some $o(1)\to0$ as $\delta\to0$,
$$
\begin{array}{rl}
1+ o(1) \;  \geq  & \ds  (1-\gamma_{\delta} )\E[| \partial_{x_1} \tilde v_{\delta} +1|]\\
\geq & (1-\gamma_{\delta} )(\E[(\partial_{x_1} \tilde v_{\delta} +1)_-]
+\E[( \partial_{x_1} \tilde v_{\delta} +1)_-])\\
\geq  & (1-\gamma_{\delta} )( 1+\E[( \partial_{x_1} \tilde v_{\delta} +1)_-]).
\end{array}
$$

Hence, as $\delta\to0$,
$$
\E\left[\left( \partial_{x_1} \tilde v_{\delta} +1\right)_- \right] \leq (1-\gamma_{\delta})^{-1}(\gamma_\delta+ o(1))\to 0 .
$$
\QED

We continue with the

\noindent {\bf Proof of Lemma~\ref{lem:dx1v+1}: }
In view of Lemma~\ref{lem:1} there exits $\Omega_{0}\subseteq\Omega$ of full measure and a subsequence, which for notational simplicity we still denote with $\delta$, such that, a.s. in $\Omega_0$, $\hat D \tilde v_\delta(0,\cdot)\to 0$ and $\left(\partial_{x_1} \tilde v_\delta(0,\cdot)+1\right)_-\to 0$ in $\Omega_{0}$.

Fix $\omega\in \Omega_{0}$. If, up to a subsequence, $\partial_{x_1}\tilde v_\delta(0,\omega)+1<0$, then
  $\partial_{x_1}\tilde v_\delta(0,\omega)+1\to 0$. Since $\hat D \tilde v_\delta(0,\omega)\to 0$, letting $\delta\to 0$ in (\ref{ineq2}) implies
$ 1\leq 0\;,$ an obvious contradiction.
\QED

We conclude with the

\noindent {\bf Proof of Lemma~\ref{V1geq-1}: }
Fix $\omega\in \Omega_{0}$. Lemma~\ref{lem:dx1v+1} implies that, for $\delta>0$ sufficiently small,  $\partial_{x_1}\tilde v_\delta(0,\omega)+1\geq 0$.
Then, (\ref{ineq2}) evaluated at $(0,\omega)$ gives
$$
\delta \tilde v_\delta \leq  (1+\gamma_{\delta} ) \left( \partial_{x_1} \tilde v_{\delta} +1\right)
+ |\hat D \tilde v_\delta| (1+\gamma_\delta+ \|V\|_\infty) + \left( \partial_{x_1}\tilde v_{\delta} +1\right) V_1
$$

Assume that $V_1(0,\omega)+1<0$. Then,  for $\delta$ small enough, we find
$$
(1+\gamma_{\delta}+V_1(0,\omega))^{-1}(\delta \tilde v_\delta(0,\omega) - |\hat D \tilde v_\delta(0,\omega)| (1+\gamma_\delta+ \|V\|_\infty) )\geq \partial_{x_1} \tilde v_{\delta}(0,\omega) +1\;.
$$

The left-hand side in the above inequality converges, as $\delta\to0$, to $(1+V_1(0,\omega))^{-1} <0$,  while the right-hand side is nonnegative, a contradiction.
\QED

\section{Appendix: A simple proof of the homogenization when $N=2$}\label{sec:N=2}

We present here a shorter and simpler argument of the homogenization result when $N=2$. Here again it will be convenient to work under the additional assumption that $V$ has mean zero, which, of course, can be removed  as in Section \ref{sec:genN}. The simpler proof relies on two facts. The first, which holds in any dimension, is that, for each $a\in B$, $r^{-1}\theta(0, rp,\omega)$ has a.s a limit, as $r\to+\infty$, for any direction $p$ belonging to the essential support of
the random variable $Z^a=\E[ V+a\;|\; {\mathcal F}^a]$, where ${\mathcal F}^a$ is the invariant sets for the measure preserving transformations $(T^a_{t})$ (see the proof of Lemma \ref{lem:B01subsetE-}).
This is especially useful when $N=2$. Indeed the second fact is that, on the plane and for any $a\in B$, the support of $Z^a$ is contained in $\R a$. Hence, since $\E[Z^a]=a$, the support must contain $\lambda a$ for some $\lambda\geq 1$. Combining these two facts one can show that the limit $r^{-1}\theta(0, rp,\omega)$  exists for any $p$. \\

As we already pointed out in the proof of Lemma \ref{lem:B01subsetE-}, for any fixed vector $a\in B$ and a.s. in $\omega$,
$\lim_{t\to\infty}t^{-1}X_t^{0,0,a,\omega}=Z^a=\E[ V+a\;|\; {\mathcal F}^a]$.  

Let $W^a$ denote the essential support of $Z^a$, i.e.,
$$
W^a=\left\{ w\in \R^N : \P[\{\omega: |Z^a(\omega)-w|<\ep\}]>0 \ \text{ for all $\ep>0$} \right\}\;.
$$
Note that, since $\E[Z^a]=\E[\E[V+a|{\mathcal F}^a]]=a$, $W^a$ must contain some $w$ such that $\lg w, a\rg \geq 1$.

We have:

\begin{lem} \label{CvForWalpha}
For any $w\in W^a$,
$
\overline q(w)= \lim_{t\to+\infty} t^{-1}\theta(0,tw,\omega)
$
exists a.s. in $\omega$ and in all dimensions.
\end{lem}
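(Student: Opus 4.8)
The goal is to show that for $w$ in the essential support $W^a$ of $Z^a = \E[V+a \mid \mathcal F^a]$, the limit $\overline q(w) = \lim_{t\to+\infty} t^{-1}\theta(0,tw,\omega)$ exists almost surely. The strategy is to combine the ergodic-theorem information about the trajectory $X_t^{0,0,a,\omega}$ with the controllability estimate of Theorem~\ref{reach}, in the same spirit as the proof of Lemma~\ref{lem:timeconst}, but now exploiting that the long-time average of the (deterministic-control) trajectory is genuinely close to $w$ on a positive-probability event rather than on all of $\Omega$.

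First I would set up the conditional ergodic statement precisely. From the proof of Lemma~\ref{lem:B01subsetE-} we have, a.s. in $\omega$, $\lim_{t\to\pm\infty} t^{-1} X_t^{0,0,a,\omega} = Z^a(\omega) = \E[V+a \mid \mathcal F^a]$. Fix $w\in W^a$ and $\ep>0$; by definition of the essential support the event $A_\ep = \{\omega : |Z^a(\omega) - w| < \ep\}$ has positive probability. Next, introduce the sub-additive quantity $\mathcal Z(s,t,\omega) = \theta(X_s^{0,0,a,\omega}, X_t^{0,0,a,\omega}, \omega)$ for $s\le t$; it is bounded by $t-s$, and (using that $V$ is divergence free, as noted in Section~\ref{sec:review}) it is stationary with respect to the measure-preserving group $(T^a_{s,t})$ from Proposition~\ref{prop:Talpha}. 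Applying Kingman's sub-additive ergodic theorem along this group yields a $(T^a)$-invariant — hence $\mathcal F^a$-measurable — random variable $\Gamma^a(\omega)$ such that $t^{-1}\theta(0, X_t^{0,0,a,\omega}, \omega) \to \Gamma^a(\omega)$ a.s. as $t\to+\infty$ (the $s=0$ case; note $X_0^{0,0,a,\omega}=0$).

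Then I would transfer this to $\theta(0,tw,\omega)$ using the reachability estimate. On the event $A_\ep$ the trajectory satisfies $|X_t^{0,0,a,\omega} - tw| \le \ep t$ for all $t$ large, so Theorem~\ref{reach} gives, for any $\ep'>0$ and a.s.\ $\omega$,
$$
\theta(0,tw,\omega) \le \theta(0,X_t^{0,0,a,\omega},\omega) + \theta(X_t^{0,0,a,\omega}, tw,\omega) \le \theta(0,X_t^{0,0,a,\omega},\omega) + T(\omega,\ep') + \ep' t\|V+a\| + (1+\ep')\ep t,
$$
and symmetrically a lower bound with $\theta(0,tw,\omega)$ and $\theta(0,X_t^{0,0,a,\omega},\omega)$ interchanged. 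Dividing by $t$, letting $t\to+\infty$ and then $\ep,\ep'\to 0$ shows that, on $A_\ep$ (and hence, letting $\ep\to 0$ through a countable sequence, on a set of positive probability), $t^{-1}\theta(0,tw,\omega)$ converges to $\Gamma^a(\omega)$. Finally, invoking Lemma~\ref{limsupliminf}, the $\limsup$ and $\liminf$ of $t^{-1}\theta(0,tw,\omega)$ are a.s.\ constant; since they agree on a positive-probability event, they agree a.s., so the limit exists a.s.\ and is deterministic — call it $\overline q(w)$, consistent with the notation of Lemma~\ref{lem:timeconst} by positive homogeneity.

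The main obstacle I anticipate is the careful verification that $\mathcal Z(s,t,\omega)$ is indeed stationary and sub-additive with respect to $(T^a_{s,t})$ in the right sense to invoke the sub-additive ergodic theorem, and that the resulting limit $\Gamma^a$ is $\mathcal F^a$-measurable so that the positive-probability event $A_\ep$ is compatible with it — this is the same subtle bookkeeping already handled for $\Gamma$ in Lemma~\ref{lem:Gamma}, adapted to the autonomous deterministic-control setting. The rest (the reachability sandwich and the appeal to Lemma~\ref{limsupliminf}) is routine once that is in place.
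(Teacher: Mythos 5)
Your proposal follows essentially the same route as the paper: both introduce the sub-additive, bounded, stationary quantity $\theta(X^{0,0,a,\omega}_s,X^{0,0,a,\omega}_t,\omega)$, apply Kingman's theorem to obtain the random limit $\Gamma^a$, and then use the reachability estimate of Theorem~\ref{reach} on the positive-probability event $\{|Z^a-w|<\ep\}$ to sandwich $t^{-1}\theta(0,tw,\omega)$ between $\Gamma^a\pm O(\ep)$, concluding via Lemma~\ref{limsupliminf}. The only cosmetic difference is that you state the appeal to Lemma~\ref{limsupliminf} explicitly (the paper leaves it implicit), and your intermediate claim that $t^{-1}\theta(0,tw,\omega)$ ``converges to $\Gamma^a(\omega)$'' on $A_\ep$ is slightly imprecise---on $A_\ep$ one only gets $\limsup$ and $\liminf$ within $O(\ep)$ of each other, which is all that is needed once Lemma~\ref{limsupliminf} forces them to be deterministic.
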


\noindent {\bf Proof : }
If
$
T^a(s,t,\omega)=\theta(X^{0,0,a,\omega}_s, X^{0,0,a,\omega}_t,\omega),
$
we claim that there exists $\Gamma^a:\Omega\to\R$ such that, a.s. in $\omega$ and as $t\to\infty$,
\be\label{1/tT}
t^{-1} T^a(0,t,\omega) \to \Gamma^a(\omega). 
\ee

To prove (\ref{1/tT}) we first note that, for any $\sigma>0$, if $x_\sigma=x_\sigma(\omega)=X^{0,0,a,\omega}_\sigma$, then
$$
T^a(s+\sigma,t+\sigma,\omega) =
\theta (X^{x_\sigma,0,a,\omega}_s, X^{x_\sigma,0,a,\omega}_t,\omega).
$$

We also know that
$$
\theta (X^{x_\sigma,0,a,\omega}_s, X^{x_\sigma,0,a,\omega}_t,\omega)
=\inf\left\{r\geq 0\;:\; X^{x_\sigma,0,a,\omega}_t \in {\mathcal R}_r(X^{x_\sigma,0,a,\omega}_s, \omega)\right\},
$$
with
$$
X^{x_\sigma,0,a,\omega}_t= X^{0,0,a,\tau_{x_\sigma}\omega}_t+x_\sigma(\omega),
$$
while
$$
{\mathcal R}_r(X^{x_\sigma,0,a,\omega}_s, \omega) =
{\mathcal R}_r(X^{0,0,a,\tau_{x_\sigma}\omega}_s, \tau_{x_\sigma}\omega)+x_\sigma(\omega).
$$

Hence,
$$
\theta (X^{x_\sigma,0,a,\omega}_s, X^{x_\sigma,0,a,\omega}_t,\omega)
=
\theta (X^{0,0,a,\tau_{x_\sigma}\omega}_s, X^{0,0,a,\tau_{x_\sigma}\omega}_t,\tau_{x_\sigma}\omega).
$$

Since the map $\omega\to \tau_{x_\sigma(\omega)}\omega$ is measure preserving, the process $T^a$ is stationary. It is also clearly sub-additive and, in addition, for $0<s<t$, we have
$$
T^a(s,t,\omega)\leq t-s\;.
$$

Then \eqref{1/tT} follows immediately from the
sub-additive ergodic theorem.


Fix  $w\in W^a$ and $\ep>0$. The  definition of $W^a$ yields that the set
$S_\ep=\left\{\omega\in \Omega\;:\;  |Z^a(\omega)-w|<\ep\right\}$ has a positive measure. Fix $\omega\in S_\ep$
such that, $\lim_{t\to\infty}{t}^{-1} X^{0,0,a,\omega}_t\to Z^a(\omega)$ and  let $T(\omega,\ep)\in(0,\infty)>0$  be given by
Theorem \ref{reach}.
Then
$$
\begin{array}{rl}
\theta(0, tw,\omega)\leq & \theta (0, X^{0,0,a,\omega}_t,\omega)+
\theta (X^{0,0,a,\omega}_t,tw,\omega)\\[2mm]
\leq &  T^a(0,t, \omega)
+T(\omega)+\ep|X^{0,0,a,\omega}_t|+ (1+\ep)|X^{0,0,a,\omega}_t-tw|,
\end{array}
$$
and, thus,
$$
\limsup_{t\to+\infty}\frac{1}{t}\theta(0, tw,\omega)
\leq \limsup_{t\to+\infty}\frac{1}{t}  T^a(0,t,\omega)+
M\ep+ (1+\ep)\ep
\leq Z^a(\omega)+(1+\ep)\ep\;. $$

Similarly, we find
$$
\liminf_{t\to+\infty}\frac{1}{t}\theta(0, tw,\omega)
\geq  Z^a(\omega)-(1+\ep)\ep\;.
$$

Hence,
 $$
 \limsup_{t\to+\infty}\frac{1}{t}  \theta(0, tw,\omega) -\liminf_{t\to+\infty}\frac{1}{t}  \theta(0, tw,\omega)  \leq 2(1+\ep)\ep\;,
 $$
which shows the equality between $\limsup$ and $\liminf$ since $\ep$ is arbitrary.
 \QED

When $N=2$, $W^a$ consists only of a  direction parallel to $a$. Indeed we have:

\begin{lem}\label{2D} If $N=2$, then $W^a\subset \R a$ and there exists $\lambda \geq 1$ such that
$\lambda a\in W^a$.
\end{lem}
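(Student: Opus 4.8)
The plan is to exploit the divergence-free structure of $V$ together with the two-dimensional topology. Recall that $Z^a=\E[V+a\mid{\mathcal F}^a]$ and that, a.s.\ in $\omega$, $\lim_{t\to\pm\infty}t^{-1}X_t^{0,0,a,\omega}=Z^a(\omega)$ (from the proof of Lemma~\ref{lem:B01subsetE-}, equations \eqref{ergo+}--\eqref{ergo-}). The key point is that the orbit $t\mapsto X_t^{0,0,a,\omega}$ of the autonomous flow $x'=V(x,\omega)+a$ cannot drift off to infinity in a direction transverse to $a$, because in $\R^2$ the divergence-free field $V+a-\langle\cdot\rangle$ has a stream function. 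First I would fix $w\in W^a$ with $\langle w,e\rangle\neq 0$ for some unit vector $e\perp a$ and derive a contradiction from the incompressibility. The idea: if $Z^a(\omega)$ has a nonzero component along $e$ on a set of positive $\P$-measure, then, by stationarity and ergodicity of $(\tau_x)$, the trajectory passes through translates of this set with positive frequency, and the total transverse displacement $\langle X_t^{0,0,a,\omega},e\rangle$ grows linearly in $t$. On the other hand, the $e_1$-component argument used in Proposition~\ref{pr:ex} shows one can bound transverse excursions using the perimeter/volume control of reachable sets, but here the cleaner route is a direct stream-function computation.

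More concretely, after subtracting the mean I may assume $\E[V]=0$, and I write $V+a=(b_1,b_2)$ with $\operatorname{div}(V)=0$. In $\R^2$, fix a realization $\omega$; since $V(\cdot,\omega)\in{\mathcal C}^{1,1}$ is divergence free, there is a stream function $\Psi(\cdot,\omega)\in{\mathcal C}^{2,1}$ with $V=(-\partial_2\Psi,\partial_1\Psi)$. Then $V+a=\nabla^\perp\Psi + a$, and the flow $x'=V(x,\omega)+a$ has the conserved-type quantity obtained by pairing with $a^\perp$: $\frac{d}{dt}\langle X_t,a^\perp\rangle=\langle V(X_t,\omega),a^\perp\rangle=-\partial_s\Psi(X_t)$ along the streamline direction, which one integrates. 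The cleanest statement: the component of $X_t^{0,0,a,\omega}$ orthogonal to $a$ equals $\langle X_t,a^\perp\rangle$, and its time derivative is $\langle V(X_t,\omega),a^\perp\rangle$, a stationary function of the point $X_t=\tau$-shifted $\omega$ with mean $\langle\E[V],a^\perp\rangle=0$. Applying the ergodic theorem along the measure-preserving autonomous flow $T^a_t$ (Proposition~\ref{prop:Talpha}), $t^{-1}\langle X_t^{0,0,a,\omega},a^\perp\rangle\to\langle\E[V+a\mid{\mathcal F}^a],a^\perp\rangle=\langle Z^a(\omega),a^\perp\rangle$, and this limit is also $\E[\langle V,a^\perp\rangle\mid{\mathcal F}^a]$. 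The two-dimensionality enters because $a^\perp$ is one-dimensional: I need to show this conditional expectation is a.s.\ zero, equivalently $\langle Z^a,a^\perp\rangle=0$ a.s. For this I argue that if not, then on a positive-measure invariant set $\langle Z^a,a^\perp\rangle>0$ say, which by invariance under $T^a_t$ forces the streamline $X_t$ to escape to $+\infty$ in the $a^\perp$ direction monotonically in average; but then the reverse-time limit \eqref{ergo-} equals the forward limit, giving the same sign, and one obtains that $\langle Z^a,a^\perp\rangle$ has a definite sign on its whole support--contradicting $\E[Z^a]=a\perp a^\perp$, i.e.\ $\E[\langle Z^a,a^\perp\rangle]=0$. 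Hence $\langle Z^a,a^\perp\rangle=0$ a.s., so $W^a\subset\R a$.

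For the second assertion, once $W^a\subset\R a$, write $Z^a(\omega)=\mu(\omega)\,a/|a|^2\cdot|a|$... more simply $Z^a(\omega)=\lambda(\omega)\,a$ with $\lambda$ a scalar random variable (taking $a\neq0$; the case $a=0$ is trivial since then the claim is vacuous or follows from $B\subset{\mathcal E}^-$). Since $\E[Z^a]=a$ we get $\E[\lambda]=1$, so $\lambda$ cannot be a.s.\ strictly less than $1$; hence $\P[\lambda\geq1]>0$, and therefore the essential support $W^a$ of $Z^a=\lambda a$ contains some point $\lambda_0 a$ with $\lambda_0\geq1$, which is the desired $\lambda\geq1$. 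The main obstacle I expect is making rigorous the step ``$\langle Z^a,a^\perp\rangle$ has constant sign on its support'': one must combine the equality of forward and backward ergodic limits \eqref{ergo-} with the fact that on each ergodic component of $T^a$ the average transverse velocity is a single number, and then patch the components together using that $\E$ of the whole thing is $0$; handling the possible non-ergodicity of $T^a$ (hence the conditioning on ${\mathcal F}^a$) carefully is where the care is needed, but it is exactly the same mechanism already used in Lemma~\ref{lem:B01subsetE-}.
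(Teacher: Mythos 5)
Your reduction of the statement to showing $\langle Z^a,a^\perp\rangle=\E[\langle V,a^\perp\rangle\mid\mathcal F^a]=0$ a.s.\ is fine (this is just \eqref{ergo+} projected onto $a^\perp$), and your derivation of the second assertion from $W^a\subset\R a$ and $\E[Z^a]=a$ is correct and is exactly what the paper does. The gap is in the key step. You propose to deduce that $\langle Z^a,a^\perp\rangle$ vanishes a.s.\ from the facts that its \emph{unconditional} mean is zero and that forward and backward limits coincide, via the claim that it ``has a definite sign on its whole support.'' Nothing you invoke supports that claim: the conditional expectation is constant on each $T^a$-invariant set, but it can perfectly well be $+c$ on one invariant set and $-c$ on another of equal measure, so that the total mean is zero while $W^a\not\subset\R a$. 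The coincidence of the $t\to+\infty$ and $t\to-\infty$ limits in \eqref{ergo-} is automatic and carries no sign information. A decisive sanity check: your argument never genuinely uses $N=2$ --- for any $N$ and any fixed $e\perp a$ one has $\E[\langle Z^a,e\rangle]=0$ by the same computation --- so if it were valid it would give $W^a\subset\R a$ in every dimension, which the paper explicitly states is false (this is precisely why Section~\ref{sec:genN} needs the random oscillating control). The stream function you introduce is never actually used, and in any case a stationary divergence-free field need not have a stationary or controlled stream function (that is the obstruction to the Nolen--Novikov approach discussed in Section~\ref{sec:review}).

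The missing two-dimensional input is topological, not variational. The paper's proof runs as follows: if $W^a$ contained two nonzero, non-collinear points $\bar z_1,\bar z_2$, pick $\omega$ for which the orbit $L=\{X^{0,0,a,\omega}_t:t\in\R\}$ has asymptotic direction $z_1$ near $\bar z_1$ and for which the set $E_2(\omega)$ of starting points whose orbits have asymptotic direction near $\bar z_2$ has positive density. Since the flow $x'=V(x,\omega)+a$ is autonomous, distinct orbits cannot cross; the curve $L$ separates $\R^2$ into two components, each contained (up to a sublinear error) in a half-plane bounded by $\R z_1$ as in \eqref{Pi+Pi-}. Any orbit starting in $E_2(\omega)$ is therefore trapped in one of these half-planes, yet its displacement in the direction $z_1^\perp$ grows linearly with a nonzero rate $\langle z_2,z_1^\perp\rangle$ --- a contradiction. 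This non-crossing/separation argument is what forces all asymptotic directions onto a single line; only after that does the identity $\E[Z^a]=a$ enter, to identify the line as $\R a$ and to produce $\lambda\geq 1$. You would need to supply this (or an equivalent planar separation argument) to close the proof.
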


We remark that the proof actually shows that there exists a random variable $\lambda$ such that
$$
\lim_{t\to \pm \infty} \frac{1}{t} X^{0,0,a,\omega}_t = \lambda(\omega) a \qquad {\rm a.s.}\;
\qquad {\rm with }\; \ \P[\lambda\geq 1]>0\;.
$$

Before we present the proof of this last lemma, we observe that
combining Lemma \ref{CvForWalpha} and Lemma \ref{2D} easily provides the existence of a time constant, exactly as in the proof of Theorem \ref{thm:TimeConstant}.

We have:

\begin{cor} If $N=2$, for any $a\in B$, $\lim_{t\to+\infty}t^{-1} \theta(0,ta,\omega)$ exists a.s. in $\omega$.
\end{cor}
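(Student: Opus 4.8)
The plan is to combine the two preceding lemmata in essentially the same way as the reachability estimate was used in the proof of Theorem~\ref{thm:TimeConstant}, but now exploiting the two-dimensional geometry so that no randomized control is needed. Fix $a\in B$; we may assume $a\neq 0$, the case $a=0$ being trivial since $\theta(0,0,\omega)=0$. By Lemma~\ref{2D} there is $\lambda\geq 1$ with $\lambda a\in W^a$, and by Lemma~\ref{CvForWalpha} the limit $\bar q(\lambda a)=\lim_{t\to+\infty}t^{-1}\theta(0,t\lambda a,\omega)$ exists a.s.\ in $\omega$. By positive homogeneity of $\theta$ in its target point (more precisely, by the identity $\theta(0, s(\lambda a),\omega)$ evaluated along $s=t/\lambda$), this gives that $t^{-1}\theta(0, t a,\omega)$ has an a.s.\ limit equal to $\lambda^{-1}\bar q(\lambda a)$.

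First I would record the elementary monotonicity/subadditivity facts about $\theta$: for all $x,y,z$ and $\omega$, $\theta(x,z,\omega)\leq \theta(x,y,\omega)+\theta(y,z,\omega)$, and $\theta(x+z,y+z,\omega)=\theta(x,y,\tau_z\omega)$, together with the reachability estimate of Theorem~\ref{reach}, which is available since \eqref{eq:mz} is assumed throughout the Appendix. Next, following the argument of Lemma~\ref{CvForWalpha} verbatim, I would use that for $\omega$ in the positive-measure set $S_\ep=\{|Z^a(\omega)-\lambda a|<\ep\}$ one has $t^{-1}X^{0,0,a,\omega}_t\to Z^a(\omega)$, so that $X^{0,0,a,\omega}_t$ tracks the ray $\R(\lambda a)$ up to error $\ep t$; splitting $\theta(0,ta,\omega)$ through the point $X^{0,0,a,\omega}_{\lambda t}$ and applying Theorem~\ref{reach} to bound $\theta(X^{0,0,a,\omega}_{\lambda t}, ta,\omega)$ shows that $\limsup$ and $\liminf$ of $t^{-1}\theta(0,ta,\omega)$ differ by at most $C\ep$ on $S_\ep$. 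Since $\ep>0$ is arbitrary, the limit exists a.s.\ on $S_\ep$ for each $\ep$, hence a.s.\ on $\bigcup_{\ep>0}S_\ep$; but the $\limsup$ and $\liminf$ are invariant quantities (they do not change under $\omega\mapsto\tau_x\omega$, by the translation identity for $\theta$ combined with Theorem~\ref{reach}, exactly as in Lemma~\ref{limsupliminf}), so by ergodicity the event ``the limit exists'' has probability $0$ or $1$; since it has positive probability it has probability $1$, and the limit is a.s.\ deterministic.

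The main obstacle is the same one that pervades the whole paper: the absence of an integral bound on $\theta$ means Lemma~\ref{CvForWalpha} only delivers a limit along the \emph{particular} directions $w\in W^a$, not along $a$ itself, and the passage from ``limit along $w\in W^a$'' to ``limit along $a$'' is precisely what forces the two-dimensionality of Lemma~\ref{2D} and then the ergodicity/invariance argument above. Concretely, I expect the delicate point to be verifying that the one-sided limits $\limsup_{t\to+\infty}t^{-1}\theta(0,ta,\cdot)$ and $\liminf_{t\to+\infty}t^{-1}\theta(0,ta,\cdot)$ are genuinely translation-invariant random variables despite only having the one-directional reachability bound \eqref{thetabound}; this is handled exactly as in the proof of Lemma~\ref{limsupliminf}, and once that is in place the corollary follows by the ergodicity dichotomy.
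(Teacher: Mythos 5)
Your first paragraph is exactly the paper's (unwritten) argument: Lemma~\ref{2D} supplies $\lambda\geq1$ with $\lambda a\in W^a$, Lemma~\ref{CvForWalpha} gives the a.s.\ limit of $t^{-1}\theta(0,t\lambda a,\omega)$, and the reparametrization $s=t/\lambda$ transfers it to direction $a$, matching the paper's remark that the two lemmata combine to give the time constant. The second and third paragraphs re-prove Lemma~\ref{CvForWalpha} and are redundant (and contain the slip $X^{0,0,a,\omega}_{\lambda t}$ where the relevant point is $X^{0,0,a,\omega}_{t/\lambda}$, since $t^{-1}X_t\to\lambda a$ means $X_{t/\lambda}\approx ta$); the clean version in your first paragraph already suffices.
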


We conclude with the

\noindent {\bf Proof of Lemma \ref{2D} : } First we prove that $W^a$ is contained in a single line. If not there must exist
$\bar z_1,\bar z_2\in W^a$ such that $\bar z_1\neq 0$, $\bar z_2\neq 0$ and, if
we set $\bar z_1^\perp:= J\bar z_1$, where $J=\left( \begin{array}{ll} -1 & 0\\0 & 1\end{array}\right)$,  then
$\lg \bar z_1^\perp, \bar z_2\rg\neq 0$. We choose
$\ep>0$ so small that $\lg z_1^\perp, z_2\rg\neq 0$ for any $z_1\in B(\bar z_1,\ep)$ and $z_2\in B(\bar z_2,\ep)$
and consider the set
$$
E_2= \{ \omega \in \Omega\;:\; | \lim_{t\to\pm\infty} \frac{1}{t} X^{0,0,a,\omega}_t-\bar z_2| \leq \ep \}\;.
$$

The definition of $W^a$ implies that $\P(E_2)>0$. As usual we set
$E_2(\omega)= \left\{ x\in \R^2 : \tau_x\omega\in E_2 \right\}$ and fix $\omega$ such that
$$
z_1= \lim_{t\to\pm\infty} \frac{1}{t} X_t^{0,0,a,\omega}
$$
exists and belongs to $B(\bar z_1,\ep)$ (hence $z_1$ is non zero) and, in addition,
$$
\lim_{R\to+\infty} \frac{ |Q_R\cap E_2(\omega)|}{|Q_R|}= \P\left[ E_2\right]>0\;.
$$

The curve
$L=\{X_t^{0,0,a,\omega} : t\in \R\}$ (recall $z_1\neq 0$) divides the plane into two connected components $\Pi^+$ and $\Pi^-$. Moreover, for any $\sigma>0$
small, there exists some $\delta>0$
such that
\be\label{Pi+Pi-}
\Pi^+\subset \left\{x\in \R^2\;:\; \lg x, z_1^\perp\rg \geq -\delta-\sigma|x|\right\}
\;{\rm and } \;
\Pi^-\subset \left\{x\in \R^2\;:\; \lg x, z_1^\perp\rg \leq \delta+\sigma|x|\right\}\;.
\ee

Choose $R>0$ sufficiently large so that $Q_R\cap E_2(\omega)\neq \emptyset$. If $x_2\in E_2(\omega)$, then, for some $z_2\in B(\bar z_2,\ep)$,
$$
\lim_{t\to\pm\infty} \frac{1}{t} X_t^{x_2,0,a,\omega}
=
\lim_{t\to\pm\infty} \frac{1}{t} \left( X_t^{0,0,a,\tau_{x_2}\omega}+x_2\right)
= z_2.
$$

Since $z_2\neq z_1$, the trajectory $X_t^{x_2,0,a,\omega}$ never crosses
the curve $L$ and therefore remains either in $\Pi^+$ or in $\Pi^-$. But then the fact that
$$
\lim_{t\to\pm\infty} \frac{1}{t} \lg X_t^{x_2,0,a,\omega}, z_1^\perp\rg
= \lg z_2, z_1^\perp\rg \neq 0
$$
contradicts (\ref{Pi+Pi-}) as soon as $\sigma$ is sufficiently small.

So we have proved that there is a direction $z\neq 0$ such that $W^a\subset \R z$, i.e., a.s. in $\omega$,
$\lim_{t\to\pm\infty} \frac{1}{t} X_t^{0,0,a,\omega} \in \R z$. Since, from (\ref{ergo+}),
$$
\E\left[ \lim_{t\to\pm\infty} \frac{1}{t} X_t^{0,0,a,\omega}\right] =\E[\E[ V+a\ |\ {\mathcal F}^a]]= a\;,
$$
we can choose $z=a$. This also implies the existence of some $\lambda\geq 1$ such that
$\lambda a\in W^a$.
\QED



\end{document}